\newcommand{\comment}[1]{}
\newcommand{\F}{\mathbf{F}}
\newcommand{\Q}{\mathbf{Q}}
\newcommand{\Z}{\mathbf{Z}}
\newcommand{\m}{\mathfrak{m}}
\newcommand{\na}{\mathfrak{n}}
\newcommand{\pa}{\mathfrak{p}}
\newcommand{\qa}{\mathfrak{q}}
\newcommand{\sep}{\mathrm{sep}}
\newcommand{\ins}{\mathrm{ins}}
\newcommand{\iv}{\mathrm{i}}
\newcommand{\hv}{\mathrm{h}}
\newcommand{\vv}{\mathrm{v}}
\newcommand{\ww}{\mathrm{w}}
\newcommand{\tv}{\mathrm{t}}
\newcommand{\sv}{\mathrm{s}}
\newcommand{\wv}{\mathrm{w}}
\newcommand{\kv}{\mathrm{k}}
\newcommand{\fod}{\mathbin{|\hspace{-0.07cm}\backslash}}
\DeclareMathOperator{\tr}{tr}
\DeclareMathOperator{\im}{im}
\DeclareMathOperator{\Gal}{Gal}
\DeclareMathOperator{\Hom}{Hom}
\DeclareMathOperator{\Aut}{Aut}
\DeclareMathOperator{\Spec}{Spec}
\DeclareMathOperator{\MaxSpec}{MaxSpec}
\DeclareMathOperator{\K}{K}
\DeclareMathOperator{\expo}{exp}
\DeclareMathOperator{\lcm}{lcm}
\DeclareMathOperator{\ord}{ord}
\DeclareMathOperator{\fv}{f}
\DeclareMathOperator{\ev}{e}
\DeclareMathOperator{\nv}{n}
\DeclareMathOperator{\dv}{d}
\DeclareMathOperator{\gv}{g}
\DeclareMathOperator{\pv}{p}
\DeclareMathOperator{\Dv}{D}
\DeclareMathOperator{\Iv}{I}
\DeclareMathOperator{\Vv}{V}
\DeclareMathOperator{\chart}{char}
\DeclareMathOperator{\trdeg}{trdeg}
\DeclareMathOperator{\cd}{cd}
\newcommand{\limplies}{\Longleftarrow}
\theoremstyle{plain}
\newtheorem{theorem}{Theorem}[section]
\newtheorem{corollary}[theorem]{Corollary}
\newtheorem{lemma}[theorem]{Lemma}
\newtheorem{proposition}[theorem]{Proposition}
\theoremstyle{definition}
\newtheorem{definition}[theorem]{Definition}
\newtheorem{remark}[theorem]{Remark}
\newtheorem{example}[theorem]{Example}
\begin{document}

\title{The algebraic theory of valued fields}
\author{Michiel Kosters}
\address{Mathematisch Instituut
P.O. Box 9512
2300 RA Leiden
the Netherlands}
\email{mkosters@math.leidenuniv.nl}
\urladdr{www.math.leidenuniv.nl/~mkosters}
\date{\today}
\thanks{This article is part of my PhD thesis written under the supervision of Hendrik Lenstra. I would like to congratulate him with his $65$th birthday and as a present give him the reference he subtly requested.}
\subjclass[2010]{13A18, 12J20, 12J10}

\begin{abstract}
In this exposition we discuss the theory of algebraic extensions of valued fields. Our approach is mostly through Galois theory. Most of the results are well-known, but some are new. No previous knowledge on the theory of valuations is needed.
\end{abstract}

\maketitle
\tableofcontents

\section{Introduction}

General valuation theory plays an important role in many areas in mathematics. Also in this thesis, we will quite often need valuation theory,
although for our applications the theory of discrete valuations suffices. There exist many books on valuation theory, such as \cite{END}, \cite{ENG},
\cite{KUH} and \cite{EFR}. They do not treat the case of algebraic extensions of valuations theory completely. Furthermore, definitions of certain
concepts are not uniform. This article is written to fill this gap in the literature and provide a useful reference, even when restricting to
the case of discrete valuations. Our definitions are motivated by our Galois theoretic approach. No previous knowledge on
the theory of valuations is needed and only a slight proficiency in commutative algebra suffices (see for example \cite{AT}
and \cite{LA}).

With this in mind, this article starts with definitions and the main statements. In the second part of this article we will provide complete proofs.
In the last part of this article we give examples of extensions with a defect.

Our treatment of valuation theory starts with normal extensions of valued fields. Later, by looking at group actions on fundamental sets, we prove
statements for algebraic extensions of valued fields. The beginning of our Galois-theoretic approach follows parts of \cite{END} and \cite{ENG},
although we prove that certain actions are transtive in a different way. The
upcoming book \cite{KUH} uses at certain points a very
similar approach.

Even though most of the statements in this article are known, there are a couple of new contributions.

\begin{itemize}
 \item We define when algebraic extensions of valued fields are \emph{immediate}, \emph{unramified}, \emph{tame}, \emph{local}, \emph{totally
ramified} or \emph{totally wild} (Definition \ref{1c9000}). The definitions are motivated by practicality coming from Galois theory.
We also study maximal respectively minimal extensions with these properties (Theorem \ref{1c803}).
 \item We compute several quantities, such as separable residue field degree extension, tame ramification index and more in finite algebraic
extensions of valued fields in terms of automorphism groups (Proposition \ref{1c854}). We will give necessary and
sufficient conditions for algebraic extensions of valued fields to be immediate, unramified, \ldots in terms of automorphism groups and
fundamental sets (Theorem \ref{1c802}). 
Current literature only seems to handle the Galois case.
\item Another notable result is Theorem \ref{1c7776}: classical sequences from valuation theory split if a specific residue field has some properties. 
\end{itemize}

For a field $K$ we denote by $\overline{K}$ an algebraic closure\index{$\overline{K}$}. For a domain $R$ we denote by $Q(R)$\index{$Q(R)$} its field
of fractions.

\section{Definition of valuations}

Let $K$ be a field. 

\begin{definition}
A \emph{valuation ring}\index{valuation ring} on $K$ is a subring $\mathcal{O} \subseteq K$ such that for all $x \in K^*$ we have
$x \in \mathcal{O}$ or $x^{-1} \in \mathcal{O}$. 
\end{definition}

\begin{lemma}
 There is a bijection between the set of valuation rings of $K$ and the set of relations $\leq$ on $K^*$ which satisfy for $x,y,z \in K^*$ 
\begin{enumerate}
 \item $x \leq y$ or $y \leq x$;
 \item $x \leq y$, $y \leq z$ $\implies$ $x \leq z$;
 \item $x \leq y$ $\implies$ $xz \leq yz$;
 \item if $x+y \neq 0$, then $x \leq x+y$ or $y \leq x+y$.
\end{enumerate}
This bijection maps a valuation ring $\mathcal{O}$ to the relation which for $x, y \in K^*$ is defined by: $x \leq y$ iff $y/x \in \mathcal{O}$. The
inverse maps $\leq$ to $\{x \in K^*: 1 \leq x\} \sqcup \{0\}$.
\end{lemma}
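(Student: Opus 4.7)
The plan is to verify that each of the two prescribed maps is well defined and then that they are mutually inverse.

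First I would show that for a valuation ring $\mathcal{O}$, the relation $x \leq y :\!\iff y/x \in \mathcal{O}$ satisfies (i)–(iv). Property (i) is immediate from the defining property of $\mathcal{O}$, (ii) follows from $z/x = (z/y)(y/x)$, and (iii) from $(yz)/(xz)=y/x$. For (iv), if $x+y \neq 0$ then by (i) one of $y/x, x/y$ lies in $\mathcal{O}$; if $y/x \in \mathcal{O}$ then $(x+y)/x = 1+y/x \in \mathcal{O}$, giving $x\leq x+y$, and symmetrically in the other case.

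Second I would show that if $\leq$ satisfies (i)–(iv) then $\mathcal{O}:=\{x \in K^* : 1 \leq x\} \sqcup \{0\}$ is a valuation ring. The valuation-ring condition for $x \in K^*$ is a direct consequence of (i) applied to $(1,x)$ together with (iii) (multiplying $x \leq 1$ by $x^{-1}$ yields $1 \leq x^{-1}$). To see that $\mathcal{O}$ is a subring, closure under multiplication comes from combining $1\leq x$ with (iii) applied to $1\leq y$ (multiplication by $x$) and then transitivity. For closure under addition: if $1 \leq x, 1 \leq y$ and $x+y \neq 0$, then (iv) gives $x \leq x+y$ or $y \leq x+y$, and transitivity yields $1 \leq x+y$. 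The key technical point is that $-1 \in \mathcal{O}$: by (i), either $1 \leq -1$, in which case we are done, or $-1 \leq 1$, in which case (iii) with $z=-1$ gives $1 \leq -1$ as well. Then $1 \leq x$ combined with $-1\leq -x$ (from (iii) with $z=-1$) and $1\leq -1$ yields $1 \leq -x$ by transitivity, so $\mathcal{O}$ is closed under negation.

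Third I would check that the two maps are mutual inverses. Starting from $\mathcal{O}$ and passing to the associated relation, an element $x \in K^*$ satisfies $1 \leq x$ iff $x/1 \in \mathcal{O}$, recovering $\mathcal{O}$ on the nose. Starting from a relation $\leq$ satisfying (i)–(iv) and passing to $\mathcal{O}$ and back to the relation $\leq'$ defined by $x \leq' y \iff y/x \in \mathcal{O} \iff 1 \leq y/x$, the equivalence with $x \leq y$ is just (iii) applied via multiplication by $x$ or $x^{-1}$.

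The only step that is not completely formal is the verification that $-1 \in \mathcal{O}$, which is where axiom (iii) has to be used in an asymmetric way (via multiplication by the negative unit $-1$); everything else is bookkeeping of the order and ring axioms.
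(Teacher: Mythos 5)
Your proof is correct and follows essentially the same structure as the paper's: verify that the relation derived from a valuation ring satisfies (i)–(iv), then verify that the set derived from a relation is a valuation ring, with the same key steps (the trick of deriving $1 \leq -1$ from $-1 \leq 1$ by multiplying by $-1$, and using (iv) plus transitivity for additive closure). Two small remarks: your separate argument for closure under negation is redundant once you have $-1 \in \mathcal{O}$ and multiplicative closure, since $-x = (-1)\cdot x$; and you go a bit further than the paper by explicitly checking the two maps are mutually inverse, which the paper's proof leaves implicit.
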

\begin{proof}
 Let $\mathcal{O}$ be a valuation ring and consider the obtained relation $\leq$. Then i holds by definition. Property ii, iii hold as
$\mathcal{O}$ is a ring. For iv, suppose that $x \leq y$, that is, $y/x \in \mathcal{O}$. Then we have $1+y/x=(x+y)/x \in \mathcal{O}$. Hence $x \leq
x+y$ as required.

Given $\leq$, we claim that $\mathcal{O}=\{x \in K^*: 1 \leq x\} \sqcup \{0\}$ is a valuation ring. Let $x \in K^*$. We have $1 \leq 1$ (i) and
hence $1 \in \mathcal{O}$. Furthermore,
$-1 \in \mathcal{O}$. Indeed, by i we have $1 \leq -1$ or $-1 \leq 1$. In the first case we are done, in the second case we can multiply by $-1$
to obtain $1 \leq -1$ (iii). Take $x, y \in \mathcal{O} \setminus \{0\}$. Then if we multiply $x \geq 1$ by $y$ we obtain $xy \geq y \geq 1$ (iii),
and hence we have $xy \in \mathcal{O}$ (ii). If $x+y \neq 0$, we find $x+y\geq x \geq 1$ or $x+y \geq y \geq 1$. From ii we conclude that $x+y
\geq 1$. Take $z \in K^*$. Then we have
Finally, we have $1 \leq z$ or $z \leq 1$ (i). In the first case, we have $z \in \mathcal{O}$. In the second case, we multiply by $z^{-1}$ and iv
gives $1 \leq z^{-1}$. Hence $z^{-1} \in \mathcal{O}$. This shows that $\mathcal{O}$ is a valuation ring.
\end{proof}

Let $\mathcal{O}$ be a valuation on $K$. Consider the relation $\leq$ on $K^*$ induced from $\mathcal{O}$ as in the lemma above. One easily sees that
$\mathcal{O}^*=\{x \in K^*: 1 \leq x \textrm{ and } x \leq 1\}$. Furthermore, if $x, y \in \mathcal{O} \setminus \mathcal{O}^*$, we deduce from
property iv and ii that $x+y$ is not a unit. Hence $\mathcal{O}$
is a local ring. The induced relation $\leq$ on $K^*$ makes $K^*/\mathcal{O}^*$ into an \emph{ordered abelian group}\index{ordered abelian group}.
An ordered abelian group is an abelian group $P$, written additively, together with a relation $\leq$ such that for $a, b, c \in P$
we have:
\begin{enumerate}
 \item $a \leq b$, $b \leq a$ $\implies$ $a=b$;
 \item $a \leq b$, $b \leq c$ $\implies$ $a \leq c$;
 \item $a \leq b$ or $b \leq a$;
 \item $a \leq b$ $\implies$ $a+c \leq b+c$.
\end{enumerate}
The group morphism $v \colon K^* \to K^*/\mathcal{O}^*$ is
called the \emph{valuation map}\index{valuation map} and it satisfies for $x, y \in K^*$ with $x+y \neq 0$: $v(x+y) \geq \min(v(x),v(y))$. The ordered
abelian group $K^*/\mathcal{O}^*$ is called the \emph{value group}\index{valuation!value group}. 

To shorten notation we just write $v$ for a valuation. We denote by $\mathcal{O}_v$\index{$\mathcal{O}_v$} the
valuation ring with maximal ideal $\m_v$\index{$\m_v$}. The residue field is denoted by $\kv_v=\mathcal{O}_v/\m_v$\index{$\kv_v$}. The value group is denoted by
$\Delta_v=K^*/\mathcal{O}^*$\index{$\Delta_v$}, for which we use
additive notation. We use the notation $v \colon K^* \to \Delta_v$. 
We set\index{$\pv_v$} 
\begin{align*}
 \pv_v= \left\{ \begin{array}{cc} \chart(\kv_v) &\textrm{if } \chart(\kv_v) \neq 0 \\
         1 &\textrm{if } \chart(\kv_v) = 0.
        \end{array} \right.
\end{align*}
 A pair $(K,v)$ as above is called a \emph{valued field}\index{valued field}. Note that a valuation $v$ gives rise to the valued field
$(Q(\mathcal{O}_v),v)$ where $Q(\mathcal{O}_v)$ is the fraction field of $\mathcal{O}_v$. If $K'$ is a subfield, then we denote by $v|_{K'}$\index{$v\vert_{K'}$}\index{valued field!restriction} the
valuation on $K'$ corresponding to the valuation ring $\mathcal{O}_v \cap K$.

\section{Main results}

In this section we will provide statements of the main results. Proofs of the statements follow in Sections 4, 5, 6 and 7 and will occupy most of
this article.

\subsection{Properties of extensions of valuations}

Let $M \supseteq N$ be an extension of field. When we say that $M/N$ is separable we mean that it is algebraic and separable. Similarly, normal means
normal and algebraic (but not necessarily separable). Assume that $M/N$ is finite. We set $[M:N]_{\sv}$ for the separability
degree of the extension and $[M:N]_{\iv}$ for the
inseparability degree\index{$[M:N]_{\sv}$}\index{$[M:N]_{\iv}$}. Note that $[M:N]=[M:N]_{\sv} \cdot [M:N]_{\sv}$. 

Let $(K,v)$ be a valued field and let $L$ be an extension of $K$. An \emph{extension}\index{valued
field!extension} of $v$ to $L$ is a valuation $w$ on $L$ satisfying $\mathcal{O}_w \cap K=\mathcal{O}_v$, equivalently,
$\m_w \cap K=\m_v$. Such extensions do exist (Proposition \ref{1c55}). We denote such an extension by $(L,w) \supseteq (K,v)$\index{$(L,w) \supseteq (K,v)$} or $w/v$\index{$w/v$}. Sometimes we write $w|v$ if $w$ extends $v$. The number of
extensions of $v$ to $L$ is denoted by
$\gv_{L,v}$\index{$\gv_{L,v}$}, which is finite if $L/K$ is finite (Proposition
\ref{1c55}). Such an extension $(L,w) \supseteq (K,v)$ is called \emph{finite}\index{valued field!extension!finite} if $L/K$ is finite. In a
similar way we define such an extension to be normal, separable, \ldots. An extension induces inclusions $\Delta_v \to \Delta_w$ and $\kv_v \to
\kv_w$. The following proposition defines a lot of quantities relating to a finite extension of valued fields and gives some properties of these
quantities (see Proposition \ref{1c4444}).

\begin{proposition} \label{1c9871}
Let $(L,w) \supseteq (K,v)$ be a finite extension of valued fields. Then one has: 
\begin{itemize}
 \item $\ev(w/v):=(\Delta_w:\Delta_v) \in \Z_{\geq 1}$ (\emph{ramification index}\index{valued field!extension!ramification
index}\index{$\ev(w/v)$}); 
 \item $\ev_{\tv}(w/v):=\lcm\{m \in \Z_{\geq 1}: m|\ev(w/v),\ \gcd(m,\pv_v)=1\} \in \Z_{\geq 1}$ (\emph{tame ramification index}\index{valued
field!extension!tame ramification index}\index{$\ev_{\tv}(w/v)$}); 
 \item $\ev_{\ww}(w/v):=\frac{\ev(w/v)}{\ev_{\tv}(w/v)} \in \pv_v^{\Z_{\geq 0}}$ (\emph{wild ramification index}\index{valued field!extension!wild
ramification index}\index{$\ev_{\ww}(w/v)$});
 \item $\fv(w/v):=[\kv_w:\kv_v] \in \Z_{\geq 1}$ (\emph{residue field degree}\index{valued field!extension!residue field degree}\index{$\fv(w/v)$}); 
 \item $\fv_{\sv}(w/v):=[\kv_w:\kv_v]_{\sv} \in \Z_{\geq 1}$ (\emph{separable residue field degree}\index{valued field!extension!separable residue
field
degree}\index{$\fv_{\sv}(w/v)$}); 
 \item $\fv_{\iv}(w/v):=[\kv_w:\kv_v]_{\iv} \in \pv_v^{\Z_{\geq 0}}$ (\emph{inseparable residue field degree}\index{valued field!extension!inseparable
residue field degree}\index{$\fv_{\iv}(w/v)$});
 \item Let $M/K$ be a finite normal extension containing $L$. We define the \emph{local degree}\index{valued field!extension!local
degree}\index{$\nv(w/v)$} by $\nv(w/v):=
\frac{\gv_{M,w}}{\gv_{M,v}} \cdot [L:K] \in \Z_{\geq 1}$ and this does not depend on the choice of $M$;
 \item $\dv(w/v):=\frac{\nv(w/v)}{\ev(w/v)\fv(w/v)} \in \pv_v^{\Z_{\geq 0}}$ (\emph{defect}\index{valued field!extension!defect}\index{$\dv(w/v)$}); 
 \item $\dv_{\wv}(w/v):=\dv(w/v)\ev_{\ww}(w/v)\fv_{\iv}(w/v) \in \pv_v^{\Z_{\geq 0}}$ (\emph{wildness index}\index{valued
field!extension!wildness index}\index{$\dv_{\wv}(w/v)$});
\end{itemize}
The quantities $\ev$, $\ev_{\tv}$, $\ev_{\ww}$, $\fv$, $\fv_{\sv}$, $\fv_{\iv}$, $\nv$, $\dv$ and $\dv_{\wv}$ are multiplicative in towers.
\end{proposition}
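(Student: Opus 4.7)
The strategy is to handle each quantity in turn, roughly in order of increasing difficulty. The inclusions $\Delta_v \hookrightarrow \Delta_w$ and $\kv_v \hookrightarrow \kv_w$ come directly from $\mathcal{O}_w \cap K = \mathcal{O}_v$, which gives $\mathcal{O}_v^{*} = \mathcal{O}_w^{*} \cap K^{*}$ and $\m_w \cap \mathcal{O}_v = \m_v$. To see that $\ev(w/v)$ and $\fv(w/v)$ are finite positive integers, the essential input is the fundamental inequality $\ev(w/v)\fv(w/v) \leq [L:K]$, which I would prove in the usual way: choose $x_1,\ldots,x_{\ev}$ in $L^{*}$ representing the distinct cosets of $\Delta_v$ in $\Delta_w$, and $y_1,\ldots,y_{\fv} \in \mathcal{O}_w$ whose residues are $\kv_v$-linearly independent in $\kv_w$; then the $\ev\fv$ products $x_i y_j$ are $K$-linearly independent in $L$ by a standard ultrametric argument. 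Given $\ev, \fv \in \Z_{\geq 1}$, the structural claims about $\ev_{\tv}, \ev_{\ww}, \fv_{\sv}, \fv_{\iv}$ are formal: by definition $\ev_{\tv}$ is the prime-to-$\pv_v$ part of $\ev$, so $\ev_{\ww} = \ev/\ev_{\tv} \in \pv_v^{\Z_{\geq 0}}$, and $\fv_{\iv} \in \pv_v^{\Z_{\geq 0}}$ is the standard fact about inseparable degrees.

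The local degree $\nv(w/v) = (\gv_{M,w}/\gv_{M,v})[L:K]$ is the delicate item. For independence from $M$, I would place two candidates $M_1, M_2$ inside a single finite normal $M_3/K$ and compare the formulas for $M_1$ and $M_3$ (and symmetrically for $M_2$). Transitivity of $\Aut(M_3/K)$ on extensions of $v$ to $M_3$, established earlier in the article, implies that every extension of $v$ to $M_1$ has the same number $h$ of further extensions to $M_3$: any automorphism of $M_1/K$ conjugating two such extensions lifts to $M_3/K$ by normality, and conjugation then bijects their fibres in $M_3$. Summing over extensions of $v$, respectively of $w$, in $M_1$ gives $\gv_{M_3,v} = h\,\gv_{M_1,v}$ and $\gv_{M_3,w} = h\,\gv_{M_1,w}$, so the ratio $\gv_{M,w}/\gv_{M,v}$ is preserved. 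To see $\nv \in \Z_{\geq 1}$ I would take $M$ to contain the normal closure of $L$ and rewrite $\nv(w/v)$ via the decomposition group of a fixed extension of $w$ to $M$, obtaining a positive integer. Multiplicativity of $\nv$ in a tower $M \supseteq L \supseteq K$ then follows by computing all three local degrees through one common normal extension $N/K$ and cancelling the $\gv_{N,\bullet}$ factors.

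The remaining multiplicativities are routine: group-index multiplicativity handles $\ev$; separable and inseparable field-degree multiplicativity handle $\fv, \fv_{\sv}, \fv_{\iv}$; extraction of prime-to-$\pv_v$ and $\pv_v$ parts preserves the multiplicativity of $\ev$ and so handles $\ev_{\tv}, \ev_{\ww}$; and the defining formulas $\dv = \nv/(\ev\fv)$ and $\dv_{\wv} = \dv\,\ev_{\ww}\fv_{\iv}$ pass multiplicativity on to $\dv$ and $\dv_{\wv}$. The only remaining range statement, $\dv(w/v) \in \pv_v^{\Z_{\geq 0}}$, is an Ostrowski-type defect theorem; I expect this, together with the independence-of-$M$ claim for $\nv$, to be the main obstacles, as both draw essentially on the Galois-theoretic machinery for extensions of valuations developed earlier in the article.
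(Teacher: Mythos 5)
Where you supply arguments, your route matches the paper's: the fundamental inequality $\ev(w/v)\fv(w/v)\leq [L:K]$ gives finiteness of $\ev$ and $\fv$; independence of $\nv$ from $M$ is handled by placing candidates in a common finite normal extension and comparing fiber sizes via normality and the transitivity of Proposition~\ref{1c55} (the paper simply takes $M'\supseteq M$ and argues equal fibers directly, but the content is identical); and the identification $\nv(w/v)=[L_{\hv,x}:K_{\hv,x}]$ via the decomposition field gives $\nv\in\Z_{\geq 1}$, after which multiplicativity of all quantities is bookkeeping.

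However, you explicitly leave the claim $\dv(w/v)\in\pv_v^{\Z_{\geq 0}}$ as an acknowledged obstacle rather than proving it, and you likewise never establish the integrality $\dv(w/v)\in\Z_{\geq 1}$ that the claim subsumes. These are not formalities: the fundamental inequality only gives $\ev\fv\leq\nv$, not $\ev\fv\mid\nv$, and neither the integrality nor the $\pv_v$-power statement follows without the decomposition--inertia--ramification filtration developed for normal extensions (Lemma~\ref{1c90}, Proposition~\ref{1c854}). The paper's argument is: by multiplicativity of $\dv$ reduce to $L/K$ finite normal (any finite extension is sandwiched between $K$ and a normal closure); in the normal case
\begin{align*}
\dv(w/v)=\frac{[L:K_{\hv,w}]}{\ev(w/v)\fv(w/v)}=\frac{[L:K_{\vv,w}]}{\ev_{\wv}(w/v)\fv_{\iv}(w/v)},
\end{align*}
using $[K_{\iv,w}:K_{\hv,w}]=\fv_{\sv}$ and $[K_{\vv,w}:K_{\iv,w}]=\ev_{\tv}$; since $[L:K_{\vv,w}]$, $\ev_{\wv}$ and $\fv_{\iv}$ are all powers of $\pv_v$, the defect is a power of $\pv_v$, and together with the positivity of $\dv$ coming from $\ev\fv\leq\nv$ one gets a non-negative power. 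Without supplying this (or an equivalent Ostrowski-style argument), your proof of the range of $\dv$ and $\dv_{\wv}$ is incomplete.
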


\begin{definition} \label{1c9000}
Let $(L,w) \supseteq (K,v)$ be a finite extension of valued fields. Then we have the following properties which $(L,w) \supseteq (K,v)$ can satisfy:
\begin{itemize}
 \item \emph{immediate}\index{valued field!extension!immediate}: $\dv_{\wv}(w/v)=\ev_{\tv}(w/v)=\fv_{\sv}(w/v)=1$, equivalently, $\nv(w/v)=1$;
 \item \emph{unramified}\index{valued field!extension!unramified}: $\dv_{\wv}(w/v)=\ev_{\tv}(w/v)=1$;
 \item \emph{tame}\index{valued field!extension!tame}: $\dv_{\wv}(w/v)=1$; 
 \item \emph{local}\index{valued field!extension!local}: $\gv_{L,v}=1$;
 \item \emph{totally ramified}\index{valued field!extension!totally ramified}: $\fv_{\sv}(w/v)=\gv_{L,v}=1$;
 \item \emph{totally wild}\index{valued field!extension!totally wild}: $\ev_{\tv}(w/v)=\fv_{\sv}(w/v)=\gv_{L,v}=1$.
\end{itemize}
We say that $v$ is \emph{totally split}\index{valued field!extension!totally split} in $L$ if $g_{L,v}=[L:K]$. 

As the various degrees are multiplicative, we can extend this definition in the following way.
Let $(L,w) \supseteq (K,v)$ be an algebraic extension of valued fields. Then $w/v$ is \emph{immediate} (respectively \emph{unramified}, \emph{tame},
\emph{local}, \emph{totally
ramified}, \emph{totally wild}) if all intermediate extensions $(L',w')$ of $(L,w) \supseteq (K,v)$ where $L'/K$ is finite are immediate (respectively
unramified, tame,
local, totally ramified, totally wild). We say that $v$ is \emph{totally split} in $L$ if all intermediate extensions $(L',w')$ of
$(L,w) \supseteq (K,v)$ with $L'/K$ finite are totally split.
\end{definition}

\subsection{Normal extensions}

\begin{definition} \label{1c344}
 Let $(M,x) \supseteq (K,v)$ be a normal algebraic extension of valued fields and let $G=\Aut_K(M)$. Note that $G$ acts on the set of valuations on
$M$ extending $v$ by $\mathcal{O}_{g(x')}=g(\mathcal{O}_{x'})$. Let \index{$\Dv_{x,K}$}$\Dv_{x,K}=\{g \in G:
gx=x\}$ be the \emph{decomposition group}\index{decomposition group}\index{valued field!extension!decomposition group} of $x$ over $K$. We define the
\emph{inertia
group}\index{inertia group}\index{valued field!extension!inertia group}\index{$\Iv_{x,K}$} $\Iv_{x,K} \subseteq \Dv_{x,K}$ of $x$ over $K$ to be the
kernel of the
natural group morphism $\Dv_{x,K} \to \Aut_{\kv_v}(\kv_x)$. Furthermore, there is a
natural group morphism 
\begin{align*}
\Iv_{x,K} &\to \Hom(\Delta_x/\Delta_v,\kv_x^*) \\
\overline{c} &\mapsto \overline{\frac{g(c)}{c}}
\end{align*}
(see Lemma \ref{1crene}). We define the
\emph{ramification group}\index{ramification group}\index{valued
field!extension!ramification group}\index{$\Vv_{x,K}$} of $x$ over $K$ to be its kernel. We denote it by $\Vv_{x,K}$. 

Let \index{$\Gamma_{x,v}$}$\Gamma_{x,v} = \im \left( \Aut_{\kv_v}(\kv_x) \to \Aut_{\kv_v^*}(\kv_x^*) \right)$.
Let $\Aut_{K^*,\Gamma_{x,v}}(M^*/(1+\m_x))$\index{$\Aut_{K^*,\Gamma_{x,v}}(M^*/(1+\m_x))$} be the subgroup of $\Aut(M^*/(1+\m_x))$ consisting of those automorphisms such that the restriction to $\kv_x^*$ lies in $\Gamma_{x,v}$ and which are the identity on $K^*/(1+\m_v)$. We have a natural map
$\Dv_{x,K} \to \Aut_{K^*,\Gamma_{x,v}}(M^*/(1+\m_x))$ (see the discussion after Lemma \ref{1crene}).
\end{definition}

We endow $G$ with the profinite topology. This means that we view $G$ as a subset of $M^M$. We endow $M$ with the discrete topology, $M^M$
with the product topology and $G$ with the induced topology. Similarly we define profinite topologies on $\Aut_{\kv_v}(\kv_x) \subseteq
\kv_x^{\kv_x}$ and $\Hom(\Delta_x/\Delta_v, \kv_x^*) \subseteq \left(\kv_x^* \right)^{\Delta_x/\Delta_v}$ where $\kv_x$ and $\kv_x^*$ have the
discrete topology. 
Furthermore, let $S$ be the set of valuations extending $v$ to $M$. For $x' \in S$ and a finite extension $L$ of $K$ in $M$ we set $U_{x',L}=\{x''
\in S: x''|_L=x'|_L\}$. This is a basis for a topology on $S$. 
We give $\Aut_{K^*,\Gamma_{x,v}}(M^*/(1+\m_x))$ the following topology. We give $C=M^*/(1+\m_x)$ the discrete topology, $C^C$ the product topology and $\Aut_{K^*,\Gamma_{x,v}}(M^*/(1+\m_x))$ the induced topology.

\begin{definition}
 Let $L/K$ be a field extension. We set $L_{K,\sep}$\index{$L_{K,\sep}$} for the field extension of $K$ consisting of
the elements in $L$ which are separable and algebraic over $K$. 
\end{definition}

\begin{definition}
Let $(M,x) \supseteq (K,v)$ be a normal algebraic extension of valued fields. We define $K_{\hv,x}=M_{K,\sep}^{\Dv_{x,K}}$ (\emph{decomposition
field}\index{decomposition field}\index{$K_{\hv,x}$}, $\hv$ stands for Henselization), $K_{\iv,x}=M_{K,\sep}^{\Iv_{x,K}}$ (\emph{inertia
field}\index{inertia field}\index{$K_{\iv,x}$})
and $K_{\vv,x}=M_{K,\sep}^{\Vv_{x,K}}$ (\emph{ramification field}\index{ramification field}\index{$K_{\vv,x}$}). Note that all these extensions are
separable over $K$ and that we have $K \subseteq K_{\hv,x} \subseteq K_{\iv,x} \subseteq K_{\vv,x} \subseteq M$.
\end{definition}

Recall that for a prime $p$ and a profinite group $H$ a pro-$p$-Sylow subgroup $H'$ is a maximal subgroup of $H$ such that $H'$ is a projective
limit of finite groups of $p$-power order. 

We define the \emph{Steinitz monoid}\index{Steinitz monoid} as the following set. Let $\mathcal{P} \subset \Z$ be the set of primes. \emph{Steinitz numbers}\index{Steinitz numbers} are of the form
$\prod_{p \in \mathcal{P}} p^{n_p}$ with $n_{p} \in \Z_{\geq 0} \sqcup \{\infty\}$. This set has an obvious monoid structure and there is an obvious way for defining $\gcd$ and $\lcm$ for arbitrary sets of Steinitz numbers. Furthermore, there is an obvious notion of divibility. 

Let $H$ be a profinite group. Then we define its \emph{order}\index{order}\index{$\ord(H)$}\index{profinite group!order} to be
\begin{align*}
 \ord(H) = \lcm \{ [H:N]: N \textrm{ open normal subgroup of }H\},
\end{align*}
and we define its \emph{exponent}\index{exponent}\index{$\expo(H)$}\index{profinite group!exponent} to be
\begin{align*}
 \expo(H) = \lcm \{ \expo(H/N): N \textrm{ open normal subgroup of }H\}.
\end{align*}
Both are Steinitz numbers. Furthermore, if $H=\underset{\underset{i \in I}{\leftarrow}}{\lim}\ H_i$ where the $H_i$ are finite, then one has $\ord(H)=\lcm(\ord(H_i): i \in I)$ and $\expo(H)=\lcm(\exp(H_i): i \in I)$. 

The proof of the following theorem can be found on Page \pageref{1cp1}.

\begin{theorem} \label{1c333}
 Let $(M,x) \supseteq (K,v)$ be a normal algebraic extension of valued fields and let $G=\Aut_K(M)$. Then $G$ acts continuously on the set $S$ consisting of the valuations of $M$ extending $K$ and induces an isomorphism of topological $G$-sets
\begin{align*}
 G/\Dv_{x,K} &\to S \\
\overline{g} &\mapsto gx. 
\end{align*}
For $g \in G$ one has $\Dv_{g(x),K}=g \Dv_{x,K} g^{-1}$, $\Iv_{g(x),K}=g \Iv_{x,K} g^{-1}$ and $\Vv_{g(x),K}=g \Vv_{x,K} g^{-1}$. One also has $K_{\hv,g(x)}= g K_{\hv,x}$, $K_{\iv,g(x)}= g K_{\iv,x}$ and $K_{\vv,g(x)}= g K_{\vv,x}$.

Furthermore, we have exact sequences of profinite groups
\begin{align*}
 0 \to \Iv_{x,K} \to \Dv_{x,K} \to \Aut_{\kv_v}(\kv_x) \to 0,
\end{align*}
\begin{align*}
 0 \to \Vv_{x,K} \to \Iv_{x,K} \to \Hom(\Delta_x/\Delta_v,\kv_x^*) \to 0
\end{align*}
and
\begin{align*}
 0 \to \Vv_{x,K} \to \Dv_{x,K} \to  \Aut_{K^*,\Gamma_{x,v}}(M^*/(1+\m_x)) \to 0.
\end{align*}
The extension $\kv_x/\kv_v$ is normal and $\Vv_{x,K}$ is the unique pro-$\pv_{v}$-Sylow subgroup of
$\Iv_{x,K}$. Then for any integer $r \in \Z_{\geq 1}$ dividing $\exp(\Iv_{x,K}/\Vv_{x,K})$ the field $\kv_x$ contains a primitive $r$-th root of unity.

Let $(L,w)$ be an intermediate extension of $(M,x) \supseteq (K,v)$ and let $H=\Aut_L(M)$. Then one has:
\begin{enumerate}
 \item $\Dv_{x,L}=\Dv_{x,K} \cap H$, $\Iv_{x,L}=\Iv_{x,K} \cap H$ and $\Vv_{x,L}=\Vv_{x,K} \cap H$; 
 \item $L_{\hv,x}=K_{\hv,x}L$, $L_{\iv,x}=K_{\iv,x}L$ and $L_{\vv,x}=K_{\vv,x}L$.
\end{enumerate}
If in addition we assume that $L/K$ is normal, then we have exact sequences
\begin{align*}
0 \to \Dv_{x,L} \to \Dv_{x,K} \to \Dv_{w,K} \to 0,
\end{align*}
\begin{align*}
0 \to \Iv_{x,L} \to \Iv_{x,K} \to \Iv_{w,K} \to 0,
\end{align*} and
\begin{align*}
0 \to \Vv_{x,L} \to \Vv_{x,K} \to \Vv_{w,K} \to 0.
\end{align*}
Under the normality assumption we have $K_{\hv,x|_{L}}=K_{\hv,x} \cap L$, $K_{\iv,x|_{L}}=K_{\iv,x} \cap L$ and $K_{\vv,x|_{L}}=K_{\vv,x} \cap L$.
\end{theorem}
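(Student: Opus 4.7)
The overall plan is to prove all assertions first for finite normal extensions $M/K$ and then pass to the inverse limit indexed by finite normal subextensions of $M_{K,\sep}$; every claim is compatible with this limit process and the profinite topologies involved. For finite normal $M/K$, the heart of the matter is the transitivity of $G$ on $S$. I would argue by contradiction: if no $g \in G$ sends $x_1$ to $x_2$, then for each $g$ the valuation rings $\mathcal{O}_{x_2}$ and $g\mathcal{O}_{x_1}$ are distinct valuation rings of $M$ dominating $\mathcal{O}_v$, forcing $\m_{x_2} \not\subseteq g\m_{x_1}$. By prime avoidance pick $a \in \m_{x_2}$ which is a unit in every $g\mathcal{O}_{x_1}$; then the norm $\prod_g g(a) \in K$ sits in $\m_v$ via $x_2$ and is a unit via $x_1$, contradicting $v = x_1|_K$. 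Orbit-stabilizer gives the bijection $G/\Dv_{x,K} \cong S$, and matching the neighborhood basis $U_{x,L}$ with the coset system of $\Dv_{x,K} \cdot \Aut_L(M)$ identifies the two topologies. The conjugation formulas $\Dv_{g(x),K} = g\Dv_{x,K}g^{-1}$ and the transports of $K_{\hv,x}, K_{\iv,x}, K_{\vv,x}$ under $g$ are then immediate.

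Next I would establish the three exact sequences; exactness in the middle is definitional, the work is surjectivity. Normality of $\kv_x/\kv_v$ follows by lifting $\alpha \in \kv_x$ to $a \in \mathcal{O}_x$ and reducing each Galois conjugate $g(a)$ via the transitive identification $\mathcal{O}_{gx}/\m_{gx} \cong \kv_x$, showing all conjugates of $\alpha$ lie in $\kv_x$. Surjectivity of $\Dv_{x,K} \twoheadrightarrow \Aut_{\kv_v}(\kv_x)$ is then obtained by lifting a chosen generator of the maximal separable subextension of $\kv_x/\kv_v$ and using Galois conjugation on its minimal polynomial. For $\Iv_{x,K} \twoheadrightarrow \Hom(\Delta_x/\Delta_v,\kv_x^*)$, surjectivity is the chief obstacle: given $\chi$, one constructs compatible $g$'s in the inertia groups of finite normal subextensions by prescribing $g(\pi_\gamma) = c_\gamma \pi_\gamma$ on representatives $\pi_\gamma$ of a transversal of $\Delta_x/\Delta_v$, with $c_\gamma$ a lift of $\chi(\gamma)$, extending multiplicatively, and then checking valuation-preservation and triviality on residues; passage to the limit yields $g \in \Iv_{x,K}$. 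The third sequence splices the first two.

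The pro-$\pv_v$-Sylow claim for $\Vv_{x,K}$ follows since $\Hom(\Delta_x/\Delta_v,\kv_x^*)$ has exponent prime to $\pv_v$ (no nontrivial $\pv_v$-power roots of unity in $\kv_x^*$), so $\Iv_{x,K}/\Vv_{x,K}$ has order prime to $\pv_v$; that $\Vv_{x,K}$ itself is pro-$\pv_v$ is obtained by re-applying the theorem to the ramification extension, where the residue-field and value-group extensions have become trivial. Uniqueness as a Sylow follows from normality of $\Vv_{x,K}$ in $\Iv_{x,K}$ combined with the prime-to-$\pv_v$ quotient. The primitive-$r$-th-root-of-unity statement reads off the injection $\Iv_{x,K}/\Vv_{x,K} \hookrightarrow \Hom(\Delta_x/\Delta_v,\kv_x^*)$: any $r \mid \exp(\Iv_{x,K}/\Vv_{x,K})$ occurs as the order of an image element, and hence of a root of unity in $\kv_x^*$.

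Finally, $\Dv_{x,L} = \Dv_{x,K} \cap H$ and the analogues for $\Iv, \Vv$ are direct from the definitions, and $L_{\hv,x}=K_{\hv,x}L$ is the Galois-correspondence translation of the first. When $L/K$ is normal, restriction gives $G \twoheadrightarrow \Aut_K(L)$ with kernel $H$; intersecting this with $\Dv_{x,K}, \Iv_{x,K}, \Vv_{x,K}$ and invoking the transitivity result at the level of $L$ produces surjectivity onto $\Dv_{w,K}, \Iv_{w,K}, \Vv_{w,K}$, while the kernels $\Dv_{x,L}, \Iv_{x,L}, \Vv_{x,L}$ are read off from $\Dv_{x,L} = \Dv_{x,K} \cap H$. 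The intersection formulas $K_{\hv,x|_L} = K_{\hv,x} \cap L$ and their siblings are Galois-correspondence duals. The main obstacles I expect are the transitivity argument and the surjectivity of $\Iv_{x,K} \to \Hom(\Delta_x/\Delta_v,\kv_x^*)$; both demand explicit constructions rather than formal diagram-chasing.
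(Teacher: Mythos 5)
Your transitivity argument for the finite case — prime avoidance in the integral closure $R$ of $\mathcal{O}_v$, followed by the norm $\prod_g g(a)$ giving a contradiction — is a classical and valid alternative to the paper's route through Tate's lemma (Proposition \ref{1cta} and Corollary \ref{1c312}); either works, and the norm argument is arguably more elementary, at the cost of having to rerun a separate compactness argument to reach the infinite case (the paper's Tate approach handles infinite normal extensions in one stroke). The normality of $\kv_x/\kv_v$ and the conjugation formulas are also fine. But there are two genuine gaps downstream.

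First, the surjectivity of $\Iv_{x,K} \to \Hom(\Delta_x/\Delta_v,\kv_x^*)$. You propose to build $g$ directly by prescribing $g(\pi_\gamma)=c_\gamma\pi_\gamma$ on a transversal of $\Delta_x/\Delta_v$ and ``extending multiplicatively.'' This does not produce a field automorphism: $M^*$ is not free on the $\pi_\gamma$ over $\mathcal{O}_x^*K^*$, a multiplicative extension is at best a group endomorphism of $M^*$ subject to relations you have not checked, and nothing forces it to respect addition. In fact, whether a field automorphism with those prescribed ratios exists is \emph{exactly} the content of the surjectivity statement, so the construction is circular. The paper avoids this entirely by a counting argument (Proposition \ref{1c58}): the injection $\Iv_{x,K}/\Vv_{x,K} \hookrightarrow \Hom(\Delta_{x_{\vv}}/\Delta_{x_{\iv}},\kv_{x_{\vv}}^*)$ together with the chain of inequalities coming from Proposition \ref{1c55} forces $[K_{\vv,x}:K_{\iv,x}]=\#\Hom(\cdots)$, and a purely group-theoretic comparison then yields surjectivity for the full extension. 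Without something of this flavor the surjectivity claim is simply unproved. (The same counting is then what makes your ``splice the first two'' remark for the third sequence actually go through; the paper verifies exactness there by computing $\#\Aut_{K^*,\Gamma_{x,v}}(M^*/(1+\m_x))=\#\Dv_{x,K}/\#\Vv_{x,K}$.)

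Second, your argument that $\Vv_{x,K}$ is a pro-$\pv_v$-group ``by re-applying the theorem to the ramification extension'' is circular. Re-running the theorem for $M/K_{\vv,x}$ only tells you that $\Dv_{x,K_{\vv,x}}=\Iv_{x,K_{\vv,x}}=\Vv_{x,K_{\vv,x}}=\Vv_{x,K}$ and that the quotients in the exact sequences for that sub-extension vanish; this places no constraint on the order of $\Vv_{x,K}$ itself. What is actually needed is an independent input, namely Lemma \ref{1c66}: if every $h$ in the automorphism group moves every $a\in L'^*$ by a factor in $1+\m_{u'}$, then the group is a $\pv_u$-group. That trace argument is the real content and cannot be recovered from the exact sequences alone. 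A smaller version of the same kind of slip occurs with $L_{\hv,x}=K_{\hv,x}L$, which you call a ``Galois-correspondence translation'': since $K_{\hv,x}$ and $L_{\hv,x}$ live as fixed fields inside $M_{K,\sep}$ and $M_{L,\sep}$ respectively, and these are different fields when $L/K$ is inseparable, the translation genuinely requires the bookkeeping of Lemma \ref{1c354}.
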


If the extension $M/K$ is finite, the previous theorem implies the following (proof on Page \pageref{1cp3}).

\begin{proposition} \label{1c854}
Let $(M,x) \supseteq (K,v)$ be a finite normal extension of valued fields. Then one has
\begin{align*}
~ [K_{\hv,x}:K] &= \gv_{M,v} \\
~ [K_{\iv,x}:K_{\hv,x}] &= \fv_{\sv}(x/v)=\fv_{\sv}(x|_{K_{\iv,x}}/x|_{K_{\hv,x}}) \\
~ [K_{\vv,x}:K_{\iv,x}] &= \ev_{\tv}(x/v)=\ev_{\tv}(x|_{K_{\vv,x}}/x|_{K_{\iv,x}}) \\
~ [M:K_{\vv,x}] &= \dv_{\wv}(x/v)=\dv_{\wv}(x/x|_{K_{\vv,x}}).
\end{align*}
Let $(L,w)$ be an intermediate extension of $(M,x) \supseteq (K,v)$. Then one has
\begin{align*}
~ [L_{\hv,x}:K_{\hv,x}] &= \dv_{\wv}(w/v)\ev_{\tv}(w/v)\fv_{\sv}(w/v) \\
~ [L_{\iv,x}:K_{\iv,x}] &= \dv_{\wv}(w/v)\ev_{\tv}(w/v) \\
~ [L_{\vv,x}:K_{\vv,x}] &= \dv_{\wv}(w/v).
\end{align*}
\end{proposition}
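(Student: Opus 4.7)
The plan is to read off everything from Theorem~\ref{1c333} via the Galois correspondence on the finite Galois extension $M_{K,\sep}/K$. Set $G=\Aut_K(M)$, $D=\Dv_{x,K}$, $I=\Iv_{x,K}$, $V=\Vv_{x,K}$. Normality of $M/K$ gives $|G|=[M_{K,\sep}:K]=[M:K]_{\sv}$, and the fixed fields of $D\supseteq I\supseteq V$ in $M_{K,\sep}$ are $K_{\hv,x}\subseteq K_{\iv,x}\subseteq K_{\vv,x}$. The orbit part of Theorem~\ref{1c333} gives $\gv_{M,v}=|G/D|$, whence $\nv(x/v)=[M:K]/\gv_{M,v}=[M:K]_{\iv}\cdot|D|$, which converts all subfield indices into group-theoretic data.

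The three exact sequences of Theorem~\ref{1c333} now yield the four consecutive indices: $[K_{\hv,x}:K]=|G/D|=\gv_{M,v}$; $[K_{\iv,x}:K_{\hv,x}]=[D:I]=|\Aut_{\kv_v}(\kv_x)|=[\kv_x:\kv_v]_{\sv}=\fv_{\sv}(x/v)$ using the normality of $\kv_x/\kv_v$; and $[K_{\vv,x}:K_{\iv,x}]=[I:V]=|\Hom(\Delta_x/\Delta_v,\kv_x^*)|$. The main nontrivial point is identifying this last cardinality with $\ev_{\tv}(x/v)$: since $V$ is the pro-$\pv_v$-Sylow of $I$, $[I:V]$ is coprime to $\pv_v$, so the $\pv_v$-primary part of $\Delta_x/\Delta_v$ contributes trivially; the $\pv_v$-prime summand has order $\ev_{\tv}(x/v)$, and the roots-of-unity assertion of Theorem~\ref{1c333} supplies enough roots of unity in $\kv_x^*$ for $|\Hom|$ to attain that value. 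Finally $[M:K_{\vv,x}]=[M:K]_{\iv}\cdot|V|$, and substituting $|D|=\fv_{\sv}(x/v)\ev_{\tv}(x/v)|V|$ into $\nv(x/v)=\ev_{\tv}(x/v)\fv_{\sv}(x/v)\dv_{\wv}(x/v)=[M:K]_{\iv}|D|$ gives $|V|=\dv_{\wv}(x/v)/[M:K]_{\iv}$ and hence $[M:K_{\vv,x}]=\dv_{\wv}(x/v)$.

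For the alternative right-hand sides $\fv_{\sv}(x|_{K_{\iv,x}}/x|_{K_{\hv,x}})$, $\ev_{\tv}(x|_{K_{\vv,x}}/x|_{K_{\iv,x}})$, $\dv_{\wv}(x/x|_{K_{\vv,x}})$ I re-run the argument with $K$ replaced by $K_{\hv,x}$, $K_{\iv,x}$, or $K_{\vv,x}$. Theorem~\ref{1c333} yields $\Dv_{x,K_{\hv,x}}=D$ and $\Iv_{x,K_{\hv,x}}=I$; $\Dv_{x,K_{\iv,x}}=\Iv_{x,K_{\iv,x}}=I$, $\Vv_{x,K_{\iv,x}}=V$; and $\Dv_{x,K_{\vv,x}}=\Iv_{x,K_{\vv,x}}=\Vv_{x,K_{\vv,x}}=V$. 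Feeding these in produces $\fv_{\sv}(x/x|_{K_{\iv,x}})=\ev_{\tv}(x/x|_{K_{\vv,x}})=1$ together with $\fv_{\sv}(x/x|_{K_{\hv,x}})=\fv_{\sv}(x/v)$, $\ev_{\tv}(x/x|_{K_{\iv,x}})=\ev_{\tv}(x/v)$, $\dv_{\wv}(x/x|_{K_{\vv,x}})=\dv_{\wv}(x/v)$; tower multiplicativity of $\fv_{\sv}$, $\ev_{\tv}$, $\dv_{\wv}$ then delivers the stated alternative forms.

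For the intermediate-extension identities I invoke $L_{\hv,x}=K_{\hv,x}L$, $L_{\iv,x}=K_{\iv,x}L$, $L_{\vv,x}=K_{\vv,x}L$ from Theorem~\ref{1c333}, and run the first two steps with base $L$ in place of $K$ to get $[L_{\hv,x}:L]=\gv_{M,w}$, $[L_{\iv,x}:L]=\fv_{\sv}(x/w)\gv_{M,w}$, $[L_{\vv,x}:L]=\ev_{\tv}(x/w)\fv_{\sv}(x/w)\gv_{M,w}$. Multiplying by $[L:K]$, dividing by the already-established values of $[K_{\hv,x}:K]$, $[K_{\iv,x}:K]$, $[K_{\vv,x}:K]$ respectively, and using $\nv(w/v)=\gv_{M,w}[L:K]/\gv_{M,v}=\ev_{\tv}(w/v)\fv_{\sv}(w/v)\dv_{\wv}(w/v)$ together with multiplicativity of $\fv_{\sv}$ and $\ev_{\tv}$ along $v\subseteq w\subseteq x$, yields the three identities. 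The only genuinely nontrivial input throughout is the cardinality $|\Hom(\Delta_x/\Delta_v,\kv_x^*)|=\ev_{\tv}(x/v)$; everything else is careful tower arithmetic.
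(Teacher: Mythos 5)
Your reduction to group-theoretic data via Theorem \ref{1c333} is mostly sound (the orbit count, $[K_{\hv,x}:K]=[G:\Dv_{x,K}]=\gv_{M,v}$, $[\Dv_{x,K}:\Iv_{x,K}]=\#\Aut_{\kv_v}(\kv_x)=\fv_{\sv}(x/v)$ by normality of $\kv_x/\kv_v$, and the tower arithmetic at the end all check out), but the step you yourself single out as the only nontrivial input has a genuine gap. You claim $[\Iv_{x,K}:\Vv_{x,K}]=\#\Hom(\Delta_x/\Delta_v,\kv_x^*)=\ev_{\tv}(x/v)$ because "the roots-of-unity assertion of Theorem \ref{1c333} supplies enough roots of unity." That assertion only concerns integers $r$ dividing $\expo(\Iv_{x,K}/\Vv_{x,K})$, and by the very exact sequence you are using, $\expo(\Iv_{x,K}/\Vv_{x,K})=\expo\bigl(\Hom(\Delta_x/\Delta_v,\kv_x^*)\bigr)$; so the clause is essentially a tautology about the Hom group (each $\Hom(\Z/n,\kv_x^*)=\mu_n(\kv_x)$ is cyclic, so $\kv_x$ automatically has primitive roots of unity of those orders) and gives no lower bound on $\#\Hom$. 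What you actually need is that $\kv_x$ contains primitive $r$-th roots of unity for every $r$ dividing the exponent of the prime-to-$\pv_v$ part of $\Delta_x/\Delta_v$, and nothing you cite rules out, a priori, that $\Iv_{x,K}=\Vv_{x,K}$ while $\ev_{\tv}(x/v)>1$. Since the same identification is re-run over the bases $K_{\iv,x}$, $K_{\vv,x}$ and $L$, the gap propagates to $[K_{\vv,x}:K_{\iv,x}]$, $[M:K_{\vv,x}]$, the alternative right-hand sides, and the three intermediate-extension identities.

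The missing ingredient is valuation-theoretic, not group-theoretic, and it is exactly what the paper's chain Proposition \ref{1c58} $\to$ Lemma \ref{1c90} (which the paper's one-line proof of Proposition \ref{1c854} quotes) supplies: the fundamental inequality $\ev\cdot\fv\leq[\,\cdot:\cdot\,]$ of Proposition \ref{1c55} applied to $K_{\vv,x}/K_{\iv,x}$, sandwiched against the injection $\Iv_{x,K}/\Vv_{x,K}\hookrightarrow\Hom(\Delta_{x_{\vv}}/\Delta_{x_{\iv}},\kv_{x_{\vv}}^*)$ and the trivial upper bound $\#\Hom\leq\ev_{\tv}$, forces $[K_{\vv,x}:K_{\iv,x}]=\ev(x_{\vv}/x_{\iv})=\ev_{\tv}(x_{\vv}/x_{\iv})$ (and, as a by-product, the roots-of-unity statement -- it is an output of this count, not an input). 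One then needs the bookkeeping $\ev(x_{\hv}/v)=\ev(x_{\iv}/x_{\hv})=1$ and $\ev(x/x_{\vv})=\ev_{\wv}(x/x_{\vv})$ (Propositions \ref{1c56}--\ref{1c58}, via Lemma \ref{1c70}) to see that all tame ramification is concentrated in $[K_{\vv,x}:K_{\iv,x}]$, i.e.\ $\ev_{\tv}(x_{\vv}/x_{\iv})=\ev_{\tv}(x/v)$. If you import that sandwich (or simply Lemma \ref{1c90}) at the point where you assert $\#\Hom(\Delta_x/\Delta_v,\kv_x^*)=\ev_{\tv}(x/v)$, the rest of your argument goes through.
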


The proof of the following theorem can be found on Page \pageref{1c7779}.

\begin{theorem} \label{1c7776}
Let $(M,x) \supseteq (K,v)$ be a normal extension of valued fields. Then the following hold.
\begin{enumerate}
\item Assume that $\kv_x$ has no cyclic extensions of prime order dividing the order of $\Iv_{x,K}/\Vv_{x,K}$. Then the exact sequence 
\begin{align*}
0 \to \Iv_{x,K}/\Vv_{x,K} \to \Dv_{x,K}/\Vv_{x,K} \to \Dv_{x,K}/\Iv_{x,K} \to 0
\end{align*}
is right split.
\item Assume that $\kv_x$ has no cyclic extensions of prime order dividing $\pv_v$ or that $\pv_v \nmid \ord(\Iv_{x,K})$. Then the exact sequence 
\begin{align*}
0 \to \Vv_{x,K} \to \Dv_{x,K} \to \Dv_{x,K}/\Vv_{x,K} \to 0
\end{align*}
is right split.
\item Assume that $\kv_x$ has no cyclic extensions of prime order dividing $\ord(\Iv_{x,K})$. Then the exact sequence
\begin{align*}
0 \to \Iv_{x,K} \to \Dv_{x,K} \to \Dv_{x,K}/\Iv_{x,K} \to 0
\end{align*}
is right split.
\end{enumerate}
\end{theorem}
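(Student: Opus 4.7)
The plan is to establish (1) and (2) separately and then deduce (3). First, I observe that (3) follows from (1) and (2): the hypothesis of (3) implies both of the hypotheses of (1) and (2), since every prime dividing $\ord(\Iv_{x,K}/\Vv_{x,K})$ divides $\ord(\Iv_{x,K})$, and either $\pv_v \mid \ord(\Iv_{x,K})$ (in which case $\kv_x$ has no cyclic $\pv_v$-extension) or $\pv_v \nmid \ord(\Iv_{x,K})$. Given sections $s_1$ for (1) and $s_2$ for (2), the composition $s_2 \circ s_1 \colon \Dv_{x,K}/\Iv_{x,K} \to \Dv_{x,K}$ is a section for (3).

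For (1), my approach is Kummer-theoretic. By Theorem \ref{1c333}, there is an isomorphism $\Iv_{x,K}/\Vv_{x,K} \cong \Hom(\Delta_x/\Delta_v, \kv_x^*)$, and $\kv_x$ contains a primitive $r$-th root of unity for every $r$ dividing $\expo(\Iv_{x,K}/\Vv_{x,K})$. A standard inverse limit argument over finite normal subextensions reduces us to the case $M/K$ finite. There $K_{\vv,x}/K_{\iv,x}$ is tamely totally ramified Kummer: $K_{\vv,x} = K_{\iv,x}(\pi_1^{1/n_1},\ldots,\pi_r^{1/n_r})$ for elements $\pi_j \in K_{\iv,x}^*$ whose valuations generate $\Delta_x/\Delta_v$. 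Since $K_{\iv,x}/K_{\hv,x}$ is unramified, each $\pi_j$ differs from an element of $K_{\hv,x}^*$ by a unit of $\mathcal{O}_{K_{\iv,x}}$. The hypothesis combined with the presence of $\zeta_{n_j}$ in $\kv_x$ gives $\kv_x^* = (\kv_x^*)^{n_j}$ by Kummer theory, and Hensel's lemma then shows that these units are $n_j$-th powers; so the $\pi_j$ may be chosen in $K_{\hv,x}^*$. The subfield $F = K_{\hv,x}(\pi_1^{1/n_1},\ldots,\pi_r^{1/n_r}) \subseteq K_{\vv,x}$ satisfies $F \cdot K_{\iv,x} = K_{\vv,x}$, and a degree count forces $F \cap K_{\iv,x} = K_{\hv,x}$; then $\Gal(K_{\vv,x}/F) \subseteq \Dv_{x,K}/\Vv_{x,K}$ is the image of the desired section.

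For (2), the case $\pv_v \nmid \ord(\Iv_{x,K})$ (which includes $\pv_v = 1$) gives $\Vv_{x,K} = 1$ by Theorem \ref{1c333}, so the sequence splits trivially. Otherwise $\pv_v = p > 0$ and $\kv_x$ has no cyclic extensions of order $p$; by Artin--Schreier theory, the map $a \mapsto a^p - a$ on $\kv_x$ is surjective. Reducing again to finite $M/K$, I invoke (1) to fix a section $\sigma \colon \Dv_{x,K}/\Iv_{x,K} \to \Dv_{x,K}/\Vv_{x,K}$ and apply Schur--Zassenhaus to split $\Iv_{x,K} = \Vv_{x,K} \rtimes T$. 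It remains to lift $\sigma$ to a section $\widetilde{\sigma} \colon \Dv_{x,K}/\Iv_{x,K} \to \Dv_{x,K}$; then $\widetilde{\sigma}(\Dv_{x,K}/\Iv_{x,K}) \cdot T$ is a complement of $\Vv_{x,K}$ in $\Dv_{x,K}$. Lifting $\sigma$ amounts to splitting the pro-$p$ extension $0 \to \Vv_{x,K} \to \widetilde{H} \to \Dv_{x,K}/\Iv_{x,K} \to 0$, where $\widetilde{H}$ is the preimage in $\Dv_{x,K}$ of $\sigma(\Dv_{x,K}/\Iv_{x,K})$. By d\'evissage through the descending central series of $\Vv_{x,K}$, this reduces to the vanishing of $H^2_c(\Dv_{x,K}/\Iv_{x,K}, V)$ for each $\F_p$-vector-space subquotient $V$ of $\Vv_{x,K}$; each obstruction translates into an Artin--Schreier problem over $\kv_x$ and vanishes by the surjectivity hypothesis.

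I expect the main obstacle to lie in (2), specifically in identifying the obstruction classes to lifting $\sigma$ with Artin--Schreier cocycles over $\kv_x$. This requires careful analysis of the wild ramification structure and how $\Vv_{x,K}$ sits as a pro-$p$ normal subgroup of $\Dv_{x,K}$. Part (1), by contrast, is largely formal: Kummer theory over $K_{\iv,x}$ together with Hensel's lemma.
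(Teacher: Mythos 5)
Your reduction of part (3) to parts (1) and (2) is the same as the paper's and is fine. The issues lie in your treatments of (1) and (2).

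For (1), your Kummer-theoretic route is genuinely different from the paper's (which works in $\overline{M}$, produces a maximal tame totally ramified subfield via Zorn's lemma, and descends the splitting through Lemma \ref{1c1320}). The idea of exhibiting the complement explicitly as $K_{\hv,x}(\pi_1^{1/n_1},\ldots,\pi_r^{1/n_r})$ is attractive, but there are two gaps. First, the reduction to $M/K$ finite by ``a standard inverse limit argument'' is not automatic: a projective limit of short exact sequences, each of which is split, need not be split, because the sections at finite levels need not be compatible; you would have to argue that the (non-empty, finite) sets of complements at each level form a projective system with surjective transition maps. Second, and more seriously, once you are in the finite case the decomposition field $K_{\hv,x}$ is in general \emph{not} Henselian (it is only Henselian when $M$ is the separable closure), so Hensel's lemma does not apply to lift $n_j$-th roots of units. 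The paper avoids both problems by splitting the sequence at the level of the algebraic closure, where the decomposition field really is the Henselization, and then descending; if you want to keep the Kummer picture you should adopt a similar strategy.

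For (2), there is a genuine error. You invoke part (1) to get a section $\sigma$, but the hypothesis of (2) does not imply the hypothesis of (1): (2) assumes $\kv_x$ has no cyclic extensions of degree $\pv_v$, whereas (1) needs no cyclic extensions of prime order dividing $\ord(\Iv_{x,K}/\Vv_{x,K})$ --- and the primes dividing $\ord(\Iv_{x,K}/\Vv_{x,K})$ are precisely the ones \emph{coprime} to $\pv_v$. So (1) is not available here. Moreover, even if you had a complement $S=\widetilde{\sigma}(\Dv_{x,K}/\Iv_{x,K})$ of $\Iv_{x,K}$ in $\Dv_{x,K}$ and a complement $T$ of $\Vv_{x,K}$ in $\Iv_{x,K}$, the product $S\cdot T$ need not be a subgroup: $S$ normalizes $\Iv_{x,K}$ and $\Vv_{x,K}$ but has no reason to normalize $T$, and making $T$ stable under $S$ requires additional argument. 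The d\'evissage/Artin--Schreier idea at the end of your paragraph is actually the heart of the paper's proof, but the paper applies it \emph{directly} to $0 \to \Vv_{x,K} \to \Dv_{x,K} \to \Dv_{x,K}/\Vv_{x,K} \to 0$: one shows $\cd_{\pv_v}(\Dv_{x,K}/\Vv_{x,K}) \le 1$ (using $\cd_{\pv_v}(\Dv_{x,K}/\Iv_{x,K}) \le 1$ via Artin--Schreier and $\cd_{\pv_v}(\Iv_{x,K}/\Vv_{x,K})=0$ by coprimality) and then invokes the general fact that extensions of a profinite group of $\cd_p \le 1$ by pro-$p$ kernels split. If you drop the detour through (1) and Schur--Zassenhaus and apply your cohomological-vanishing argument directly to $\Dv_{x,K}/\Vv_{x,K}$, you recover a correct proof, essentially the paper's.
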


\subsection{Algebraic extensions}

A well-known result in the following (proof on Page \pageref{1cp4}).

\begin{theorem}[Fundamental equality] \label{1c933} \index{fundamental equality}\index{valued field!extension!fundamental equality}
 Let $(K,v)$ be a valued field and let $L/K$ be a finite field extension. Then we have 
\begin{align*}
[L:K] &= \sum_{w|v \textrm{ on }L} \nv(w/v) = \sum_{w|v \textrm{ on }L} \dv(w/v)\ev(w/v)\fv(w/v) \\
 &\geq \sum_{w|v  \textrm{ on }L}
\ev(w/v)\fv(w/v). 
\end{align*}
\end{theorem}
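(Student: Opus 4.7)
The plan is to reduce to a finite normal envelope and then count orbits. I fix a finite normal extension $M/K$ with $L \subseteq M$ (for instance the normal closure of $L/K$ inside $\overline{K}$) and set $G=\Aut_K(M)$, $H=\Aut_L(M)$.

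Next I apply Theorem \ref{1c333} twice. Applied to $M/K$ it identifies the set $S_M$ of valuations of $M$ extending $v$ with the finite transitive $G$\dash set $G/\Dv_{x,K}$, for any chosen $x \in S_M$; in particular extensions exist and $\gv_{M,v}$ is finite. Applied to $M/L$, for each $w$ on $L$ above $v$ and any lift $x$ of $w$ to $M$, the set of valuations of $M$ extending $w$ is a single $H$\dash orbit of size $\gv_{M,w}$.

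The key combinatorial step is that the restriction map $S_M \to \{w \text{ on } L : w|v\}$ has fibres equal to the $H$-orbits on $S_M$, so that
\begin{equation*}
\gv_{M,v} = \sum_{w|v \text{ on } L} \gv_{M,w}.
\end{equation*}
Multiplying both sides by $[L:K]/\gv_{M,v}$ and using the definition $\nv(w/v) = (\gv_{M,w}/\gv_{M,v}) \cdot [L:K]$ yields
\begin{equation*}
[L:K] = \sum_{w|v \text{ on } L} \nv(w/v).
\end{equation*}
The middle equality of the theorem is now just the definitional expansion $\nv(w/v) = \dv(w/v)\ev(w/v)\fv(w/v)$, and the final inequality follows because $\dv(w/v) \in \pv_v^{\Z_{\geq 0}} \subseteq \Z_{\geq 1}$ is at least $1$ in every term.

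The only subtlety is that $\nv(w/v)$ was defined with reference to a choice of $M$; this independence is asserted in Proposition \ref{1c9871}, and in any case one single $M$ is used throughout the argument above. Otherwise everything is a straightforward orbit count on a finite transitive $G$-set, so this is the easiest statement of the section and presents no genuine obstacle once Theorem \ref{1c333} is in hand.
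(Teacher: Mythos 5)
Your proof is correct and follows the same route as the paper's: unpack the definition of $\nv(w/v)=\frac{\gv_{M,w}}{\gv_{M,v}}[L:K]$, observe that $\gv_{M,v}=\sum_{w\mid v}\gv_{M,w}$ by partitioning the set of extensions of $v$ to $M$ according to their restriction to $L$, and finish with the definitional identity $\nv=\dv\ev\fv$ and the fact that $\dv(w/v)\in\pv_v^{\Z_{\geq 0}}\subseteq\Z_{\geq 1}$ from Proposition \ref{1c4444}. The paper's proof is just a one-line pointer to these same definitions, so you have merely spelled out the details.
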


Two algebraic field extensions $L, L'$ of a field $K$ are called \emph{linearly
disjoint}\index{linearly disjoint}\index{field extension!linearly disjoint} over $K$ if $L \otimes_K L'$ is a field.

If $L, L'$ are subfields of a field $\Omega$, then we set the \emph{compositum}\index{compositum} $LL'=\im(L \otimes_{\Z} L' \to \Omega)$. This
is the smallest ring containing both $L$ and $L'$ in $\Omega$. This is a field if the elements of $L$ are algebraic over $L'$ or if the elements of
$L'$ are algebraic over $L$.

The following proposition studies extensions of valuations using fundamental sets (Proof on \pageref{1cp5}). If $L \supseteq K$ and $M \supseteq K$ are extensions of fields, we denote by $\Hom_K(L,M)$\index{$\Hom_K(L,M)$} the set of field homomorphisms from $L$ to $M$ which are the identity on $K$. 

\begin{theorem} \label{1c802} 
Let $(K,v)$ be a valued field and let $L/K$ be an algebraic extension. Let $(M,x) \supseteq (K,v)$ be a normal extension with group $G=\Aut_K(M)$ such
that the $G$-set $X=\Hom_K(L,M)$ is not empty. Then the natural map
\begin{align*}
 \pi \colon X &\to \{w \textrm{ on }L \textrm{ extending } v\} \\
\sigma &\mapsto w \textrm{ s.t. } \mathcal{O}_w=\sigma^{-1} ( \mathcal{O}_x \cap \sigma(L))
\end{align*}
is surjective. Let $\sigma \in X$ and set $w=\pi(\sigma)$ and let $G_{\sigma}$ be the stabilizer in $G$ of $\sigma$. Then we have:
\begin{enumerate}
 \item $w/v$ is immediate $\iff$ $\sigma(L) \subseteq K_{\hv,x}$ $\iff$ $\Dv_{x,K} \subseteq G_{\sigma}$;
 \item $w/v$ is unramified $\iff$ $\sigma(L) \subseteq K_{\iv,x}$ $\iff$ $\Iv_{x,K} \subseteq G_{\sigma}$;
 \item $w/v$ is tame $\iff$ $\sigma(L) \subseteq K_{\vv,x}$ $\iff$ $\Vv_{x,K} \subseteq G_{\sigma}$; 
 \item $w/v$ is local $\iff$ $\sigma(L)$ and $K_{\hv,x}$ are linearly disjoint over $K$ $\iff$ $\Dv_{x,K} \sigma=X$;
 \item $w/v$ is totally ramified $\iff$ $\sigma(L)$ and $K_{\iv,x}$ are linearly disjoint over $K$ $\iff$ $\Iv_{x,K}\sigma=X$;
 \item $w/v$ is totally wild $\iff$ $\sigma(L)$ and $K_{\vv,x}$ are linearly disjoint over $K$ $\iff$ $\Vv_{x,K} \sigma=X$.
\end{enumerate}
Furthermore we have:
\begin{enumerate}
\setcounter{enumi}{6}
 \item $x/w$ is immediate $\iff$ $M=\sigma(L)K_{\hv,x}$ $\iff$ $M/\sigma(L)$ is separable and $G_{\sigma} \cap \Dv_{x,K}=0$;
 \item $x/w$ is unramified $\iff$ $M=\sigma(L)K_{\iv,x}$ $\iff$  $M/\sigma(L)$ is separable and $G_{\sigma} \cap \Iv_{x,K}=0$;
 \item $x/w$ is tame $\iff$ $M=\sigma(L)K_{\vv,x}$ $\iff$  $M/\sigma(L)$ is separable and $G_{\sigma} \cap \Vv_{x,K}=0$;
 \item $x/w$ is local $\iff$ $\sigma(L) \supseteq K_{\hv,x}$ $\iff$ $G_{\sigma} \subseteq \Dv_{x,K}$;
 \item $x/w$ is totally ramified $\iff$ $\sigma(L) \supseteq K_{\iv,x}$ $\iff$ $G_{\sigma} \subseteq \Iv_{x,K}$;
 \item $x/w$ is totally wild $\iff$ $\sigma(L) \supseteq K_{\vv,x}$ $\iff$ $G_{\sigma} \subseteq \Vv_{x,K}$.
\end{enumerate}
Finally we have:
\begin{enumerate}
\setcounter{enumi}{12}
 \item $v$ is totally split in $L$ $\iff$ for all $\sigma \in X$ we have $\sigma(L) \subseteq K_{\hv,x}$ $\iff$ $\Dv_{x,K}$
acts trivially on $X$;
 \item $w$ is totally split in $M$ $\iff$ $M/\sigma(L)$ is separable and only the trivial element of $G_{\sigma}$ is conjugate to an element of $\Dv_{x,K}$.
\end{enumerate}
\end{theorem}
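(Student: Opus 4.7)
The plan is to reduce the algebraic case to finite subextensions, apply Proposition \ref{1c854} to convert each invariant $\dv_{\wv}, \ev_{\tv}, \fv_{\sv}, \gv$ into a degree of a compositum over the appropriate decomposition/inertia/ramification field of $x$, and then translate these geometric identities into subgroup relations via Galois correspondence for $M/K$. The central input is Theorem \ref{1c333}(ii), which identifies the Henselization (resp.\ inertia, ramification) field of $x$ over an intermediate subfield $L' \subseteq M$ with the compositum $L'\cdot K_{\hv,x}$ (resp.\ $L'\cdot K_{\iv,x}$, $L'\cdot K_{\vv,x}$). I first establish surjectivity of $\pi$ together with the fibre description $\pi^{-1}(w) = \Dv_{x,K}\sigma$: for surjectivity, given $w'$ on $L$ extending $v$, pick $\sigma_0\in X$, extend $\sigma_0(w')$ to a valuation $y$ on $M$ via Proposition \ref{1c55}, write $y=gx$ by Theorem \ref{1c333}, and set $\sigma=g^{-1}\sigma_0$; for the fibre, if $\sigma'=g\sigma\in\pi^{-1}(w)$, then $x$ and $gx$ agree on $\sigma(L)$, so Theorem \ref{1c333} applied to $M/\sigma(L)$ yields $h\in G_\sigma$ with $gx=hx$, whence $g\in\Dv_{x,K}G_\sigma$.

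For parts (i)--(iii), pass to a finite intermediate $L'/K$ with $w'=w|_{L'}$. Proposition \ref{1c854} applied to $(\sigma(L'),w')$ as an intermediate of $(M,x)/(K,v)$ gives $[\sigma(L')\cdot K_{\hv,x}:K_{\hv,x}]=\dv_{\wv}(w'/v)\ev_{\tv}(w'/v)\fv_{\sv}(w'/v)$, so $w'/v$ is immediate iff $\sigma(L')\subseteq K_{\hv,x}$; taking the union over all finite $L'$ yields $w/v$ immediate iff $\sigma(L)\subseteq K_{\hv,x}$. The equivalence with $\Dv_{x,K}\subseteq G_\sigma$ is Galois correspondence in $M_{K,\sep}/K$, using that immediacy forces $L/K$ separable. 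Parts (ii) and (iii) follow identically with $(K_{\iv,x},\Iv_{x,K})$ and $(K_{\vv,x},\Vv_{x,K})$. For (iv)--(vi), the condition $\gv_{L,v}=1$ amounts via $\pi$ to $\Dv_{x,K}\sigma=X$, equivalent by orbit--stabilizer to $\Dv_{x,K}G_\sigma=G$; the product identity $|G_\sigma\cap\Dv_{x,K}|\cdot|G_\sigma\Dv_{x,K}|=|G_\sigma|\cdot|\Dv_{x,K}|$ (applied to finite Galois approximations of $M/K$) converts $G_\sigma\Dv_{x,K}=G$ into $[\sigma(L)\cdot K_{\hv,x}:K]=[\sigma(L):K]\cdot[K_{\hv,x}:K]$, i.e., linear disjointness, and the analogous argument with $\Iv_{x,K}$ and $\Vv_{x,K}$ handles (v) and (vi).

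Parts (vii)--(xii) follow by applying (i)--(vi) to the flipped normal extension $(M,x)\supseteq(\sigma(L),w)$, whose automorphism group is $G_\sigma$, whose decomposition group of $x$ over $\sigma(L)$ is $\Dv_{x,K}\cap G_\sigma$ by Theorem \ref{1c333}(i), and whose Henselization, inertia and ramification fields of $x$ are $\sigma(L)K_{\hv,x}$, $\sigma(L)K_{\iv,x}$, $\sigma(L)K_{\vv,x}$ by Theorem \ref{1c333}(ii). The condition $M=\sigma(L)K_{\hv,x}$ then corresponds, via Galois correspondence for $M/\sigma(L)$ (which requires $M/\sigma(L)$ separable), to $G_\sigma\cap\Dv_{x,K}=0$. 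Finally, (xiii) is (i) quantified over all $\sigma\in X$: since the condition $\sigma(L)\subseteq K_{\hv,x}$ is preserved under the $\Dv_{x,K}$-action on $X$, imposing it for every $\sigma\in X$ is equivalent to $\Dv_{x,K}$ acting trivially on $X$. Part (xiv) combines (vii) with the $G$-conjugation structure on $\{x'|v\text{ on }M\}$: letting $x'=gx$ range through all valuations of $M$ extending $v$, unravelling $G_\sigma\cap\Dv_{x',K}=G_\sigma\cap g\Dv_{x,K}g^{-1}=0$ produces exactly the $G$-conjugacy condition in the statement.

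The principal obstacle is the careful management of (in)separability, since the Galois correspondence we repeatedly invoke is only valid for separable extensions. The direction ``$\Dv_{x,K}\subseteq G_\sigma\Rightarrow\sigma(L)\subseteq K_{\hv,x}$'' requires $L/K$ separable, but this is automatic from immediacy in (i)--(iii); similarly, the explicit separability hypothesis in (vii)--(xii) plays the corresponding role for the flipped extension, where purely inseparable elements above $\sigma(L)$ would be fixed by all of $G_\sigma$ and hence obstruct any naive passage from subgroup containments back to field-theoretic containments.
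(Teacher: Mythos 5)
Your proof follows essentially the same route as the paper's: reduce to finite subextensions, translate each invariant $\dv_\wv,\ev_\tv,\fv_\sv,\gv$ into a degree over the appropriate decomposition/inertia/ramification field via Proposition~\ref{1c854}, identify $L'_{\hv,x}=L'K_{\hv,x}$ etc.\ via Theorem~\ref{1c333}(ii), and then invoke Galois theory (plus Proposition~\ref{1c931} for linear disjointness). The main organizational difference is that you reduce along finite intermediate fields $L'$ of $L/K$, whereas the paper reduces along finite normal subextensions $M'$ of $M/K$; your version requires an implicit extra step, since Proposition~\ref{1c854} is only stated for finite normal $M/K$, so one must first pass to some finite normal $M'\supseteq L'$ and then check that the degree $[\sigma(L')K_{\hv,x}:K_{\hv,x}]$ is independent of whether one computes decomposition fields in $M$ or in $M'$. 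This is a real (if small) gap you should fill; the paper's choice of reduction makes it automatic.

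For (iv)--(vi) you replace the paper's appeal to Proposition~\ref{1c931} with a counting identity $\lvert G_\sigma\cap\Dv_{x,K}\rvert\cdot\lvert G_\sigma\Dv_{x,K}\rvert=\lvert G_\sigma\rvert\cdot\lvert\Dv_{x,K}\rvert$ on finite quotients. This is equivalent content, though Proposition~\ref{1c931} also packages the ``take a projective limit'' step that turns the finite-level degree identity into actual linear disjointness of the (possibly infinite) extensions, so citing it directly is cleaner. Your handling of (vii)--(xii) by flipping $K\leftrightarrow\sigma(L)$ and of (xiii)--(xiv) is exactly the paper's.

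One subtlety deserves a comment, because your remark ``immediacy forces $L/K$ separable'' touches it but does not close it. In (i)--(iii), the equivalence ``$\sigma(L)\subseteq K_{\hv,x}\iff\Dv_{x,K}\subseteq G_\sigma$'' is a Galois-correspondence statement inside $M_{K,\sep}/K$, and $\Dv_{x,K}\subseteq G_\sigma$ alone only yields $\sigma(L)_{K,\sep}\subseteq K_{\hv,x}$, not $\sigma(L)\subseteq K_{\hv,x}$; when $\sigma(L)/K$ is inseparable (for instance purely inseparable, so $G_\sigma=G\supseteq\Dv_{x,K}$ trivially) the right-hand group inclusion holds without immediacy. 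Your observation that immediacy forces separability covers the forward direction $\textrm{(immediate)}\Rightarrow(\Dv_{x,K}\subseteq G_\sigma)$ but is circular for the converse. This, however, is already present in the paper's own chain of equivalences, so it is not a defect you introduced. In contrast, (x)--(xii) do not suffer from it, because $K_{\hv,x}\subseteq M_{K,\sep}$ makes $K_{\hv,x}\subseteq\sigma(L)$ and $K_{\hv,x}\subseteq\sigma(L)_{K,\sep}$ literally the same condition.
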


The above proposition has a lot of corollaries. The proof of the first corollary can be found on Page \pageref{1cp6}.

\begin{corollary} \label{1c1432}
Let $(K,v)$ be a valued field and let $L$ and $L'$ be two algebraic extensions of $K$ in some algebraic closure of $K$. Let $x$ be a valuation on
$LL'$
extending $v$ and let $w=x|_L$ and $w'=x|_{L'}$. Then the following statements hold:
\begin{enumerate}
 \item if $w/v$ is immediate, then $x/w'$ is immediate; 
 \item if $w/v$ is unramified, then $x/w'$ is unramified;
 \item if $w/v$ is tame, then $x/w'$ is tame;
 \item if $v$ is totally split in $L$, then $w'$ is totally split in $LL'$. 
\end{enumerate}
\end{corollary}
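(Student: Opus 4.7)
The plan is to reduce everything to Theorem~\ref{1c802} applied to a single large normal extension. Fix an algebraic closure of $K$ containing $L$, $L'$, and the valuation $x$ on $LL'$, let $M \supseteq K$ be the normal closure of $LL'$ inside that algebraic closure, and extend $x$ to a valuation $\tilde{x}$ on $M$ (possible by Proposition~\ref{1c55}). Since $L' \supseteq K$, the extension $M/L'$ is again normal, so we may apply Theorem~\ref{1c802} both to the setup $(K,v) \subseteq (L,w) \subseteq (M,\tilde{x})$ and to $(L',w') \subseteq (LL',x) \subseteq (M,\tilde{x})$. In both instances the $G$-set of embeddings contains the identity inclusion, so the hypothesis $X \neq \emptyset$ is satisfied.

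For (i)--(iii), apply criteria (1)--(3) of Theorem~\ref{1c802} to the inclusion $\iota \colon L \hookrightarrow M$: the hypothesis that $w/v$ is immediate (respectively unramified, tame) is equivalent to
\begin{align*}
L \subseteq K_{\hv,\tilde{x}} \quad (\text{resp. } L \subseteq K_{\iv,\tilde{x}}, \quad L \subseteq K_{\vv,\tilde{x}}).
\end{align*}
Similarly, applying the same criteria to the inclusion $LL' \hookrightarrow M$, now viewed as an $L'$-embedding, $x/w'$ is immediate (resp. unramified, tame) if and only if
\begin{align*}
LL' \subseteq L'_{\hv,\tilde{x}} \quad (\text{resp. } LL' \subseteq L'_{\iv,\tilde{x}}, \quad LL' \subseteq L'_{\vv,\tilde{x}}).
\end{align*}
By the base-change statement in Theorem~\ref{1c333}(ii) we have $L'_{\bullet,\tilde{x}} = K_{\bullet,\tilde{x}} L'$ for $\bullet \in \{\hv,\iv,\vv\}$, so the inclusion $L \subseteq K_{\bullet,\tilde{x}}$ immediately yields $LL' \subseteq K_{\bullet,\tilde{x}} L' = L'_{\bullet,\tilde{x}}$. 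This settles (i)--(iii).

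For (iv), use characterization (13) of Theorem~\ref{1c802}: $v$ is totally split in $L$ if and only if $\sigma(L) \subseteq K_{\hv,\tilde{x}}$ for every $\sigma \in \Hom_K(L,M)$, and $w'$ is totally split in $LL'$ if and only if $\tau(LL') \subseteq L'_{\hv,\tilde{x}} = K_{\hv,\tilde{x}} L'$ for every $\tau \in \Hom_{L'}(LL',M)$. Given such a $\tau$, its restriction $\sigma := \tau|_L$ lies in $\Hom_K(L,M)$ (since $K \subseteq L'$ and $\tau|_{L'} = \id$), so by hypothesis $\tau(L) = \sigma(L) \subseteq K_{\hv,\tilde{x}}$. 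Because $\tau$ fixes $L'$ pointwise, we obtain $\tau(LL') = \tau(L)\tau(L') = \sigma(L)\,L' \subseteq K_{\hv,\tilde{x}} L' = L'_{\hv,\tilde{x}}$, as required.

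There is no real obstacle in the argument; the only point one must verify carefully is the relation $L'_{\bullet,\tilde{x}} = K_{\bullet,\tilde{x}} L'$, which is the content of Theorem~\ref{1c333}(ii). Everything else is a direct translation via Theorem~\ref{1c802}.
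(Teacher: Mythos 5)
Your proof is correct and follows essentially the same route as the paper: extend $x$ to a valuation on a normal extension $M/K$ containing $LL'$, translate the hypothesis and the conclusion into field-theoretic conditions via Theorem~\ref{1c802}, and pass from $K_{\bullet,\tilde{x}}$ to $L'_{\bullet,\tilde{x}}=K_{\bullet,\tilde{x}}L'$ using Theorem~\ref{1c333}(ii). The only substantive difference is that the paper only spells out case~(i) and dismisses the rest as ``similar,'' whereas you carry out (iv) explicitly; your observation that one should restrict an arbitrary $\tau\in\Hom_{L'}(LL',M)$ to $\sigma:=\tau|_L\in\Hom_K(L,M)$ and then apply the hypothesis to $\sigma$ is exactly the extra step that makes (iv) go through, and it is done correctly.
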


The proof of the following corollary can be found on Page \pageref{1cp7}.

\begin{corollary} \label{1c842}
 Let $(L,w) \supseteq (K,v)$ be an algebraic extension of valued fields and let $(K',w')$ be an intermediate extension. Then $w/v$ is immediate
(respectively
unramified, tame, local, totally ramified, totally wild) iff $w/w'$ and $w'/v$ are immediate (respectively unramified, tame, local, totally
ramified, totally wild).
\end{corollary}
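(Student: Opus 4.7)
The plan is to derive all six equivalences in parallel from the characterisations in Theorem \ref{1c802}, applied simultaneously to the three layers of the tower $K\subseteq K'\subseteq L$ inside a single ambient normal extension. Concretely, I fix an algebraic closure of $L$, take $M$ to be the normal closure of $L/K$ inside it, and choose an extension $x$ of $w$ to $M$ (which exists by Proposition \ref{1c55}); then $x$ also extends $w'$ and $v$. Since $K'$ is intermediate in the normal extension $M/K$, the extension $M/K'$ is again normal, so Theorem \ref{1c802} applies to both $(M,x)\supseteq(K,v)$ and $(M,x)\supseteq(K',w')$. Set $G=\Aut_K(M)$, $H=\Aut_{K'}(M)$, let $\sigma\colon L\hookrightarrow M$ be the inclusion, viewed as an element of both $X=\Hom_K(L,M)$ and $Y=\Hom_{K'}(L,M)$ (with $\pi(\sigma)=w$ in either setup), and let $\sigma'=\sigma|_{K'}\in X'=\Hom_K(K',M)$, so $\pi(\sigma')=w'$. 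Note $G_\sigma=\Aut_L(M)$ coincides with the $H$-stabiliser of $\sigma$, while $G_{\sigma'}=H$.

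For \emph{immediate}, \emph{unramified} and \emph{tame}, I use the containment criteria (i)--(iii) of Theorem \ref{1c802} at the three levels, together with the identities $K'_{\hv,x}=K_{\hv,x}K'$, $K'_{\iv,x}=K_{\iv,x}K'$, $K'_{\vv,x}=K_{\vv,x}K'$ furnished by Theorem \ref{1c333}(2). In the immediate case the three conditions read $L\subseteq K_{\hv,x}$, $K'\subseteq K_{\hv,x}$, and $L\subseteq K_{\hv,x}K'$; the equivalence is then trivial, since $K'\subseteq K_{\hv,x}$ forces $K_{\hv,x}K'=K_{\hv,x}$. Unramified and tame are handled verbatim with $K_{\iv,x}$ or $K_{\vv,x}$ in place of $K_{\hv,x}$.

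For \emph{local}, \emph{totally ramified} and \emph{totally wild}, I use the transitivity criteria (iv)--(vi) of Theorem \ref{1c802} together with the identifications $\Dv_{x,K'}=\Dv_{x,K}\cap H$, $\Iv_{x,K'}=\Iv_{x,K}\cap H$, $\Vv_{x,K'}=\Vv_{x,K}\cap H$ from Theorem \ref{1c333}(1). Identifying $X\cong G/G_\sigma$, $X'\cong G/H$, $Y\cong H/G_\sigma$ as homogeneous spaces (transitivity of $G$ on $X$ and of $H$ on $Y$ being standard consequences of normality of $M/K$ and $M/K'$), each property becomes a coset equation, and the whole assertion reduces to the elementary fact: for $G_\sigma\subseteq H\subseteq G$ and any subgroup $N\leq G$,
\[
NG_\sigma=G \iff NH=G \text{ and } (N\cap H)G_\sigma=H,
\]
applied successively with $N=\Dv_{x,K},\Iv_{x,K},\Vv_{x,K}$. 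The forward direction uses $G_\sigma\subseteq H$ to force the $N$-component of any $h\in H$ written $h=ng_0$ with $n\in N,g_0\in G_\sigma$ to lie in $N\cap H$; the reverse is the chain $G=NH=N(N\cap H)G_\sigma=NG_\sigma$ via $N(N\cap H)=N$.

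The main (and mild) obstacle is the simultaneous bookkeeping of the decomposition, inertia and ramification data relative to the two base fields $K$ and $K'$ inside the same $(M,x)$. Once Theorem \ref{1c333} is invoked to express $K'_{*,x}$ as $K_{*,x}K'$ and $\Dv_{x,K'},\Iv_{x,K'},\Vv_{x,K'}$ as intersections with $H$, all six cases collapse into the two templates above.
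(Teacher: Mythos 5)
Your proof is correct, and for the first three properties it is literally the paper's argument: Theorem \ref{1c802}(i)--(iii) plus the identity $K'_{\hv,x}=K'K_{\hv,x}$ (and its analogues) from Theorem \ref{1c333}, with the trivial observation that $K'\subseteq K_{\hv,x}$ forces $K'K_{\hv,x}=K_{\hv,x}$. Where you genuinely diverge is in the local / totally ramified / totally wild cases. The paper stays with the linear-disjointness criterion of Theorem \ref{1c802}(iv)--(vi) and runs a tensor-product computation: $L\otimes_K K_{\hv,x}$ is a domain iff $K'\otimes_K K_{\hv,x}$ and $L\otimes_{K'}K'_{\hv,x}$ are, using $L\otimes_{K'}(K'K_{\hv,x})\cong L\otimes_{K'}K'\otimes_K K_{\hv,x}\cong L\otimes_K K_{\hv,x}$. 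You instead use the orbit criterion $\Dv_{x,K}\sigma=X$ (resp.\ $\Iv$, $\Vv$), identify $X\cong G/G_\sigma$, $X'\cong G/H$, $Y\cong H/G_\sigma$, pull in $\Dv_{x,K'}=\Dv_{x,K}\cap H$ etc.\ from Theorem \ref{1c333}(i), and reduce everything to the elementary fact that for $G_\sigma\subseteq H\subseteq G$ and $N\leq G$ one has $NG_\sigma=G$ iff $NH=G$ and $(N\cap H)G_\sigma=H$; I checked both directions of that lemma and your use of transitivity (valid since $M/K$ and $M/K'$ are normal, and the coset identities are purely set-theoretic, so no topological care is needed beyond what Theorem \ref{1c802} already encapsulates). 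The two routes are close cousins -- Theorem \ref{1c802} records the tensor and orbit criteria as equivalent via Proposition \ref{1c931}, so your coset lemma is essentially the group-theoretic shadow of the paper's tensor associativity -- but yours makes the "multiplicativity in towers" of these properties transparent as a one-line statement about products of subgroups, at the cost of setting up the three homogeneous-space identifications, while the paper's version avoids those identifications entirely. Your choice of $M$ as the normal closure with $x$ extending $w$ is also fine; the paper merely takes any normal $(M,x)$ containing $(L,w)$.
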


The proof of the following proposition can be found on Page \pageref{1cp2}.

\begin{proposition} \label{1c432}
 Let $\Omega$ be a field and let $L,L' \subseteq \Omega$ be subfields. Then there is a subfield $M$ of $L$ such that for all subfields $F$
of $L$ the natural map $L \otimes_F (L'F) \to LL'$ is an isomorphism
iff $M \subseteq F$. Furthermore, $M$ can be described in the following two ways, where $\F$ is the prime of field of $\Omega$.
\begin{enumerate}
 \item Let $\mathfrak{B} \subseteq L'$ be a basis of $LL'$ over $L$. For $b \in \mathfrak{B}$ and $x \in L'$ write $x=\sum_{b \in
\mathfrak{B}}c_{x,b}b$ with $c_{x,b} \in L$ almost all zero. Then one has $M=\F(c_{x,b}: x \in L',\ b \in \mathfrak{B})$. 
 \item Set	
\begin{align*}
S &=\{c \in L: \exists I \subseteq L' \textrm{ finite, ind. over }L \textrm{ and } (c_i)_{i \in I} \in L^I \\
&\ \ \ \textrm{ s.t. } \exists i \in I
	\textrm{ s.t.
}c_i=c \textrm{ and } \sum_{i \in I} c_i i \in L'\}.
\end{align*}
Then one has $M=\F(S)$.
\end{enumerate}
\end{proposition}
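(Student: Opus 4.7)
The plan is to reduce to a single cleanly characterizable injectivity condition, prove that characterization in terms of the coefficients $c_{x,b}$ (yielding description (i)), and then show description (ii) cuts out the same subfield.

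The map $L \otimes_F (L'F) \to LL'$ is automatically surjective: since $L'$ is multiplicatively closed, $LL' = L \cdot L'$ is the $L$-span of $L'$ in $\Omega$, and the image already contains $L'$. So the question is injectivity. By Zorn, I pick a maximal $L$-linearly independent subset $\mathfrak{B} \subseteq L'$; by maximality it spans $L'$, hence spans $LL'$, as an $L$-module. For $x \in L'$ write $x = \sum_b c_{x,b}b$ uniquely, and set $M_1 = \F(c_{x,b} : x \in L',\ b \in \mathfrak{B})$. The central claim is
\[
L \otimes_F (L'F) \to LL' \text{ is an isomorphism} \iff c_{x,b} \in F \text{ for all } x,b \iff M_1 \subseteq F.
\]

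For $\Leftarrow$, the assumption $c_{x,b} \in F$ shows $L'$ is spanned by $\mathfrak{B}$ over $F$; multiplying out, $L'F$ itself is the $F$-span of $\mathfrak{B}$. Since $L$-independence implies $F$-independence, $\mathfrak{B}$ is an $F$-basis of $L'F$, and the map becomes the tautological $\bigoplus_b L \to \bigoplus_b L\cdot b = LL'$. For $\Rightarrow$, choose an $F$-basis $\{e_\alpha\}$ of $L$ with $e_0 = 1$. The isomorphism gives a decomposition $LL' = \bigoplus_\alpha e_\alpha(L'F)$ in which components are unique. Expand $c_{x,b} = \sum_\alpha f_{x,b,\alpha}e_\alpha$ with $f_{x,b,\alpha} \in F$; comparing
\[
x = e_0 \cdot x \quad\text{and}\quad x = \sum_b c_{x,b}b = \sum_\alpha e_\alpha \Bigl(\sum_b f_{x,b,\alpha} b\Bigr),
\]
the uniqueness forces $\sum_b f_{x,b,\alpha} b = 0$ in $L'F$ for every $\alpha \neq 0$; the $F$-independence of $\mathfrak{B}$ then forces each $f_{x,b,\alpha} = 0$, so $c_{x,b} = f_{x,b,0} \in F$. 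This proves the central claim and hence existence of $M$ together with description (i).

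For description (ii), set $M_2 = \F(S)$ and I show $M_1 = M_2$. One inclusion is by inspection: for each fixed $x \in L'$, the finite support $I = \{b \in \mathfrak{B} : c_{x,b} \neq 0\}$ is $L$-independent in $L'$ and $\sum_{b \in I} c_{x,b} b = x \in L'$, so every $c_{x,b}$ lies in $S$, giving $M_1 \subseteq M_2$. For the reverse, it suffices to show that whenever $L \otimes_F (L'F) \cong LL'$ one has $S \subseteq F$ (then apply this to $F = M_1$). So take $c = c_{i_0} \in S$ with the prescribed data $I$, $(c_i)_{i \in I}$, $y = \sum c_i i \in L'$. Both $1 \otimes y$ and $\sum c_i \otimes i$ lie in $L \otimes_F (L'F)$ and map to $y$, hence are equal by injectivity. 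Expanding $c_i = \sum_\alpha f_{i,\alpha}e_\alpha$ and comparing components in $\bigoplus_\alpha e_\alpha \otimes (L'F)$, the $\alpha \neq 0$ components give $\sum_i f_{i,\alpha} i = 0$ in $L'F$, and the $L$-independence (hence $F$-independence) of $I$ forces $f_{i,\alpha} = 0$. Therefore $c = f_{i_0,0} \in F$, as required.

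The only real technical hurdle is managing, in the $\Rightarrow$ direction of the central claim and again in description (ii), two different direct-sum decompositions of $LL'$ simultaneously — namely via the $L$-basis $\mathfrak{B} \subseteq L'$ and via the $F$-basis $\{e_\alpha\}$ of $L$ — and extracting the conclusion from $L$-independence of $\mathfrak{B}$ (respectively $I$), which is then used merely as $F$-independence. Everything else is essentially bookkeeping.
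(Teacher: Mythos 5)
Your proof is correct and follows essentially the same approach as the paper: both characterize when $L \otimes_F (L'F) \to LL'$ is an isomorphism by the condition that every coefficient $c_{x,b}$ lies in $F$, and both derive description (ii) by varying over finite $L$-independent subsets of $L'$. The main difference is one of bookkeeping and explicitness: the paper asserts the key equivalence (``$F\in\mathfrak{L}$ iff $\mathfrak{B}$ spans $L'F$ over $F$'') as following ``directly from the definitions,'' implicitly working with an $F$-basis of $L'F$ extending $\mathfrak{B}$, while you fill in the argument by choosing an $F$-basis $\{e_\alpha\}$ of $L$ and reading off components in the induced direct-sum decomposition of $LL'$ — a dual but entirely equivalent device.
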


\begin{definition}
 The field $M$ in the above theorem is called the \index{field of definition}\emph{field of definition} of $L'$ over $L$ and is denoted by $L \fod
L'$\index{$\fod$}. 
\end{definition}

The proof of the following theorem can be found on Page \pageref{1cp8}.

\begin{theorem} \label{1c803}
 Let $(L,w) \supseteq (K,v)$ be an algebraic extension of valued fields. Then then following statements hold:
\begin{enumerate}
 \item There is a unique maximal subextension $L_1$ of $L/K$ such that $w|_{L_1}/v$ is immediate (respectively $L_2$ for unramified and $L_3$ for
tame).
 \item There is a unique minimal subextension $L_4$ of $L/K$ such that $w/w|_{L_4}$ is local (respectively $L_5$ for totally ramified and $L_6$ for
totally wild).
\end{enumerate}
We have the following diagram of inclusions:
\[
\xymatrix@=0.4em{
 & & & L\\
 & & L_6 \ar@{-}[ru]& \\
 & L_5 \ar@{-}[ru]& & \ar@{-}[lu] L_3\\
 L_4 \ar@{-}[ru]& & L_2 \ar@{-}[lu] \ar@{-}[ru] & \\
 & L_1 \ar@{-}[lu]\ar@{-}[ru] & & \\
K \ar@{-}[ru] & & & &
}  
\]

For any $(M,x) \supseteq (L,w) \supseteq (K,v)$ extension of valued fields where $M/K$ is normal, we have
$L_1=K_{\hv,x} \cap L$, $L_2=K_{\iv,x} \cap L$ and $L_3=K_{\vv,x} \cap
L$, $L_4=L_{K,\sep} \fod K_{\hv,x}$,
$L_5=L_{K,\sep} \fod K_{\iv,x}$ and $L_6=L_{K,\sep} \fod K_{\vv,x}$.

If there is a normal extension $M/K$ containing $L$ such that $\gv_{M,w}=1$, then $L_1=L_4$, $L_2=L_5$ and $L_3=L_6$.
\end{theorem}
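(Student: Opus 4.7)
The plan is to apply Theorem \ref{1c802} with two different choices of base extension and combine this with the field-of-definition construction of Proposition \ref{1c432}. Throughout, I would fix a normal extension $(M,x) \supseteq (L,w) \supseteq (K,v)$ with $M/K$ normal---for instance, take $M$ to be the normal closure of $L/K$ in an algebraic closure, equipped with any extension $x$ of $w$ to $M$ (which exists by Proposition \ref{1c55}).

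For part (1), given any intermediate extension $L'$ of $L/K$, I would apply Theorem \ref{1c802} to $(L',w|_{L'}) \supseteq (K,v)$ inside $(M,x) \supseteq (K,v)$, taking $\sigma \colon L' \hookrightarrow M$ to be the inclusion. Parts (i)--(iii) translate the immediate, unramified, and tame properties of $w|_{L'}/v$ directly into $L' \subseteq K_{\hv,x}$, $L' \subseteq K_{\iv,x}$, and $L' \subseteq K_{\vv,x}$ respectively. The corresponding collections of $L'$ therefore admit as maxima $L_1 = L \cap K_{\hv,x}$, $L_2 = L \cap K_{\iv,x}$, and $L_3 = L \cap K_{\vv,x}$, and these are themselves intermediate extensions with the given property.

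For part (2), given an intermediate extension $L_4$, I would instead apply Theorem \ref{1c802} to the extension $(L,w) \supseteq (L_4,w|_{L_4})$ inside the still-normal extension $(M,x) \supseteq (L_4,w|_{L_4})$, with $\sigma \colon L \hookrightarrow M$ the inclusion. By Theorem \ref{1c333}(ii) we have $(L_4)_{\hv,x} = L_4 K_{\hv,x}$, so parts (iv)--(vi) yield that $w/w|_{L_4}$ is local (resp.\ totally ramified, totally wild) iff $L \otimes_{L_4} L_4 K_{\hv,x}$ (resp.\ with $K_{\iv,x}$, $K_{\vv,x}$) is a field. Since $K_{\hv,x}/K$ is separable, so is $L_4 K_{\hv,x}/L_4$, and this condition is equivalent to the natural map $L_{K,\sep} \otimes_{L_4} L_4 K_{\hv,x} \to L_{K,\sep} K_{\hv,x}$ being an isomorphism. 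Proposition \ref{1c432} then identifies the minimal admissible $L_4$ with the field of definition $L_{K,\sep} \fod K_{\hv,x}$, and the analogous arguments yield $L_5 = L_{K,\sep} \fod K_{\iv,x}$ and $L_6 = L_{K,\sep} \fod K_{\vv,x}$.

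The inclusions $L_1 \subseteq L_2 \subseteq L_3$ and $L_4 \subseteq L_5 \subseteq L_6$ follow from $K_{\hv,x} \subseteq K_{\iv,x} \subseteq K_{\vv,x}$ together with monotonicity of $L \cap {-}$ and $L_{K,\sep} \fod {-}$; the cross inclusions $L_i \subseteq L_{i+3}$ reduce to the observation that each $L_i$ lies in $L_{K,\sep}$ and trivially satisfies the field-of-definition condition. Finally, if $\gv_{M,w}=1$, then $x$ is the unique extension of $w$ to $M$, so $M/L$ is local; this degree collapse forces $L_{K,\sep} \fod K_{\hv,x} = L \cap K_{\hv,x}$, and similarly for the other two pairs, giving $L_i = L_{i+3}$. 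The main obstacle is the algebraic step in part (2): faithfully translating between the ``tensor product is a field'' criterion coming from Theorem \ref{1c802} and the ``isomorphism of tensor-product maps'' criterion of Proposition \ref{1c432}, which requires careful handling of the purely inseparable piece $L/L_{K,\sep}$ and explains why $L_{K,\sep}$ rather than $L$ appears on the left of $\fod$.
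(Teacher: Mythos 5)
Your overall strategy is the same as the paper's (Theorem \ref{1c802} for the maximal subextensions, Theorem \ref{1c802} plus Proposition \ref{1c432} for the minimal ones), and part (1) is fine, but two steps you wave at are genuine gaps. First, in part (2) your pivotal equivalence ``$L \otimes_{L_4} L_4K_{\hv,x}$ is a field iff $L_{K,\sep}\otimes_{L_4} L_4K_{\hv,x} \to L_{K,\sep}K_{\hv,x}$ is an isomorphism'' does not even parse unless $L_4 \subseteq L_{K,\sep}$, and Proposition \ref{1c432} only quantifies over subfields of $L_{K,\sep}$; so to identify the minimum over \emph{all} intermediate fields of $L/K$ you must first show that any admissible $L'$ may be replaced by $L'_{K,\sep} \subseteq L_{K,\sep}$, and that admissibility for $L/F$ and for $L_{K,\sep}/F$ agree. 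This is exactly the purely inseparable reduction you defer as ``the main obstacle'': it is supplied by the fact that purely inseparable extensions of valued fields have a unique extension of the valuation, hence are local/totally ramified/totally wild (Proposition \ref{1c55}, Lemma \ref{1c72}), combined with multiplicativity in towers (Corollary \ref{1c842}); without it the argument is incomplete. Relatedly, your justification of the cross inclusions ($L_i$ ``trivially satisfies the field-of-definition condition'') is at best ambiguous and, read literally, proves the wrong inclusion ($L_4 \subseteq L_1$ would follow from $L_1$ being admissible, which is false in general since $w/w|_{L_1}$ need not be local). A correct one-line fix is $L_1 = L \cap K_{\hv,x} = L_{K,\sep} \cap K_{\hv,x} \subseteq L_{K,\sep} \fod K_{\hv,x} = L_4$, using the lemma that $L \cap L' \subseteq L \fod L'$; the paper instead argues via Corollary \ref{1c1432} and Remark \ref{1c754}.

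Second, the final assertion (if $\gv_{M,w}=1$ then $L_i = L_{i+3}$) is not proved by saying ``this degree collapse forces'' the equality; nothing you wrote uses the hypothesis in a checkable way. The paper's argument translates $\gv_{M,w}=1$ into $H=\Aut_L(M) \subseteq \Dv_{x,K}$ via Theorem \ref{1c802}, reduces the claim to the surjectivity of $\Dv_{x,K} \to \langle H,\Dv_{x,K}\rangle/H$, $\Iv_{x,K} \to \langle H,\Iv_{x,K}\rangle/H$ and $\Vv_{x,K} \to \langle H,\Vv_{x,K}\rangle/H$, and then uses the normality of $\Iv_{x,K}$ and $\Vv_{x,K}$ in $\Dv_{x,K}$ (Theorem \ref{1c333}) so that $H\Iv_{x,K}$ and $H\Vv_{x,K}$ are already groups. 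That normality is the essential input for the $\Iv$ and $\Vv$ cases and is entirely absent from your sketch; you would need to add this (or an equivalent transitivity argument) to close the proof.
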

 
The proof of the following corollary can be found on Page \pageref{1cp9}.
 
\begin{corollary} \label{1c14324}
Let $(K,v)$ be a valued field and let $L$ and $L'$ be two algebraic extensions contained in an algebraic extension $L''$ of $K$. Let $x$ be a valuation
on
$L''$
extending $v$ and let $w=x|_L$ and $w'=x|_{L'}$. Assume that $K=L \cap L'$ and there exists a normal extension $M/K$ containing $LL'$ such that $\gv_{M,w}=1$.
Then the following statements hold:
\begin{enumerate}
 \item if $x/w'$ is local, then $w/v$ is local; 
 \item if $x/w'$ is totally ramified, then $w/v$ is totally ramified;
 \item if $x/w'$ is totally wild, then $w/v$ is totally wild.
\end{enumerate}
\end{corollary}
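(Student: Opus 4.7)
The plan is to give a uniform argument for all three parts based on Theorem \ref{1c803}, Corollary \ref{1c1432}, and the fundamental equality (Theorem \ref{1c933}). First I fix the setup: because $\gv_{M,w}=1$, the unique extension $x_M$ of $w$ to $M$ restricts to the unique extension of $w$ to $LL'$, so $x_M|_{LL'}=x|_{LL'}$; in particular $x_M|_{L'}=w'$. The hypothesis of the last assertion of Theorem \ref{1c803} is therefore met, yielding $L_1=L_4$, $L_2=L_5$, $L_3=L_6$ for $L/K$. Since $L_4$ (resp.\ $L_5$, $L_6$) is by definition the minimal subextension of $L/K$ over which $w$ becomes local (resp.\ totally ramified, totally wild), the property that $w/v$ itself is local (resp.\ totally ramified, totally wild) is equivalent to $L_1=K$ (resp.\ $L_2=K$, $L_3=K$).

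I argue case (i) by contradiction. Suppose $L_1\supsetneq K$. By definition of $L_1$ (Theorem \ref{1c803}(i)), the extension $(L_1,w|_{L_1})/(K,v)$ is immediate. Corollary \ref{1c1432}(i), applied to the algebraic extensions $L_1$ and $L'$ of $K$ with the common valuation $x|_{L_1L'}$ on $L_1L'$, yields that $(x|_{L_1L'})/w'$ is immediate. Since $L_1L'\subseteq LL'\subseteq L''$, the hypothesis that $x/w'$ is local descends to $(x|_{L_1L'})/w'$. But a finite extension of valued fields which is simultaneously immediate and local is trivial: locality forces $\gv=1$, so the fundamental equality reads $[F:L']=\nv(x|_F/w')=1$ by immediateness, for every finite intermediate $F/L'$ in $L_1L'/L'$. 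Therefore $L_1L'=L'$, so $L_1\subseteq L\cap L'=K$, contradicting $L_1\supsetneq K$.

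Cases (ii) and (iii) follow the same pattern. For (ii), one replaces $L_1$ by $L_2$ and applies Corollary \ref{1c1432}(ii) to obtain that $(x|_{L_2L'})/w'$ is unramified, which combined with the hypothesis that it is totally ramified forces $\dv_{\wv}=\ev_{\tv}=\fv_{\sv}=\gv=1$ on every finite subextension, hence $\ev=\fv=\dv=\gv=1$ and triviality by the fundamental equality. Case (iii) is identical, combining \emph{tame} ($\dv_{\wv}=1$) with \emph{totally wild} ($\ev_{\tv}=\fv_{\sv}=\gv=1$). The only subtlety is the possibly infinite character of the $L_i$, but this is harmless because Definition \ref{1c9000} and Corollary \ref{1c1432} are both phrased in terms of finite subextensions.
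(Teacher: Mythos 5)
Your proof is correct and follows essentially the same route as the paper: both use $\gv_{M,w}=1$ to invoke the last statement of Theorem \ref{1c803} (giving $L_1=L_4$, $L_2=L_5$, $L_3=L_6$), push the maximal immediate (resp.\ unramified, tame) subextension of $L/K$ over $L'$ via Corollary \ref{1c1432}, and use $L\cap L'=K$ together with the fundamental equality to force that subextension to be trivial. The only cosmetic difference is that you re-derive from Theorem \ref{1c933} the fact that a nontrivial extension cannot be simultaneously immediate and local (resp.\ unramified and totally ramified, tame and totally wild), which the paper has already recorded as Remark \ref{1c754}.
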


The proof of the following proposition can be found on Page \pageref{1cp10}.	

\begin{proposition} \label{1c4343}
Let $(K,v)$ be a valued field and let $L$ be a finite separable algebraic extension of $K$. Let $(M,x) \supseteq (K,v)$ be a finite normal
extension of valued fields with group $G=\Aut_K(M)$ such that the $G$-set $X=\Hom_K(L,M)$ is not empty.
Then the map 
\begin{align*}
 \varphi \colon \Dv_{x,K} \backslash X &\to \{w \textrm{ of }L\textrm{ extending }v\} \\
\Dv_{x,K} s &\mapsto w \textrm{ s.t. } \mathcal{O}_w=\sigma^{-1} ( \mathcal{O}_x \cap \sigma(L))
\end{align*}
is a bijection of sets. If $\varphi(\Dv_{x,K} s)=w$ we have:
\begin{enumerate}
 \item $\# \Dv_{x,K}s = \dv_{\wv}(w/v)\ev_{\tv}(w/v) \fv_{\sv}(w/v)= \nv(w/v)$;
 \item the number of orbits under $\Iv_{x,K}$ of $\Dv_{x,K} s$ is equal to $\fv_{\sv}(w/v)$ and each orbit has length $\dv_{\wv}(w/v)\ev_{\tv}(w/v)$;
 \item the number of orbits under $\Vv_{x,K}$ of $\Dv_{x,K} s$ is equal to $\ev_{\tv}(w/v) \fv_{\sv}(w/v)$ and each orbit has length $\dv_{\wv}(w/v)$.
\end{enumerate}
\end{proposition}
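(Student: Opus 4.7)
The plan is to first check that $\varphi$ is well-defined and bijective, and then use orbit–stabilizer and Proposition~\ref{1c854} to compute the orbit sizes and counts.

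For the bijection, the surjectivity of $\pi$ is Theorem~\ref{1c802}, so it suffices to show $\pi(\sigma)=\pi(\sigma')$ iff $\Dv_{x,K}\sigma=\Dv_{x,K}\sigma'$. Since $M/K$ is normal, any $\sigma\in X$ extends to some $g\in G$, and a direct computation yields $\pi(g|_L)=(g^{-1}x)|_L$. If $\sigma'=d\sigma$ with $d\in\Dv_{x,K}$, writing $\sigma=g|_L$ one has $\sigma'=(dg)|_L$ and $(dg)^{-1}x=g^{-1}d^{-1}x=g^{-1}x$, so $\pi(\sigma')=\pi(\sigma)$. Conversely, if $\pi(g|_L)=\pi(g'|_L)$, then $g^{-1}x$ and $(g')^{-1}x$ are two valuations of $M$ restricting to the same valuation of $L$; since $M/L$ is normal (because $M/K$ is), Theorem~\ref{1c333} applied to $(M,x)/(L,(g^{-1}x)|_L)$ says $\Aut_L(M)$ acts transitively on valuations over $(g^{-1}x)|_L$, so $(g')^{-1}x=hg^{-1}x$ for some $h\in\Aut_L(M)$. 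Then $d:=g'hg^{-1}\in\Dv_{x,K}$ satisfies $d\sigma=(dg)|_L=(g'h)|_L=g'|_L=\sigma'$.

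Next, fix $\sigma$ with $w=\pi(\sigma)$ and set $L'=\sigma(L)\subseteq M$. The stabilizer of $\sigma$ in $G$ is $\Aut_{L'}(M)$, so by Theorem~\ref{1c333}(i) its intersections with $\Dv_{x,K}$, $\Iv_{x,K}$, $\Vv_{x,K}$ are $\Dv_{x,L'}$, $\Iv_{x,L'}$, $\Vv_{x,L'}$. Orbit–stabilizer thus gives $|\Dv_{x,K}\sigma|=[\Dv_{x,K}:\Dv_{x,L'}]$ and analogously for $\Iv$ and $\Vv$. Since $L'/K$ is separable, $L'\subseteq M_{K,\sep}$, and combining Theorem~\ref{1c333}(ii) with the Galois correspondence for $M_{K,\sep}/K$ identifies these indices with $[L'_{\hv,x}:K_{\hv,x}]$, $[L'_{\iv,x}:K_{\iv,x}]$, $[L'_{\vv,x}:K_{\vv,x}]$; indeed for example $\Dv_{x,L'}$ is the subgroup of $\Gal(M_{K,\sep}/K)$ fixing both $L'$ and $K_{\hv,x}$, hence fixing $L'K_{\hv,x}=L'_{\hv,x}$. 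By Proposition~\ref{1c854} these three degrees are
\begin{align*}
[L'_{\hv,x}:K_{\hv,x}]&=\dv_{\wv}(w/v)\ev_{\tv}(w/v)\fv_{\sv}(w/v),\\
[L'_{\iv,x}:K_{\iv,x}]&=\dv_{\wv}(w/v)\ev_{\tv}(w/v),\\
[L'_{\vv,x}:K_{\vv,x}]&=\dv_{\wv}(w/v),
\end{align*}
and the first equals $\nv(w/v)$ by the identities in Proposition~\ref{1c9871}. This proves (1) and gives the common orbit sizes in (2) and (3).

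To finish (2) and (3), I note that all $\Iv_{x,K}$-orbits (resp.\ all $\Vv_{x,K}$-orbits) on $\Dv_{x,K}\sigma$ have the same cardinality, so the number of orbits equals $|\Dv_{x,K}\sigma|/|\Iv_{x,K}\sigma|=\fv_{\sv}(w/v)$ and $|\Dv_{x,K}\sigma|/|\Vv_{x,K}\sigma|=\ev_{\tv}(w/v)\fv_{\sv}(w/v)$ respectively. This uniformity comes from the normality of $\Iv_{x,K}$ and $\Vv_{x,K}$ inside $\Dv_{x,K}$, which is built into Theorem~\ref{1c333} since both appear as kernels of homomorphisms out of $\Dv_{x,K}$. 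The main obstacle I anticipate is the injectivity half of the bijection: one has to use Theorem~\ref{1c333} for the extension $(M,x)/(L,w)$ (not for $M/K$) in order to convert the agreement of two restricted valuations into an element of $\Dv_{x,K}$; once that is in hand, the rest is orbit–stabilizer bookkeeping and direct application of Proposition~\ref{1c854}.
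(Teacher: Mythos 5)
Your proof is correct and takes essentially the same route as the paper: the paper quotes Proposition~\ref{1c900} (whose bijection is proved via exactly the transitivity of $\Aut_{\sigma(L)}(M)$ that you invoke) together with Proposition~\ref{1c854}, while you replace the Hom-set bijections of Proposition~\ref{1c900} with orbit--stabilizer, Theorem~\ref{1c333}(i,~ii) and the Galois correspondence, arriving at the same degree computations. The only cosmetic point is that one should fix a base embedding $\sigma_0\in X$ before writing $\sigma=g\sigma_0$, since $L$ need not literally be a subfield of~$M$.
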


The proof of the following corollary can be found on Page \pageref{1cp11}

\begin{corollary} \label{1c388}
Let $(K,v)$ be a valued field and let $L$ be a finite algebraic extension of $K$. Let $(M,x) \supseteq (K,v)$ be a finite normal extension of valued fields with group $G=\Aut_K(M)$ such that the $G$-set $X=\Hom_K(L,M)$ is not empty. Then the cardinality of the set of valuations $w$ on
$L$ extending $v$ such that $\fv_{\sv}(w/v)=1$ is equal to $\# \left(\Iv_{x,K} \backslash X \right)^{\Dv_{x,K}/\Iv_{x,K}}$.
\end{corollary}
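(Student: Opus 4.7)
The plan is to reduce to the separable case handled by Proposition \ref{1c4343} and then read off from part (ii) of that proposition that the condition $\fv_{\sv}(w/v)=1$ corresponds exactly to a $\Dv_{x,K}$-orbit on $X$ being a single $\Iv_{x,K}$-orbit, i.e.\ a $\Dv_{x,K}/\Iv_{x,K}$-fixed point on $\Iv_{x,K}\backslash X$.

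First I would pass from $L$ to $L' := L_{K,\sep}$, so that $L/L'$ is purely inseparable. Restriction yields a $G$-equivariant map $X \to X' := \Hom_K(L',M)$; this is a bijection because purely inseparable extensions admit unique field embeddings, and because $M/K$ is normal one checks surjectivity by fixing $\tau \in X$ (nonempty by hypothesis), writing an arbitrary $\sigma' \in X'$ as $g \circ \tau|_{L'}$ for some $g \in G$, and observing that $g \circ \tau \in X$ extends $\sigma'$. Restriction likewise puts the valuations of $L$ extending $v$ in bijection with those of $L'$ extending $v$, since valuations extend uniquely through purely inseparable extensions. Finally, if $w$ on $L$ restricts to $w'$ on $L'$, then $\fv_{\sv}(w/v) = \fv_{\sv}(w/w')\fv_{\sv}(w'/v) = \fv_{\sv}(w'/v)$ by multiplicativity (Proposition \ref{1c9871}) and because $\kv_w/\kv_{w'}$ is purely inseparable (any lift to $\Oc_w$ has a $\pv_v$-power in $L'$). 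Thus both sides of the asserted equality are unchanged, and we may assume $L/K$ is separable.

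For separable $L/K$, Proposition \ref{1c4343} gives a bijection $\Dv_{x,K} \backslash X \to \{w \text{ on }L \text{ extending }v\}$; if $\Dv_{x,K} s \mapsto w$, part (ii) says that the number of $\Iv_{x,K}$-orbits contained in $\Dv_{x,K} s$ equals $\fv_{\sv}(w/v)$. Hence $\fv_{\sv}(w/v)=1$ iff $\Dv_{x,K} s = \Iv_{x,K} s$. Since $\Iv_{x,K}$ is normal in $\Dv_{x,K}$ (by Theorem \ref{1c333}), the group $\Dv_{x,K}/\Iv_{x,K}$ acts on $\Iv_{x,K}\backslash X$, and an orbit $\Iv_{x,K} s$ is fixed precisely when $\Dv_{x,K} s = \Iv_{x,K} s$. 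Combining these, the valuations $w$ on $L$ extending $v$ with $\fv_{\sv}(w/v)=1$ are in bijection with the $\Dv_{x,K}/\Iv_{x,K}$-fixed points of $\Iv_{x,K}\backslash X$, giving the claimed cardinality.

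The main obstacle is the bookkeeping in the reduction step: one must verify that passing from $L$ to $L_{K,\sep}$ preserves $X$ as a $G$-set, the set of valuations extending $v$, and the invariant $\fv_{\sv}$. Once this is in place the corollary is a short orbit-counting consequence of Proposition \ref{1c4343}(ii), with no further analytic or algebraic input required.
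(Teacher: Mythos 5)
Your proof is correct and follows essentially the same route as the paper: the heart of the argument is Proposition \ref{1c4343}(ii), which identifies $\fv_{\sv}(w/v)=1$ with the corresponding $\Dv_{x,K}$-orbit being a single $\Iv_{x,K}$-orbit, i.e.\ a fixed point of $\Dv_{x,K}/\Iv_{x,K}$ on $\Iv_{x,K}\backslash X$. The only difference is that you spell out the reduction from a general finite $L$ to $L_{K,\sep}$ (via uniqueness of embeddings and of valuation extensions through purely inseparable extensions, and $\fv_{\sv}$-invariance by Lemma \ref{1c72}), a step the paper's one-line proof leaves implicit even though Proposition \ref{1c4343} is stated only for separable $L$ -- so your version is, if anything, more careful.
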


\section{Preliminaries}

\subsection{Field theory}

\subsubsection{Linearly disjoint extensions} \label{1c770}

Let $\Omega$ be a field and let $L,L' \subseteq \Omega$. We set $LL'=\im\left(L
\otimes_{\Z} L' \to \Omega\right)$\index{$LL'$}, that is, the smallest ring containing both $L$ and $L'$. This is a field if the elements of $L$ are
algebraic over $L'$ or if the elements of $L'$ are algebraic over $L$.

Two algebraic field extensions $L, L'$ of a field $K$ are called \emph{linearly disjoint}\index{linearly disjoint}\index{field extension!linearly
disjoint} over $K$ if $L \otimes_K L'$ is a field. This holds if and only if all pairs of finite subextensions of $L/K$ respectively $L'/K$ are
linearly disjoint over $K$.

Let $M/K$ is a normal field extension with group $G=\Aut_K(M)$. Then the latter group is a topological group with the topology coming from viewing $G
\subset M^M$ where $M$ has the discrete topology and $M^M$ the product topology.

\begin{lemma} \label{1c633}
 Let $M/K$ be a normal extension of fields with group $G=\Aut_K(M)$ and let $L, L'$ be two intermediate extensions. Put $H=\Aut_L(M)$ and
$H'=\Aut_{L'}(M)$. Then one has: $\overline{\langle H, H' \rangle}=G$ iff $L \cap L'$ over $K$ is purely inseparable.
\end{lemma}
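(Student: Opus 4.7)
The plan is to reduce to the classical Galois correspondence for a Galois extension and then handle the inseparable part by a Frobenius trick. Set $F = M^G$. Since $M/K$ is normal, $F$ is the purely inseparable closure of $K$ in $M$ and $M/F$ is a Galois extension with group $G$. Every element of $H = \Aut_L(M)$ lies in $G$ and therefore fixes $F$ automatically, so $H = \Aut_{LF}(M)$; similarly $H' = \Aut_{L'F}(M)$. The Galois correspondence for $M/F$, applied to these closed subgroups, then gives $M^H = LF$ and $M^{H'} = L'F$, and hence identifies $\overline{\langle H, H' \rangle}$ with the closed subgroup whose fixed field is $M^H \cap M^{H'} = LF \cap L'F$. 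The statement therefore reduces to the equivalence
\begin{equation*}
LF \cap L'F = F \iff L \cap L' \text{ is purely inseparable over } K.
\end{equation*}

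The forward direction is essentially free: $L \cap L' \subseteq LF \cap L'F = F$, and every element of $F$ is purely inseparable over $K$ by the very definition of the purely inseparable closure.

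For the converse the tool is Frobenius. If $\chart(K) = 0$ then $F = K$ and the statement is just the classical Galois correspondence, so I would assume $\chart(K) = p > 0$. Given $\beta \in LF \cap L'F$, I would write $\beta = \sum_i \ell_i \kappa_i$ with $\ell_i \in L$ and $\kappa_i \in F$, and pick $N$ large enough that $\kappa_i^{p^N} \in K$ for every $i$. Applying the Frobenius yields $\beta^{p^N} = \sum_i \ell_i^{p^N} \kappa_i^{p^N} \in L$, and the analogous argument with an expression of $\beta$ in $L'F$ produces an integer $N'$ with $\beta^{p^{N'}} \in L'$. Setting $r = \max(N, N')$ gives $\beta^{p^r} \in L \cap L'$, which by hypothesis is purely inseparable over $K$; hence some higher $p$-power of $\beta$ lies in $K$, so $\beta \in F$.

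I expect the only subtle step to be the identification $M^H = LF$: this is where the non-Galois, normal character of $M/K$ enters, and it rests on the structural fact that a normal algebraic extension decomposes as a Galois extension on top of a purely inseparable extension. Once that identification is in hand, the reduction to $LF\cap L'F = F$ is automatic and the Frobenius computation is short.
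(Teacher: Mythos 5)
Your proof is correct and follows essentially the same strategy as the paper: both reduce the statement to the infinite Galois correspondence for $M$ over the purely inseparable closure $F = M^G = M_{K,\ins}$, using that $M/F$ is Galois (the paper's Proposition \ref{1c777}). The only real difference is cosmetic. The paper identifies $H$ with $\Aut_{M_{L,\ins}}(M)$ and works with the purely inseparable closures $M_{L,\ins}$, $M_{L',\ins}$, reducing the claim to the unproved (but routine) identity $M_{L,\ins}\cap M_{L',\ins}=M_{L\cap L',\ins}$. You instead identify $H$ with $\Aut_{LF}(M)$ and work with the compositums $LF$, $L'F$; these are the same fields (indeed $LF\subseteq M_{L,\ins}$ by definition, and the reverse inclusion holds since $M_{L,\ins}/LF$ is both purely inseparable and a subextension of the separable $M/F$). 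Your version has the small advantage of actually proving the final reduction $LF\cap L'F=F\iff L\cap L'/K$ purely inseparable, via the explicit Frobenius computation, where the paper waves at its analogue. So: same route, slightly more self-contained in the last step.
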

\begin{proof}
Set $p=\chart(K)$ if $\chart(K)$ is positive and $1$ otherwise. It is very easy to see that $M_{L,\ins} \cap M_{L',\ins}=M_{L \cap L',\ins}$. Note
that
$H=\Aut_{M_{L,\ins}}(M)$, $H'=\Aut_{M_{L',\ins}}(M)$ and $G=\Aut_{M_{K,\ins}}(M)$ and that $M$ is Galois over $M_{K,\ins}$, $M_{L,\ins}$ and
$M_{L',\ins}$ (Proposition \ref{1c777}). From Galois theory it follows that $\overline{\langle H, H' \rangle}$ corresponds to $M_{L,\ins}
\cap M_{L',\ins}=M_{L \cap L',\ins}$ and that $G$ corresponds to $M_{K,\ins}$. Hence one has: $\overline{\langle H, H' \rangle}=G$ iff $M_{L \cap
L',\ins}=M_{K,\ins}$ iff $L \cap L'/K$ is purely inseparable.
\end{proof}

For a field $K$ we denote by $K_{\sep}$ its separable closure\index{$K_{\sep}$}.

\begin{proposition} \label{1c931}
 Let $M/K$ be a normal extension of fields with group $G=\Aut_K(M)$ and let $L, L'$ be two intermediate extensions. Put $H=\Aut_L(M)$ and
$H'=\Aut_{L'}(M)$. Assume that $L/K$ is separable.
Then the following statements are equivalent:
\begin{enumerate}
 \item $L$ and $L'$ are linearly disjoint over $K$;
 \item $L \otimes_K L'$ is a domain;
 \item the natural map $L \otimes_K L' \to LL'$ is an isomorphism;
 \item $G=H \cdot H'$;
 \item $H'$ acts transitively on $G/H$;
 \item the natural map $\Hom_{L'}(LL',K_{\sep}) \to \Hom_K(L,K_{\sep})$ is a bijection.
\end{enumerate}
If $L/K$ or $L'/K$ is normal, then the above statements are equivalent to $L \cap L'=K$.
\end{proposition}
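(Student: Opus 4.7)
The strategy is to split the six conditions into a commutative-algebra group $\{(i),(ii),(iii)\}$ and a group-theoretic group $\{(iv),(v),(vi)\}$, each internally easy, and then bridge them by a dimension count performed at the level of finite subextensions. For (i)$\Leftrightarrow$(ii), write $L \otimes_K L'$ as the directed union of its sub-$K$-algebras $L_\alpha \otimes_K L_\beta'$ over finite subextensions; each is finite-dimensional over $K$, hence Artinian, and an Artinian domain is a field. So $L \otimes_K L'$ is a domain iff every such piece is a field iff the whole thing is a field. For (ii)$\Leftrightarrow$(iii), the natural map $L \otimes_K L' \to LL'$ is surjective onto a subring of $\Omega$ (a domain), so its kernel vanishes iff the source is a domain. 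For (iv)$\Leftrightarrow$(v), $H'$ acts transitively on $G/H$ iff every coset $gH$ contains an element of $H'$, i.e.\ $G = H'H$.

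For (v)$\Leftrightarrow$(vi), separability of $L/K$ together with normality of $M/K$ yields a bijection $G/H \xrightarrow{\sim} \Hom_K(L, K_{\sep})$ via $gH \mapsto g|_L$, since any $K$-embedding $L \to K_{\sep}$ factors through the normal $M$ and extends to an element of $G$; similarly $LL'/L'$ is separable (it is generated over $L'$ by elements of $L$, each separable over $K$) and $M/L'$ is normal, so $H'/(H \cap H') \xrightarrow{\sim} \Hom_{L'}(LL', K_{\sep})$. Under these identifications the restriction map in (vi) becomes $h'(H \cap H') \mapsto h'H$, which is always injective and is surjective precisely when $G = H'H$.

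To bridge (iii)$\Leftrightarrow$(v): for finite $L_\alpha/K$ and $L_\beta'/K$, separability of $L/K$ gives $\dim_{L_\beta'}(L_\alpha \otimes_K L_\beta') = [L_\alpha:K] = \#\Hom_K(L_\alpha, K_{\sep})$, and separability of $L_\alpha L_\beta'/L_\beta'$ gives $[L_\alpha L_\beta':L_\beta'] = \#\Hom_{L_\beta'}(L_\alpha L_\beta', K_{\sep})$. The surjection $L_\alpha \otimes_K L_\beta' \to L_\alpha L_\beta'$ is an isomorphism iff these two integers agree, which by the already-proved (v)$\Leftrightarrow$(vi) is the finite analogue of (v). The passage to the limit: (iii) holds iff every $L_\alpha \otimes_K L_\beta' \to L_\alpha L_\beta'$ is an isomorphism, while $G = H'H$ holds iff it holds in every finite quotient of $G$, which one lifts back to $G$ via the compactness of $H \times H'$ and the finite intersection property for the closed sets $\{(h,h') \in H \times H': hh'g^{-1} \in N\}$ as $N$ varies over open normal subgroups.

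For the final equivalence with $L \cap L' = K$, assume $L/K$ is normal (the case $L'/K$ normal is symmetric). Then $H$ is a closed normal subgroup of $G$, so $HH' = H'H$ is a subgroup; it is also closed, being the image of the compact set $H \times H'$ under the continuous multiplication of the profinite group $G$. Hence $\overline{\langle H, H' \rangle} = HH'$, and Lemma \ref{1c633} yields that (iv) is equivalent to $L \cap L'/K$ being purely inseparable. Since $L \cap L' \subseteq L$ is separable over $K$, purely inseparable forces $L \cap L' = K$, and the converse is trivial. The main technical obstacle is the passage to the limit in the bridge between (iii) and (v): one must faithfully transfer the finite-dimension count along the directed system of finite subextensions and, at the group-theoretic end, invoke a compactness argument to lift $G = H'H$ from the finite quotients to $G$; the remaining steps are essentially bookkeeping once the separability hypothesis is tracked.
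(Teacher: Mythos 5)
Your proposal is correct, and nearly all of its ingredients coincide with the paper's: the identification of the map in (vi) with the injection $H'/(H\cap H')\to G/H$ (so (iv) $\Leftrightarrow$ (v) $\Leftrightarrow$ (vi)), the reduction of (i)--(iii) to finite subextensions via ``Artinian domain over a field is a field,'' the degree count $[L_\alpha L_\beta':L_\beta']=[L_\alpha:K]$ made available by separability, and, for the final claim, normality of $H$ giving $HH'=\langle H,H'\rangle$, compactness giving closedness, and Lemma \ref{1c633}. The genuine difference is how you close the cycle: the paper proves (i) $\Rightarrow$ (vi) by producing, for every finite $L''\subseteq L$, the unique lift of $\varphi|_{L''}$ and gluing by uniqueness, and gets (iv) $\Rightarrow$ (i) by passing to finite subextensions of $L$, so no profinite argument occurs in the bridge; you instead prove (iii) $\Leftrightarrow$ (v) by the same finite-level dimension count and then lift $G=H'H$ from the finite quotients by compactness of $H\times H'$. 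Your version makes the limit step purely group-theoretic, but it leaves implicit one piece of bookkeeping that deserves a line: to match the finite pairs $(L\cap M_\gamma,\,L'\cap M_\gamma)$ with the quotient $G/N$, $N=\Aut_{M_\gamma}(M)$ for $M_\gamma/K$ finite normal, you need that the image of $H$ (resp.\ $H'$) in $\Aut_K(M_\gamma)$ is exactly $\Aut_{L\cap M_\gamma}(M_\gamma)$ (resp.\ $\Aut_{L'\cap M_\gamma}(M_\gamma)$); this is the composite-field theorem plus a short argument for the purely inseparable part of $M_\gamma$ (true, but not automatic, since $L'/K$ need not be separable). One more small point: in (ii) $\Rightarrow$ (iii), ``the source is a domain, hence the kernel vanishes'' is not formal (a domain can surject onto a domain with nonzero kernel); it is correct here because, as your own (i) $\Leftrightarrow$ (ii) argument shows, the domain is in fact a field, and you should say so. The treatment of the last assertion ($L\cap L'=K$ when $L$ or $L'$ is normal over $K$) is identical to the paper's.
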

\begin{proof}
i $\iff$ ii: One implication is obvious. Suppose that $L \otimes_K L'$ is a domain. To show that every element has an inverse, we may reduce to the
case where both $L/K$ and $L'/K$ are finite. The result follows since a domain which is finite over a field is a field.

i $\iff$ iii: Obvious.

iv $\iff$ v: Obvious.

v $\iff$ vi: The map in vi is the natural injective map $H'/(H \cap H') \to G/H$. It is surjective iff $H'$ acts transitively on $G/H$.

i $\implies$ vi: The natural map $\Hom_{L'}(LL',K_{\sep}) \to \Hom_K(L,K_{\sep})$ is injective. Let $\varphi \in \Hom_K(L,K_s)$ be given. Let $L''$ be
a finite extension of $K$ contained in $L$. Since $L$ and $L'$ are disjoint over $K$, we find $[LL':L']=[L:K]$. This shows, since $L/K$ is
separable, that the natural injective map $\Hom_{L'}(L'' L',K_{\sep}) \to \Hom_K(L'',K_{\sep})$ is a bijection.
Hence there is a unique morphism of fields in $\Hom_{L'}(L'' L',K_s)$ mapping to $\varphi|_{L''}$. By uniqueness we can glue these morphisms to a unique
morphism mapping to $\varphi$.

iv $\implies$ i: If $G=H \cdot H'$, then for any finite subextension of $L/K$ the same holds. Hence all finite extensions of $L/K$ are linearly
disjoint from $L'$. But then it easily follows that $L$ and $L'$ are linearly disjoint over $K$.

We will now prove the last part. If $L \otimes_K L'$ is a field, then obviously we have $L \cap L'=K$. 
For the other implication, assume first that $L/K$ is normal. This means that $H=\ker(\Aut_K(M) \to \Aut_K(L))$ is a normal subgroup of $G$. But then
one easily sees that $H \cdot H'=\langle H, H' \rangle$. A similar statement holds if $L'/K$ is normal. Furthermore, as $H$ and $H'$ are compact
groups, one sees that $H \cdot H'$ is closed. Hence $\overline{ \langle H, H' \rangle}=H \cdot
H'$. From \ref{1c633}, as $L/K$ is separable, it follows that $H \cdot H' = \overline{ \langle H, H' \rangle}=G$. The result follows.
\end{proof}

\begin{proof}[Proof of Proposition \ref{1c432}] \label{1cp2}
 Let $\mathfrak{L}$ be the set of subfields $F$ of $L$ such that the natural map $L \otimes_F L'F \to LL'$ is an isomorphism. Consider the notation
from i. Directly from the definitions it follows that for a subfield $F$ of $L$ we
have $F \in \mathfrak{L}$ iff $\mathfrak{B}$ spans $L'F$ as $F$-vector space. But $L'F$ is generated as an $F$-vector space by $L'$ and
each $x \in L'$ can be written in a unique way as $x=\sum_{b \in \mathfrak{B}} c_{x,b}b$ where $c_{x,b} \in L$ and almost all $c_{x,b}$ are $0$.
Let $\F$ be the primefield of $L$. Hence we conclude
that $F \in \mathfrak{L}$ iff for all $x \in L$ and $b \in \mathfrak{B}$ we have $c_{x,b} \in F$ iff $F$ contains $M=\F(c_{x,b}: x \in L, b
\in \mathfrak{B})$. 
Description ii follows directly from description one since we can extend an independent set to a basis.
\end{proof}

\begin{definition}
 The field $M$ in the above theorem is called the \index{field of definition}\emph{field of definition} of $L'$ over $L$ and is denoted by $L \fod
L'$\index{$\fod$}. 
\end{definition}

We deduce some properties of $L \fod L'$.

\begin{lemma}
 Let $\Omega$ be a field and let $L,L' \subseteq \Omega$ be subfields. Then the following hold:
\begin{enumerate}
 \item $L \cap L' \subseteq L \fod L'$; 
 \item $L \cap L' = L \fod L'$ iff $L \cap L'= L' \fod L$ iff $L \fod L'=L' \fod L$.
\end{enumerate}
\end{lemma}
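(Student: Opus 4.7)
The plan is to use the two explicit descriptions of $L \fod L'$ provided by Proposition \ref{1c432}, together with the symmetry of the tensor product.

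For part (i), I would use description (i) from Proposition \ref{1c432}, but with a carefully chosen basis $\mathfrak{B}$. Since $1 \in L' \subseteq LL'$, the element $1$ extends to a basis of $LL'$ over $L$, so we may arrange that $1 \in \mathfrak{B}$. Now take any $x \in L \cap L'$; since $x \in L'$, it admits a unique expansion $x = \sum_{b \in \mathfrak{B}} c_{x,b} b$ with $c_{x,b} \in L$. On the other hand, $x \in L$ means the expansion $x = x \cdot 1$ is valid, and uniqueness forces $c_{x,1} = x$ and $c_{x,b} = 0$ for $b \neq 1$. Since $c_{x,1}$ is one of the generators of $L \fod L' = \F(c_{x,b} : x \in L', b \in \mathfrak{B})$, we conclude $x \in L \fod L'$.

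For part (ii), the crucial observation is a symmetry argument. Suppose $L \cap L' = L \fod L'$, and set $F = L \cap L'$. Then by the defining property of $L \fod L'$, the natural map $L \otimes_F (L'F) \to LL'$ is an isomorphism. But $F \subseteq L'$ forces $L'F = L'$, so this reads $L \otimes_F L' \to LL'$. The tensor product is symmetric: the same map, viewed the other way, is the natural map $L' \otimes_F (LF) \to L'L$, which is therefore also an isomorphism. By the defining property of $L' \fod L$, this gives $L' \fod L \subseteq F = L \cap L'$, and together with part (i) applied symmetrically we get $L \cap L' = L' \fod L$. The converse implication goes identically.

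Finally, to close the equivalence chain with $L \fod L' = L' \fod L$, one direction is immediate from the preceding step: if $L \cap L' = L \fod L'$, then also $L \cap L' = L' \fod L$, so the two fields of definition coincide. For the converse, note that $L \fod L' \subseteq L$ and $L' \fod L \subseteq L'$ by construction; if they are equal as subfields of $\Omega$, they must lie in $L \cap L'$, and combined with the inclusion $L \cap L' \subseteq L \fod L'$ from part (i) we get equality $L \cap L' = L \fod L'$.

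I do not expect any serious obstacle: part (i) is essentially a calculation in a well-chosen basis, and part (ii) is driven entirely by the $L \leftrightarrow L'$ symmetry of the tensor product combined with the minimality characterization of the field of definition. The one point to be careful about is confirming that $1$ can be taken to belong to the basis $\mathfrak{B}$ of $LL'$ over $L$, which is a standard extension-of-an-independent-set-to-a-basis argument.
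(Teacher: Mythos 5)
Your proof is correct and takes essentially the same route as the paper: part (ii) is the same argument combining the symmetry of the tensor product with the minimality characterization in Proposition \ref{1c432} and part (i). The only (harmless) variation is in part (i), where the paper notes directly that an $x \in L \cap L'$ outside $M = L \fod L'$ would make $x \otimes 1 - 1 \otimes x$ a nonzero element of $L \otimes_{M} ML'$ mapping to zero in $LL'$, whereas you reach the same conclusion via the coefficient description with $1 \in \mathfrak{B}$.
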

\begin{proof}
 i: Suppose $x \in L \cap L' \setminus L \fod L'$. Then the nonzero element $x \otimes 1 - 1 \otimes x$ maps to zero under $L \otimes_{L \fod L'} (L
\fod L')L' \to L L'$, contradiction.

 ii: By symmetry, it suffices to show that the first and last statement are equivalent. Suppose that $L \cap L' = L \fod L'$. Then one has an
isomorphism $L \otimes_{L \cap L'} (L \cap L')L' \to L L'$ and from i one deduces that $L' \fod L=L' \cap L=L \fod L'$.
Suppose $L \fod L'=L' \fod L$. Then one has $L \fod L' \subseteq L \cap L'$ and the result follows from i.  
\end{proof}

\begin{lemma} \label{1c8883}
Let $G$ be a group and let $H, H' \subseteq G$ be subgroups. Let $J'$ be a subgroup of $HH'$ containing $H$. Then $H$ acts transitively on $J'/J' \cap H'$ if and only if $J'$ is contained in the group $\{g \in G: gHH'=HH'\}$.
\end{lemma}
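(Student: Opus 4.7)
The plan is to rephrase both sides of the desired equivalence as set-theoretic identities in $G$ and then prove each implication directly. First I unpack the transitivity statement: since $H \subseteq J'$, the group $H$ acts on the left cosets $J'/(J' \cap H')$ by left multiplication, and the inclusion $H(J' \cap H') \subseteq J'$ is automatic. Transitivity is therefore equivalent to the reverse inclusion, that is, to the set equality $J' = H \cdot (J' \cap H')$. On the other side, writing $N = \{g \in G : gHH' = HH'\}$ (a subgroup of $G$), the condition $J' \subseteq N$ is the assertion $jHH' = HH'$ for all $j \in J'$; since $e \in HH'$, this already forces $J' \subseteq HH'$.

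For the reverse direction, assume $J' \subseteq N$, so in particular $J' \subseteq HH'$. Given $j \in J'$, factor $j = hh'$ with $h \in H$ and $h' \in H'$. Then $h' = h^{-1} j$ lies in $J'$ (both factors do), hence $h' \in J' \cap H'$ and $j \in H(J' \cap H')$. This gives the desired equality $J' = H(J' \cap H')$, hence transitivity.

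For the forward direction, assume $J' = H(J' \cap H')$; in particular $J' \subseteq HH'$. Fix $j \in J'$, write $j = hh'$ with $h \in H$ and $h' \in J' \cap H'$, and let $x = h_1 h_1' \in HH'$ with $h_1 \in H$, $h_1' \in H'$. The key observation is that $h' h_1 \in J'$, because both $h' \in J' \cap H'$ and $h_1 \in H$ lie in $J'$; hence $h' h_1 \in HH'$, so we may write $h' h_1 = h_2 h_2'$ with $h_2 \in H$, $h_2' \in H'$. Then
\[ jx = h(h' h_1) h_1' = h h_2 h_2' h_1' \in HH', \]
showing $jHH' \subseteq HH'$. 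Applying the same argument to $j^{-1} \in J'$ yields $j^{-1} HH' \subseteq HH'$, equivalently $HH' \subseteq jHH'$, so $jHH' = HH'$ and $j \in N$.

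The principal (mild) obstacle is purely bookkeeping: the product $HH'$ is not assumed to be a subgroup of $G$, so one cannot freely rearrange. The argument hinges on the single nontrivial step of rewriting the ``middle'' factor $h' h_1$, a product in $H' \cdot H$, as an element of $HH'$, and this is exactly the point at which the hypotheses that $J'$ is a subgroup of $G$ containing $H$ and that $h' \in J'$ are used in an essential way.
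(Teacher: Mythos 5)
Your proof is correct, but it follows a different route from the paper's. The paper does no element factorizations at all: it identifies $J'/(J'\cap H')$ with the orbit $J'x$ of the trivial coset $x=H'$ in $G/H'$, so that transitivity of $H$ becomes the single orbit equality $Hx=J'x$ (i.e.\ $HH'=J'H'$), the forward implication is the one-line computation $j'Hx=j'J'x=J'x=Hx$, and the converse comes from $Jx=JHx=Hx$ together with the sandwich $Hx\subseteq J'x\subseteq Jx$. You instead encode transitivity as the equality $J'=H\,(J'\cap H')$ and argue element-by-element, using $J'\subseteq HH'$ to rewrite the middle factor $h'h_1$ inside $HH'$ and the $j^{-1}$ trick to upgrade the inclusion $jHH'\subseteq HH'$ to an equality. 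Two comparative remarks. First, the paper's argument never invokes the hypothesis $J'\subseteq HH'$, so it in fact proves the equivalence for an arbitrary subgroup $J'$ of $G$ containing $H$ (transitivity then yields $J'\subseteq J'H'=HH'$ after the fact), which is the form in which the statement has real content and in which it is used to identify $J$ as the \emph{largest} such subgroup in the proposition that follows; your forward direction genuinely needs $J'\subseteq HH'$. Second, your own reverse direction quietly reveals that under the literal hypotheses $H\subseteq J'\subseteq HH'$ transitivity is automatic (you use $J'\subseteq\{g:gHH'=HH'\}$ only to recover $J'\subseteq HH'$, which is already assumed), and your forward computation then shows the containment in that group is automatic as well; so the lemma as stated is an equivalence of two conditions that always hold. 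This is not a flaw in your argument---it is a valid proof of the statement as written---but it explains why the paper's orbit-theoretic phrasing, which drops $J'\subseteq HH'$, is the sharper one.
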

\begin{proof}
First notice $J'/J' \cap H' \cong J'(H'/H') \subseteq G/H'$ (as $J'$-sets). Put $x=H'/H'$. Hence we need to find the largest $J'$ such that $H$ acts transitively on $J'x$, that is
$Hx=J'x$. Notice that $J=\{g \in G: gHH'=HH'\}=\{g \in G: gHx=Hx\}$. If $H$ acts transitively on $J'x$, we have for $j' \in
J'$:
\begin{align*}
 j' Hx=j'J'x=J'x=Hx,
\end{align*}
 hence $j' \in J$. Conversely, $J$ is a subgroup containing $H$ with the property that $Jx=JHx=Hx$.
\end{proof}

\begin{proposition}
 Let $L, L'$ be subfields of a field $\Omega$. Assume that $L/L \cap L'$ is separable. Let $M$ be a normal extension of $L \cap L'$ containing $LL'$ with groups $G=\Aut_{L \cap L'}(M)$, $H=\Aut_{L}(M)$ and $H'=\Aut_{L'}(M)$. Let $J=\{g \in G: gHH'=HH'\}$. Then one has:
\begin{align*}
 L \fod L' = (LL')^J \cap L.
\end{align*}
\end{proposition}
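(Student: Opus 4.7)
The plan is to show that $L \fod L' = M^J \cap L$, from which the claim follows since $L \subseteq LL'$ gives
\[
 M^J \cap L = (M^J \cap LL') \cap L = (LL')^J \cap L.
\]
By the preceding lemma, $L \cap L' \subseteq L \fod L'$, so it suffices to identify the minimal subfield $F$ of $L$ containing $K_0 := L \cap L'$ for which the natural map $L \otimes_F L'F \to LL'$ is an isomorphism.

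For any such $F$, the extension $L/F$ is separable because $L/K_0$ is. Applying Proposition \ref{1c931} with $K$ replaced by $F$ and $L'$ replaced by $L'F$ (the ambient normal extension being $M$), and noting $\Aut_{L'F}(M) = H' \cap G_F$ where $G_F := \Aut_F(M)$, the isomorphism condition becomes equivalent to $G_F = H \cdot (G_F \cap H')$. One direction of this equation forces $G_F \subseteq HH'$, while in the reverse direction $G_F \subseteq J \subseteq HH'$ is automatic from the definition of $J$. Since $H \subseteq G_F$ always, Lemma \ref{1c8883} (applied with $J' = G_F$) translates the displayed equation into $G_F \subseteq J$.

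Let $K_{00}$ be the maximal purely inseparable extension of $K_0$ in $M$; by the argument preceding Lemma \ref{1c633}, $M/K_{00}$ is Galois with group $G$. Via the Galois correspondence, $G_F = \Aut_{FK_{00}}(M)$ corresponds to $FK_{00}$, so $G_F \subseteq J$ is equivalent to $FK_{00} \supseteq M^J$. Because $L/K_0$ is separable and $K_{00}/K_0$ is purely inseparable, $F \mapsto FK_{00}$ is a bijection between subfields of $L$ containing $K_0$ and subfields of $LK_{00}$ containing $K_{00}$, with inverse $E \mapsto E \cap L$. Since $H \subseteq J$ implies $M^J \subseteq M^H = LK_{00}$ while $K_{00} \subseteq M^J$ is clear, the field $M^J$ lies in the range of this bijection, and $FK_{00} \supseteq M^J$ translates to $F \supseteq M^J \cap L$. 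Hence the minimal such $F$ is $L \fod L' = M^J \cap L$.

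The main obstacle is verifying the bijection $F \leftrightarrow FK_{00}$: injectivity is immediate since $FK_{00} \cap L = F$ by the separable-vs-purely-inseparable dichotomy, while surjectivity, the assertion that $E = (E \cap L)K_{00}$ for $E$ intermediate in $LK_{00}/K_{00}$, can be handled by passing to a Galois closure of $L/K_0$ and applying the usual Galois correspondence, or alternatively by recognizing $LK_{00} \cong L \otimes_{K_0} K_{00}$ and tracking subrings.
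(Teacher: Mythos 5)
Your reduction of the problem is the same as the paper's: using Proposition \ref{1c931} (over the base $F$) and Lemma \ref{1c8883} with $J'=G_F:=\Aut_F(M)$, the condition that $L\otimes_F L'F\to LL'$ be an isomorphism becomes $G_F\subseteq J$, and your handling of the side condition $G_F\subseteq HH'$ is correct. The gap is in the step ``$G_F\subseteq J$ is equivalent to $FK_{00}\supseteq M^J$.'' The forward implication is fine (and yields $M^J\cap L\subseteq L\fod L'$ unconditionally), but the reverse implication invokes the Galois correspondence for the subgroup $J$, which is only legitimate for subgroups closed in the Krull topology: for the Galois extension $M/K_{00}$ one has $\Aut_{M^J}(M)=\overline{J}$, the closure of $J$. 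Applied to the crucial case $F=M^J\cap L$ (where, granting your bijection, $FK_{00}=M^J$), your step asserts precisely $G_F=\overline{J}\subseteq J$, i.e.\ that $J$ is closed. Since $M/(L\cap L')$ may be infinite, this is not automatic for an abstractly defined subgroup, and nothing in your argument addresses it; without it you only obtain the inclusion $M^J\cap L\subseteq L\fod L'$, not the stated equality.

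This is exactly the point to which the paper devotes the second half of its proof: $H$ and $H'$ are compact, hence $HH'$ and $H'H$ are compact and therefore closed, and $J=\bigcap_{\tau\in HH'}\left(\tau H'H\cap HH'\tau^{-1}\right)$ is an intersection of closed sets, hence closed. Supplying such an argument closes your gap. By contrast, the step you single out as ``the main obstacle,'' namely $E=(E\cap L)K_{00}$ for $K_{00}\subseteq E\subseteq LK_{00}$, is true and your sketch (pass to a Galois closure of $L/(L\cap L')$, which is linearly disjoint from $K_{00}$) does work; it is the topological issue, not this one, that your proposal is missing.
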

\begin{proof}
Proposition \ref{1c931} shows that we need to find a maximal subgroup $J' \subseteq HH'$ containing $H$ such that $H$ acts 
transitively on $J'/J' \cap H'$. The unique maximal subgroup with this property is $J$ (Lemma \ref{1c8883}). 
It remains to show that $J$ is a closed subgroup. Notice that $H$ and $H'$ are compact, and hence that $HH'$ is compact (because it is the image of
$H \times H'$ under the map $G \times G \to G$) and since we are in a Hausdorff space, it is closed. Similarly, $H'H$ is compact and hence closed.
Note that the translation maps are continous. One then has
\begin{align*}
 J= \bigcap_{\tau \in HH'} \left( \tau H'H \cap HH' \tau^{-1} \right).
\end{align*}
Hence $J$ is an intersection of closed subgroups, and hence closed.
\end{proof}

\subsubsection{Separably disjoint extensions}

Let $L/K$ be an algebraic extension of fields and let $p$ be the characteristic of $K$ if this is nonzero, and $1$ otherwise.
Then we put \index{$L_{K,\ins}$}
\begin{align*}
L_{K,\ins}=\{x \in L: \exists j \in \Z_{\geq 0}: x^{p^j} \in K\},
\end{align*}
the maximal purely inseparable field extension of
$K$ in $L$. Notice that $L_{K,\ins} \cap L_{K,\sep}=K$. 

\begin{definition}
 An algebraic field extension $L/K$ is called \emph{separably disjoint}\index{separably disjoint}\index{field extension!separably disjoint} if
$L=L_{K,\sep}L_{K,\ins}$. 
\end{definition}

\begin{lemma} \label{1c902010}
 Let $L/K$ be an algebraic extension of valued fields. Then $L/K$ is separably disjoint if and only if $L/L_{K,\ins}$ is separable.
\end{lemma}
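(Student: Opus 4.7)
The plan is to split the biconditional into two implications, both of which reduce to standard facts about separable and purely inseparable extensions. In characteristic zero the statement is vacuous, since then $L_{K,\ins}=K$, every algebraic extension is automatically separable, and $L_{K,\sep}L_{K,\ins}=L_{K,\sep}=L$; so I focus on the positive-characteristic case, where $p=\chart(K)>0$.

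For the direction $L=L_{K,\sep}L_{K,\ins}\Rightarrow L/L_{K,\ins}$ separable, I would first observe that every element of $L_{K,\sep}$ is separable over $K$, hence its minimal polynomial over the larger field $L_{K,\ins}$ divides the one over $K$ and is also separable. Thus $L_{K,\sep}/L_{K,\ins}$ is separable, and the compositum $L=L_{K,\ins}(L_{K,\sep})$ is separable over $L_{K,\ins}$ since it is generated by separable elements.

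For the converse, set $F=L_{K,\sep}L_{K,\ins}$, so that $L_{K,\ins}\subseteq F\subseteq L$. The key idea is to show that $L/F$ is simultaneously separable and purely inseparable, which forces $L=F$ and hence $L=L_{K,\sep}L_{K,\ins}$. Separability of $L/F$ follows by base change from the hypothesis that $L/L_{K,\ins}$ is separable. Pure inseparability of $L/F$ follows from the fact that $L/L_{K,\sep}$ is purely inseparable: given $x\in L$, its minimal polynomial over $K$ has the shape $g(T^{p^n})$ with $g$ separable irreducible, so $x^{p^n}$ is a root of $g$ and therefore lies in $L_{K,\sep}\subseteq F$. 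Hence each $x\in L$ is purely inseparable over $F$, and combined with separability of $L/F$ we conclude $L=F$.

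The only input beyond definitions is the classical assertion that $L/L_{K,\sep}$ is always purely inseparable, which I have just sketched in one line via the shape of minimal polynomials in characteristic $p$. There is no real obstacle: once that ingredient is in hand, the argument is a short formal manipulation, and the valuation on $L$ plays no role whatsoever in the proof.
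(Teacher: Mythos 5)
Your proof is correct and follows essentially the same route as the paper: the forward direction uses that elements separable over $K$ stay separable over $L_{K,\ins}$, and the converse shows $L/L_{K,\sep}L_{K,\ins}$ is both separable (from the hypothesis) and purely inseparable (via $L/L_{K,\sep}$), forcing equality. You spell out the easy forward direction in a bit more detail than the paper's ``follows directly from the definitions,'' but there is no substantive difference.
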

\begin{proof}
 $\implies$: Follows directly from the definitions.

$\limplies$: Note that $L/L_{K,\sep}$ is purely inseparable and hence $L/L_{K,\sep}L_{K,\ins}$ is purely inseparable and separable. It follows that
$L=L_{K,\sep}L_{K,\ins}$.
\end{proof}

\begin{proposition} \label{1c777}
Let $L/K$ be a normal extension of fields. Then $L/K$ is separably disjoint. 
\end{proposition}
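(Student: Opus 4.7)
The plan is to invoke Lemma \ref{1c902010}: it suffices to prove that $L/L_{K,\ins}$ is separable. If $\chart(K)=0$ then $L_{K,\ins}=K$ and every algebraic extension is separable, so we may assume $p=\chart(K)>0$.

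Fix $\alpha\in L$ and let $f\in K[X]$ be its minimal polynomial. By the standard structure of irreducible polynomials in characteristic $p$, write $f(X)=g(X^{p^e})$ for some separable irreducible $g\in K[X]$ and some $e\in\Z_{\geq 0}$. Over $\overline{K}$ this factors as $f(X)=\prod_{i=1}^{r}(X-\alpha_i)^{p^e}$ with the $\alpha_i$ pairwise distinct (they are the unique $p^e$-th roots of the distinct roots of $g$). Because $L/K$ is normal, every $\alpha_i$ lies in $L$, so the polynomial
\[
h(X)=\prod_{i=1}^{r}(X-\alpha_i)\in L[X]
\]
is separable and satisfies $h(\alpha)=0$.

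The key step, which I expect to be the only non-routine point, is to show that $h$ actually has coefficients in $L_{K,\ins}$. Writing $h(X)=\sum_j c_j X^j$ with $c_j\in L$ and using the characteristic-$p$ Frobenius identity iteratively, we get $h(X)^{p^e}=\sum_j c_j^{p^e}X^{j\,p^e}$. Since $h(X)^{p^e}=\prod_i(X-\alpha_i)^{p^e}=f(X)\in K[X]$, comparing coefficients forces $c_j^{p^e}\in K$ for every $j$, so each $c_j$ lies in $L_{K,\ins}$ by definition of the purely inseparable closure.

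Thus $h\in L_{K,\ins}[X]$ is a separable polynomial vanishing at $\alpha$, and so the minimal polynomial of $\alpha$ over $L_{K,\ins}$ divides $h$ and is separable. Hence $\alpha$ is separable over $L_{K,\ins}$. As $\alpha\in L$ was arbitrary, $L/L_{K,\ins}$ is separable, and Lemma \ref{1c902010} gives $L=L_{K,\sep}L_{K,\ins}$, i.e.\ $L/K$ is separably disjoint.
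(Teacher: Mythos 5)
Your proof is correct, and it takes a genuinely different route from the paper's. The paper's argument is Galois--theoretic: it shows $L^{\Aut_K(L)}=L_{K,\ins}$ (using normality and a Zorn-type extension of $K$-embeddings to produce a $K$-automorphism moving any $x\notin L_{K,\ins}$), and then invokes the Artin-type fact that $L$ is separable over the fixed field of any group of automorphisms. Your argument avoids automorphisms entirely and works directly with the structure of irreducible polynomials in characteristic $p$: you peel off the inseparable part $f(X)=g(X^{p^e})$, use normality only to place all roots $\alpha_i$ of $f$ inside $L$, and then exploit the Frobenius identity $h(X)^{p^e}=\sum_j c_j^{p^e}X^{jp^e}=f(X)$ to force the coefficients of the separable polynomial $h=\prod_i(X-\alpha_i)$ into $L_{K,\ins}$. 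This is more elementary and more self-contained (no Zorn, no Artin fixed-field lemma), at the cost of being slightly longer; the paper's version is more in the spirit of the Galois-theoretic framework the rest of the article is built on. Both correctly reduce to showing $L/L_{K,\ins}$ is separable and then appeal to Lemma~\ref{1c902010}.
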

\begin{proof}
See \cite[Chapter V, Proposition 6.11]{LA}.

Here is a similar proof. Take $x \in L \setminus L_{K,\ins}$. As $x$ is not purely inseparable over
$L_{K,\ins}$ and as $L/K$ is normal, there is an element of $\Aut_{K}(L)$ which does not fix $x$ (use Zorn to find such a morphism).
Hence $L^{\Aut_{K}(L)}=L_{K,\ins}$ and from Galois theory it follows that $L/L_{K,\ins}$ is separable. Apply Lemma \ref{1c902010}.
\end{proof}

Notice that any algebraic field extension $L/K$ has a unique maximal separably disjoint subextension, namely $L_{K,\sep}L_{K,\ins}$.

\begin{proposition} \label{1c1111}
 Let $L/K$ be an algebraic extension of fields. Then
\begin{align*}
\varphi \colon \{E: K \subseteq E \subseteq L\} &\to \{(D,F): K \subseteq D \subseteq L_{K,\sep} \subseteq F \subseteq L,\ F/D \textrm{ sep. disj.} \} \\
E &\mapsto (E_{K,\sep}, E L_{K,\sep})
\end{align*}
is a bijection with inverse
\begin{align*}
 (D,F) \mapsto F_{D,\ins}.
\end{align*}
\end{proposition}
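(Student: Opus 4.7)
The plan is to verify that $\varphi$ and the candidate inverse $\psi:(D,F)\mapsto F_{D,\ins}$ are both well-defined, and then to check the two compositions are the identity. Throughout the argument I will lean on three standard facts about algebraic field extensions, each of which is already in use in the preceding subsection: (a) for any algebraic $L/K$ the extension $L/L_{K,\sep}$ is purely inseparable; (b) an element that is separable over a base $K$ is separable over every intermediate field $D\supseteq K$; and (c) an element that is simultaneously separable and purely inseparable over a base field lies in that base field.

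First I would dispose of well-definedness. For $\psi$, nothing is required beyond observing $K\subseteq D\subseteq F_{D,\ins}\subseteq F\subseteq L$. For $\varphi$, writing $D=E_{K,\sep}$ and $F=EL_{K,\sep}$, the containments $K\subseteq D\subseteq L_{K,\sep}\subseteq F\subseteq L$ are immediate, and the content is that $F/D$ is separably disjoint. I would compute $F_{D,\sep}=L_{K,\sep}$ directly: any $y\in F_{D,\sep}$ is separable over $D\supseteq K$ and hence separable over $K$, so $y\in L\cap K_{\sep}=L_{K,\sep}$; conversely $L_{K,\sep}/K$ separable gives $L_{K,\sep}/D$ separable, so $L_{K,\sep}\subseteq F_{D,\sep}$. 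Since $E/D=E/E_{K,\sep}$ is purely inseparable by fact (a), also $E\subseteq F_{D,\ins}$, and combining gives $F=EL_{K,\sep}\subseteq F_{D,\ins}F_{D,\sep}\subseteq F$, so $F=F_{D,\sep}F_{D,\ins}$ as required.

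For $\varphi\circ\psi=\id$: given $(D,F)$, set $E=F_{D,\ins}$. The same computation as above shows $F_{D,\sep}=L_{K,\sep}$, and combined with the separably disjoint hypothesis $F=F_{D,\sep}F_{D,\ins}$ this gives $EL_{K,\sep}=F$. For $E_{K,\sep}=D$: any element of $E$ separable over $K$ is separable over $D$ by (b) yet purely inseparable over $D$ by construction of $E$, hence lies in $D$ by (c); the inclusion $D\subseteq E_{K,\sep}$ is trivial from $D\subseteq L_{K,\sep}$. For $\psi\circ\varphi=\id$: given $E$, the inclusion $E\subseteq F_{D,\ins}$ was noted above. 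For the reverse inclusion, the key observation is that $F=EL_{K,\sep}$ is separable over $E$, since it is generated over $E$ by elements of $L_{K,\sep}$, which are separable over $K$ and hence over $E$ by (b). Now any $z\in F_{D,\ins}$ is purely inseparable over $D\subseteq E$, hence over $E$, while also separable over $E$ by what we just noted, so $z\in E$ by (c). The ``main obstacle'' is less a computational difficulty than keeping straight which side of each containment is separable and which is purely inseparable; this final argument---switching the base from $D$ to $E$ to exploit separability of $F/E$---is the one step that is not pure bookkeeping.
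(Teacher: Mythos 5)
Your proof is correct and follows essentially the same route as the paper: verify well-definedness of $\varphi$ using that $E/E_{K,\sep}$ is purely inseparable and $L_{K,\sep}/E_{K,\sep}$ is separable, then check $\psi\circ\varphi=\id$ from $EL_{K,\sep}/E$ being separable and $\varphi\circ\psi=\id$ from the separably-disjoint hypothesis. The paper's version is terser---it asserts $(EL_{K,\sep})_{E_{K,\sep},\ins}=E$ and $(F_{D,\ins})_{K,\sep}=D$ with a line of justification each---while you spell out the intermediate containments, but the underlying argument is the same.
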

\begin{proof}
First we show that $\varphi$ is well-defined. Notice that $E/E_{K,\sep}$ is purely inseparable and that $L_{K,\sep}/E_{K,\sep}$ is separable. Hence
we find that $E L_{K,\sep}/E_{K,\sep}$ is separably disjoint.

Let $\psi$ be the proposed inverse as above. We have $\psi(\varphi(E))=(EL_{K,\sep})_{E_{K,\sep},\ins}$, and this is equal to $E$ since
it obviously contains $E$ and $EL_{K,\sep}/E$ is separable. Conversely we have $\varphi(\psi((D,F)))=((F_{D,\ins})_{K,\sep}, F_{D,\ins}
L_{K,\sep})$. One directly finds $(F_{D,\ins})_{K,\sep}=D$. As $F/D$ is separably disjoint, we find $F_{D,\ins} L_{K,\sep})=F$. This shows that both
maps are inverse to each other.
\end{proof}

\subsection{Tate's lemma}

Let $G$ be a compact topological group which acts continuously on a commutative ring $A$ which is endowed with the discrete
topology. This means that
the map $G \times A \to A$ is continuous. For $a \in A$ the map $G \times \{a\} \to A$ is continuous and the image is compact and hence finite. This
shows that all orbits are finite.

\begin{proposition}[Tate] \label{1cta}\index{Tate's lemma}
Let $(G,A)$ be as above. Let $R$ be a domain and let $\sigma, \tau \colon A \to R$ be ring morphisms. Suppose that $\sigma|_{A^G}=\tau|_{A^G}$. Then there
exists $g \in G$
such that $\tau=\sigma \circ g$.  
\end{proposition}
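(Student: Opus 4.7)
The plan is a two-step argument: first I handle the case where the acting group is finite via a polynomial trick, and then I bootstrap to the compact case by a finite-intersection argument, exploiting that $G_a := \{g \in G : \sigma(g(a)) = \tau(a)\}$ is a clopen subset of $G$ for every $a \in A$.

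For the finite case, suppose $G'$ is a finite group acting on a commutative ring $B$ and that $\sigma,\tau\colon B \to R$ agree on $B^{G'}$, with $R$ a domain. Given $b_1,\dots,b_n \in B$, I want a single $g \in G'$ with $\sigma(g(b_i)) = \tau(b_i)$ for every $i$. The key move is to pass to the polynomial ring $B[T_1,\dots,T_n]$ with $G'$ acting trivially on the $T_i$, extend $\sigma$ and $\tau$ by $T_i \mapsto T_i$, and set $b := \sum_i b_i T_i$. The orbit polynomial $P(X) := \prod_{g \in G'}(X - g(b))$ has coefficients in $(B[T])^{G'} = B^{G'}[T]$, hence $\sigma(P) = \tau(P)$. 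Since $P(b) = 0$, applying $\tau$ gives
\[
\prod_{g \in G'} \bigl(\tau(b) - \sigma(g(b))\bigr) = 0 \quad \text{in } R[T_1,\dots,T_n].
\]
As $R[T]$ is a domain, some factor must vanish; but the coefficient of $T_i$ in that factor is $\tau(b_i) - \sigma(g(b_i))$, and a polynomial in $R[T]$ is zero only when every coefficient is zero, so that $g$ works simultaneously for every $b_i$.

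For the compactness reduction, observe first that for any $a \in A$ the map $G \to R$, $g \mapsto \sigma(g(a))$, is continuous into a discrete target, hence locally constant, so $G_a$ is clopen. To verify the finite-intersection property, given $a_1,\dots,a_n \in A$ let $O = \bigcup_i G \cdot a_i$, a finite set by hypothesis, and let $N = \bigcap_{b \in O} \mathrm{Stab}(b)$, a finite intersection of open stabilizers, hence an open subgroup of finite index. Let $B$ be the subring of $A$ generated by $O$; it is $G$-stable and the action of $G$ on $B$ factors through the finite quotient $G' := G/N$. Since $B^G \subseteq A^G$, the hypothesis gives $\sigma|_{B^G} = \tau|_{B^G}$, and the finite case produces $g \in G$ with $\sigma(g(a_i)) = \tau(a_i)$ for all $i$. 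Thus $\{G_a\}_{a \in A}$ is a family of closed subsets of the compact space $G$ with the finite-intersection property, so $\bigcap_{a \in A} G_a \neq \emptyset$, and any $g$ in this intersection satisfies $\tau = \sigma \circ g$.

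The main obstacle is the passage from finitely many separate conditions (one auxiliary $g_i$ for each single $b_i$, immediate from a single orbit polynomial) to a single $g$ satisfying all conditions at once. The auxiliary indeterminates $T_i$ do exactly this: they bundle the conditions into a single polynomial identity that the domain $R[T]$ can satisfy only by making one choice of $g$ work for every $b_i$ at the same time. Everything else—closedness of $G_a$, finite-index reduction, compactness conclusion—is routine.
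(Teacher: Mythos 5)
Your proof is correct and follows essentially the same approach as the paper's: an orbit-polynomial argument (apply both $\sigma$ and $\tau$ to a product over an orbit that lies in the invariants, then compare roots in a domain) followed by a compactness/finite-intersection argument over the closed sets $G_a$. The only cosmetic differences are in the bundling device and the organization — the paper collects a finite set $E$ as coefficients of a single polynomial $f_E \in A[Y]$ and takes $\prod_{h' \in Gf_E}(X - h')$ directly, exploiting finiteness of orbits without ever naming a finite quotient, whereas you encode the elements as a linear form $\sum_i b_i T_i$ and first pass to the finite quotient $G/N$; both choices serve the same purpose of forcing a single $g$ to match all coordinates at once.
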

\begin{proof}
Let $E \subseteq A$ be a finite set. Let $f_E \in A[Y]$ be a polynomial such that all elements of $E$ occur as coefficients of $f_E$.
Extend the action of $G$ to $A[Y][X]$ by letting $G$ act on the coefficients. We extend $\sigma,\tau \colon
A[Y][X] \to R[Y][X]$ by $X
\mapsto X$, $Y \mapsto Y$. Then consider the polynomial $h_E=\prod_{h' \in Gf_E}(X-h')
\in A^G[Y][X]$. We have
\begin{align*}
\prod_{h' \in Gf_E}(X-\sigma(h'))=\sigma(h_E)=\tau(h_E)=\prod_{h' \in Gf_E}(X-\tau(h')).
\end{align*}
As $R[Y]$ is a domain, we can compare the roots and conclude that there is $g \in G$ such that $\tau(h_E)=\sigma(g(h_E)) \in R[Y][X]$. Hence for this
$g$
we have $\tau|_E=\sigma \circ g|_E$.

For any $E \subseteq A$ put $G_E=\{g \in G: \tau|_E=\sigma \circ g|_E\}$. Notice that $G_{\bigcup_{i} E_i}=\bigcap_{i} G_{E_i}$ for any collection of
subsets
$E_i \subseteq A$.
For finite $E$ we have shown $G_E \neq \emptyset$. We claim that for finite $E$ the set $G_E$ is closed in $G$. One
easily shows that for $e \in
E$ the map $\psi_e \colon G \to R$ given by $\psi_e(g)=\sigma(e)-\tau(g(e))$ is continuous. Hence $\psi_e^{-1}(0)=G_{\{e\}}$ is closed. As
$G_E=\bigcap_{e \in E} G_{\{e\}}$, the set $G_E$ is closed.

By compactness of $G$ we have
$G_A=\bigcap_{E \subseteq A,\ E \textrm{ finite}} G_E \neq \emptyset$. This means that there is $g \in G$ such that $\tau=\sigma \circ g$. 
\end{proof}

\begin{corollary} \label{1c312}
Suppose that $(G,A)$ is as above. Let $\pa \subset A^G$ be prime. Then $G$ acts transitively on the set of primes of $A$ lying above $\pa$. 
\end{corollary}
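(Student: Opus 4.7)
Let $\qa,\qa'\subseteq A$ be two primes lying above $\pa$; the goal is to exhibit $g\in G$ with $g(\qa)=\qa'$. The strategy is to reduce to Tate's lemma (Proposition \ref{1cta}) by finding a common domain $R$ and ring maps $\sigma,\tau\colon A\to R$ with kernels $\qa,\qa'$ respectively that agree on $A^G$.

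The first step is the integrality observation that underpins everything: because every $G$-orbit on $A$ is finite, any $a\in A$ is a root of the monic polynomial $\prod_{h\in Ga}(X-h)\in A^G[X]$, so $A$ is integral over $A^G$. Consequently $A/\qa$ and $A/\qa'$ are both integral over the common subring $A^G/(\qa\cap A^G)=A^G/\pa=A^G/(\qa'\cap A^G)$, and their fields of fractions are algebraic extensions of $K:=Q(A^G/\pa)$.

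Now fix an algebraic closure $\overline{K}$ of $K$ and choose any embeddings of $Q(A/\qa)$ and $Q(A/\qa')$ into $\overline{K}$ extending the natural inclusion $K\hookrightarrow \overline{K}$. Composing with the quotient maps yields ring morphisms $\sigma,\tau\colon A\to \overline{K}$ with $\ker\sigma=\qa$ and $\ker\tau=\qa'$. By construction, the restrictions $\sigma|_{A^G}$ and $\tau|_{A^G}$ both factor as $A^G\twoheadrightarrow A^G/\pa\hookrightarrow K\hookrightarrow\overline{K}$, and hence coincide. Taking $R=\overline{K}$ (a domain), Tate's lemma produces $g\in G$ with $\tau=\sigma\circ g$, so
\[
\qa'=\ker\tau=\ker(\sigma\circ g)=g^{-1}(\ker\sigma)=g^{-1}(\qa),
\]
i.e.\ $g(\qa')=\qa$, as required.

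\textbf{Expected obstacle.} There is no serious obstacle: once one notices that $A/A^G$ is integral (an immediate consequence of the finiteness of $G$-orbits, which was already recorded just before Proposition \ref{1cta}), the only task is to arrange a common codomain so that Tate's lemma applies, and an algebraic closure of $Q(A^G/\pa)$ does the job. The mild subtlety to keep in mind is that $\sigma$ and $\tau$ are only defined after choosing embeddings of $Q(A/\qa)$ and $Q(A/\qa')$ into $\overline{K}$, but any choice works since we only need the restrictions to $A^G$ to match, and those restrictions are forced to be the canonical inclusion $A^G/\pa\hookrightarrow K$.
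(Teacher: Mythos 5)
Your proof is correct and follows essentially the same route as the paper's: you construct $\sigma,\tau\colon A\to\overline{Q(A^G/\pa)}$ by composing the quotient maps $A\to A/\qa$, $A\to A/\qa'$ with embeddings of the respective fraction fields into a fixed algebraic closure of $Q(A^G/\pa)$, observe they agree on $A^G$, and invoke Tate's lemma to produce the required $g$. The only cosmetic difference is that you spell out the integrality of $A$ over $A^G$ (via the orbit polynomial) where the paper simply cites the finiteness of orbits to conclude the fraction-field extension is algebraic.
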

\begin{proof}
Let $\qa, \qa' \subset A$ be primes lying above $\pa$. We will now construct two maps from $A$ to $\overline{Q(A^G/\pa)}$, the algebraic closure of
$Q(A^G/\pa)$. Since the orbits of the actions are finite, the extension $Q(A/\qa) \supseteq Q(A^G/\pa)$ is algebraic. Hence there is a morphism 
$\sigma \colon A \to A/\qa \to
Q(A/\qa) \to \overline{Q(A^G/\pa)}$ which is the identity on $A^G/\pa$.
Similarly one defines another map $\tau \colon A \to A/\qa' \to
Q(A/\qa') \to \overline{Q(A^G/\pa)}$. Both maps agree on $A^G$. Proposition \ref{1cta} says that there is $g \in G$ such that $\tau=\sigma g$. Taking
kernels
gives
$\qa'=\ker \tau=\ker(\sigma g)=g^{-1}\left(\ker \sigma \right)=g^{-1}\qa$. We get $g \qa' = \qa$ and this finishes the proof.  
\end{proof}

\begin{corollary} \label{1c313}
Let $(G,A)$ be as above. Let $\qa \subset A$ be a prime lying above a prime $\pa \subset A^G$. Let
$G_{\qa/\pa}=\{g \in G: g(\qa)=\qa \}$. Let $l=Q(A/\qa)$ and let $k=Q(A^G/\pa)$. Then the natural map $G_{\qa/\pa} \to \Aut_k(l)$ is
surjective and $l/k$ is normal algebraic. 
\end{corollary}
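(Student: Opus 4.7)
The plan is to bootstrap the previous Tate-style argument (Proposition \ref{1cta} and Corollary \ref{1c312}) to analyse $\Aut_k(l)$ directly.

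First I would verify that $l/k$ is algebraic. Every $a \in A$ has finite $G$-orbit, so it is a root of the monic polynomial $\prod_{a' \in Ga}(X-a') \in A^G[X]$. Reducing mod $\pa$ shows that each $\bar a \in A/\qa$ is integral over $A^G/\pa$, hence $l = Q(A/\qa)$ is algebraic over $k = Q(A^G/\pa)$.

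Next I would prove simultaneously that $l/k$ is normal and that $G_{\qa/\pa} \to \Aut_k(l)$ is surjective. Fix an algebraic closure $\overline{k}$ of $k$ and view $l \subseteq \overline{k}$. Let $\tau : l \to \overline{k}$ be any $k$-embedding. Let $\sigma : A \to l \subseteq \overline{k}$ be the natural map $a \mapsto \bar a$. Both $\sigma$ and $\tau \circ \sigma$ are ring homomorphisms from $A$ into the domain $\overline{k}$ that agree on $A^G$, since $\sigma(A^G) \subseteq A^G/\pa \subseteq k$ is pointwise fixed by $\tau$. Applying Tate's lemma (Proposition \ref{1cta}) produces $g \in G$ with $\tau \circ \sigma = \sigma \circ g$. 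Taking kernels gives $\qa = \ker(\sigma) = \ker(\tau \circ \sigma) = \ker(\sigma \circ g) = g^{-1}\qa$, so $g \in G_{\qa/\pa}$. Since $\tau(\bar a) = \overline{g(a)}$ for every $a \in A$, the map $\tau$ agrees on the generating set $A/\qa$ of $l$ with the $k$-automorphism $\tilde g$ of $l$ induced by $g$; by multiplicativity $\tau = \tilde g$ on all of $l$. In particular $\tau(l) \subseteq l$, which proves normality, and if we started with $\tau \in \Aut_k(l)$ this exhibits $\tau$ as the image of $g \in G_{\qa/\pa}$, proving surjectivity.

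The only real obstacle is checking that the element $g$ produced by Tate's lemma actually lies in the decomposition subgroup $G_{\qa/\pa}$, and that the identity $\tau \circ \sigma = \sigma \circ g$ on $A$ is enough to determine $\tau$ on all of $l$; both are handled in the paragraph above via kernel comparison and the fact that $A/\qa$ generates $l$ as a field. Everything else is formal bookkeeping with the natural inclusion $k \hookrightarrow l \hookrightarrow \overline{k}$.
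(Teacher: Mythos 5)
Your proof is correct and follows essentially the same route as the paper: apply Tate's lemma (Proposition \ref{1cta}) to the natural map $\sigma\colon A \to l \subseteq \overline{k}$ and its composite with a $k$-embedding of $l$, obtain $g \in G$ with $\tau\sigma = \sigma g$, and conclude both normality and surjectivity at once. Your explicit kernel comparison showing $g \in G_{\qa/\pa}$ is a detail the paper leaves implicit, so nothing more is needed.
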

\begin{proof}
It is easy to see that $l/k$ is algebraic. Let $\overline{k}$ be an algebraic closure of $k$ containing $l$. We have a natural map $G_{\qa/\pa} \to \Aut_k(l) \subseteq \Hom_k(l,\overline{k})$. Let $\varphi \in \Hom_k(l,\overline{k})$. Consider the natural map $\sigma \colon A \to Q(A/\qa)=l \subseteq \overline{k}$, which restricts to the natural map $A^G \to Q(A^G/\pa)=k$. Let
$\tau=\varphi \sigma$. Apply Proposition \ref{1cta} to see that there is $g \in G$ with $\varphi \sigma=\sigma g$. But then for $a \in A$ we have
\begin{align*}
g \circ (\sigma(a))= \sigma (g(a))=\varphi \sigma(a).
\end{align*}
This means that $g$ maps to $\sigma$. It follows that $\Aut_k(l)=\Hom_k(l,\overline{k})$ and hence $l/k$ is normal.
\end{proof}

\subsection{Ordered abelian groups}

\begin{lemma} \label{1c60}
Let $(P,\leq)$ be an ordered abelian group. Let $n \in \Z_{\geq 1}$ and $x,y \in P$. If $nx=ny$, then one has $x=y$. The group $P$ has no
non-trivial
torsion and $P \otimes_{\Z} \Q$ is an ordered abelian group where we put $x \leq y$ if for all $n \in \Z_{\geq 1}$ such that $nx, ny \in
P$ we have $nx \leq ny$. 
\end{lemma}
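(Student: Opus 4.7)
The plan is to first establish multiplicative cancellation in $P$ and then transfer the order structure to $P \otimes_\Z \Q$. Throughout, write $x < y$ for $x \leq y$ with $x \neq y$, and note the elementary observation that $x < y$ implies $x + c < y + c$ for every $c$: the non-strict part is axiom (iv), and equality $x + c = y + c$ would force $x = y$ by adding $-c$ and using axiom (iv) again together with antisymmetry (axiom i).

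Next I would show by induction on $n \in \Z_{\geq 1}$ that $x < y$ implies $nx < ny$. The base case is trivial. For the step, from $nx < ny$ and $x < y$ the observation above gives $nx + x < ny + x$ and $ny + x < ny + y$; transitivity yields $(n+1)x \leq (n+1)y$, and strictness holds because equality $(n+1)x = (n+1)y$ combined with $nx + x < ny + x \leq ny + y$ would contradict axiom (i). With this, cancellation $nx = ny \Rightarrow x = y$ is immediate: if $x \neq y$, totality (axiom iii) gives $x < y$ or $y < x$, and either case produces $nx \neq ny$. In particular, $nx = 0 = n \cdot 0$ forces $x = 0$, so $P$ is torsion-free.

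For the extension to $P \otimes_\Z \Q$: since $P$ is torsion-free, the canonical map $P \to P \otimes_\Z \Q$ is injective, and every element of $P \otimes_\Z \Q$ admits a representation of the form $x/n$ with $x \in P$, $n \in \Z_{\geq 1}$. I would first check the relation is well-defined, which reduces to showing that if $nx, ny \in P$ and $mx, my \in P$, then $nx \leq ny$ iff $mx \leq my$: indeed, the induction above (applied contrapositively via totality and cancellation) yields $a \leq b$ iff $ka \leq kb$ for any $a, b \in P$ and $k \in \Z_{\geq 1}$, and passing through the common witness $mnx, mny$ handles both comparisons. The four axioms for the extended relation then all reduce to clearing denominators and invoking the corresponding axiom in $P$: antisymmetry uses torsion-freeness of $P \otimes_\Z \Q$, while transitivity, totality, and translation invariance are immediate after picking a single $n$ that clears all denominators involved.

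The main obstacle is the strict-monotonicity induction in the second paragraph; everything else is bureaucratic, once cancellation is secured. The only subtle point in the extension to $P \otimes_\Z \Q$ is the well-definedness, and that too follows from the same cancellation-plus-monotonicity machinery.
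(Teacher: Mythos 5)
Your proposal is correct and follows essentially the same route as the paper: strict monotonicity $x<y\implies nx<ny$ (the paper does this in one line, you by induction), cancellation and torsion-freeness as immediate consequences, and a routine denominator-clearing verification for $P\otimes_{\Z}\Q$, which the paper simply leaves to the reader. Your write-up merely supplies the details the paper omits.
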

\begin{proof}
Suppose that $x<y$. Then $x+x <x+y<y+y$, and in a similar fashion, $nx<ny$, which is a contradiction. 

If $x$ is torsion, apply the first part to $x$ and $0$ to obtain the second result.  

The last part is an easy calculation which is left to the reader.
\end{proof}

Let $(P,\leq)$ and $(Q,\leq)$ be ordered abelian groups. A morphism $\varphi \colon P \to Q$ is a group homomorphism respecting the ordering. One easily
sees that respecting the order is equivalent to $p \geq 0$ $\implies$ $\varphi(p) \geq 0$. Indeed, let $p, p' \in P$ with $p \geq p'$. Then we have
$p-p' \geq 0$, which gives $\varphi(p)-\varphi(p')=\varphi(p-p') \geq 0$. This gives
$\varphi(p) \geq \varphi(p')$.

\begin{lemma} \label{1c466}
 Let $(P,\leq)$ be an ordered abelian group and let $\varphi \in \Aut(P)$ such that all orbits are finite. Then $\varphi$ is the identity.
\end{lemma}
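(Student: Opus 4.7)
The plan is to fix an arbitrary $p \in P$ and show $\varphi(p) = p$ by exploiting the trichotomy of the order together with strict monotonicity of $\varphi$. The only subtlety is noting that an order-preserving group automorphism is automatically strictly order-preserving: if $p < q$ then $\varphi(p) \leq \varphi(q)$ by order preservation, and $\varphi(p) \neq \varphi(q)$ by injectivity, so $\varphi(p) < \varphi(q)$.

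First I would suppose for contradiction that $\varphi(p) \neq p$ for some $p$. By total order we may assume $\varphi(p) > p$ (the case $\varphi(p) < p$ is handled symmetrically, or reduced to the first case by replacing $\varphi$ with $\varphi^{-1}$, which is also an order-preserving automorphism with finite orbits since it has the same orbits as $\varphi$). Applying the strictly order-preserving map $\varphi$ to the inequality $\varphi(p) > p$ yields $\varphi^2(p) > \varphi(p)$, and by a trivial induction $\varphi^{n+1}(p) > \varphi^n(p)$ for every $n \geq 0$. Consequently the elements $p, \varphi(p), \varphi^2(p), \ldots$ form a strictly increasing, hence infinite, sequence — contradicting the hypothesis that the $\varphi$-orbit of $p$ is finite.

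Therefore $\varphi(p) = p$ for every $p \in P$, i.e.\ $\varphi = \id_P$. There is no real obstacle; the argument is just the observation that an order automorphism cannot have any periodic point other than a fixed point on a totally ordered set.
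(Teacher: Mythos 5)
Your proof is correct and is essentially the same argument as the paper's: both exploit that an order-preserving automorphism iterates monotonically, so a finite (hence periodic) orbit forces $\varphi(p)=p$. The paper phrases it by sandwiching $p=\varphi^n(p)\geq\cdots\geq\varphi(p)\geq p$ to get equality directly, while you run the equivalent contradiction via a strictly increasing infinite orbit; the difference is only cosmetic.
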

\begin{proof}
Let $p \in P$ and assume that $\varphi^n(p)=p$. Then one has $p=\varphi^n(p) \geq \ldots \geq \varphi(p)
\geq p$. Hence we obtain $\varphi(p)=p$. 
\end{proof}

\section{Extending valuations}

\begin{lemma} \label{1c944}
 Let $(K,v)$ be a valued field. Then $\mathcal{O}_v$ is integrally closed.
\end{lemma}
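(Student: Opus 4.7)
The plan is to give the standard one-line argument: any element of $K$ integral over $\mathcal{O}_v$ must already lie in $\mathcal{O}_v$, using only the defining property of a valuation ring (that for every $x \in K^*$, either $x$ or $x^{-1}$ lies in $\mathcal{O}_v$). Note that since $\mathcal{O}_v \subseteq Q(\mathcal{O}_v) \subseteq K$, showing this property over all of $K$ certainly yields integral closedness in the fraction field, which is what ``integrally closed'' means.

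So, let $x \in K$ be integral over $\mathcal{O}_v$, and choose a monic relation
\[
x^n + a_{n-1}x^{n-1} + \cdots + a_1 x + a_0 = 0, \quad a_i \in \mathcal{O}_v.
\]
If $x=0$ we are done, so assume $x \in K^*$. If $x \in \mathcal{O}_v$ there is nothing to prove, so assume instead that $x^{-1} \in \mathcal{O}_v$. Dividing the integral relation by $x^{n-1}$ (which is allowed since $x \neq 0$) and solving for $x$ gives
\[
x = -a_{n-1} - a_{n-2}x^{-1} - \cdots - a_1 x^{-(n-2)} - a_0 x^{-(n-1)}.
\]
Each summand on the right is a product of elements of $\mathcal{O}_v$, so the right-hand side lies in $\mathcal{O}_v$, forcing $x \in \mathcal{O}_v$.

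There is no real obstacle here; the argument uses neither the ordering, residue field, nor value group in any subtle way. The only point to flag is the logical structure: the definition of a valuation ring provides the dichotomy $x \in \mathcal{O}_v$ or $x^{-1} \in \mathcal{O}_v$ for every $x \in K^*$, and the integral equation is used precisely to collapse the second alternative into the first.
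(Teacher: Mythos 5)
Your proof is correct and is essentially the same argument as the paper's: both divide the monic integral relation by $x^{n-1}$ to express $x$ as an $\mathcal{O}_v$-polynomial in $x^{-1}$, then invoke the dichotomy $x \in \mathcal{O}_v$ or $x^{-1} \in \mathcal{O}_v$. The paper merely packages this as the observation that $x \in \mathcal{O}_v[x] \cap \mathcal{O}_v[x^{-1}] = \mathcal{O}_v$, whereas you write out the two cases explicitly.
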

\begin{proof}
Suppose $x \in \mathcal{O}_v$ nonzero is integral over $\mathcal{O}_v$. Then there is a relation $x^n+a_{n-1}x^{n-1}+\ldots+a_0=0$ with $a_i \in \mathcal{O}_v$ and this shows that $x \in \mathcal{O}_v[x] \cap \mathcal{O}_v[x^{-1}]$. By the definition of a valuation ring we have $\mathcal{O}_v[x] \cap \mathcal{O}_v[x^{-1}]=\mathcal{O}_v$ and the result follows.
\end{proof}

\begin{proposition} \label{1c143}
Let $K$ be a field. Let $R \subseteq K$ be a subring and let $\pa \in \Spec(R)$. Let $S=\{(A,I): R \subseteq A \subseteq K,\ A
\textrm{ ring},\ I \subseteq A \textrm{\ ideal},\ I \cap R=\pa\}$, ordered by $(A,I) \leq (B,J)$ if $A \subseteq B$ and $I \subseteq J$. Then a pair
$(\mathcal{O},\m)$ is maximal if and only if $\mathcal{O}$ is a valuation ring of $K$ and $\m$ is its maximal ideal.
\end{proposition}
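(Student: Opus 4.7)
The plan is to prove both implications of the characterization, treating the easy direction first and then spending the bulk of the argument on the converse.

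For the easy direction ($\Leftarrow$), assume $\mathcal{O}$ is a valuation ring of $K$ with maximal ideal $\mathfrak{m}$ (satisfying $\mathfrak{m} \cap R = \mathfrak{p}$), and suppose $(A, I) \in S$ with $(\mathcal{O}, \mathfrak{m}) \leq (A, I)$. If some $x \in A \setminus \mathcal{O}$ existed, then $x^{-1} \in \mathcal{O}$ is a non-unit, hence $x^{-1} \in \mathfrak{m} \subseteq I$; multiplying by $x \in A$ gives $1 \in I$, contradicting $I \cap R = \mathfrak{p}$. So $A = \mathcal{O}$, and then $I$ is a proper ideal containing $\mathfrak{m}$, forcing $I = \mathfrak{m}$.

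For the hard direction ($\Rightarrow$), assume $(\mathcal{O}, \mathfrak{m})$ is maximal in $S$. I would proceed in three steps.

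\emph{Step 1: $\mathfrak{m}$ is a prime ideal of $\mathcal{O}$.} If $a, b \in \mathcal{O}$ with $ab \in \mathfrak{m}$ but $a, b \notin \mathfrak{m}$, then both ideals $\mathfrak{m} + (a)$ and $\mathfrak{m} + (b)$ strictly contain $\mathfrak{m}$. By maximality of $(\mathcal{O}, \mathfrak{m})$, their intersections with $R$ strictly contain $\mathfrak{p}$, so pick $c \in (\mathfrak{m} + (a)) \cap R \setminus \mathfrak{p}$ and $d \in (\mathfrak{m} + (b)) \cap R \setminus \mathfrak{p}$. Then $cd \in \bigl(\mathfrak{m} + (a)\bigr)\bigl(\mathfrak{m} + (b)\bigr) \subseteq \mathfrak{m} + (ab) = \mathfrak{m}$, so $cd \in \mathfrak{m} \cap R = \mathfrak{p}$, contradicting that $\mathfrak{p}$ is prime.

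\emph{Step 2: $\mathcal{O}$ is local with maximal ideal $\mathfrak{m}$.} Form the localization $\mathcal{O}_{\mathfrak{m}} \subseteq K$; its maximal ideal $\mathfrak{m}\mathcal{O}_{\mathfrak{m}}$ contracts to $\mathfrak{m}$ in $\mathcal{O}$ (using Step 1), hence to $\mathfrak{p}$ in $R$. Thus $(\mathcal{O}_{\mathfrak{m}}, \mathfrak{m}\mathcal{O}_{\mathfrak{m}}) \in S$ dominates $(\mathcal{O}, \mathfrak{m})$, and maximality forces equality.

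\emph{Step 3: $\mathcal{O}$ is a valuation ring of $K$.} Given $x \in K^*$, I want to show $x \in \mathcal{O}$ or $x^{-1} \in \mathcal{O}$. The key claim is that at least one of $\mathfrak{m}\mathcal{O}[x]$ and $\mathfrak{m}\mathcal{O}[x^{-1}]$ is a proper ideal. Once this is shown, take a maximal ideal $\mathfrak{n}$ of (say) $\mathcal{O}[x]$ containing $\mathfrak{m}\mathcal{O}[x]$; then $\mathfrak{n} \cap \mathcal{O}$ is prime and contains $\mathfrak{m}$, so by Step 2 equals $\mathfrak{m}$, giving $\mathfrak{n} \cap R = \mathfrak{p}$. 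Thus $(\mathcal{O}[x], \mathfrak{n}) \in S$ dominates $(\mathcal{O}, \mathfrak{m})$, and maximality yields $\mathcal{O}[x] = \mathcal{O}$, i.e., $x \in \mathcal{O}$.

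The main obstacle is the claim itself. Assume for contradiction that both ideals are improper, so
\begin{align*}
1 &= a_0 + a_1 x + \cdots + a_n x^n, \\
1 &= b_0 + b_1 x^{-1} + \cdots + b_m x^{-m},
\end{align*}
with all $a_i, b_j \in \mathfrak{m}$ and $n, m$ chosen minimal; by symmetry take $n \geq m$. Since $\mathcal{O}$ is local, $1 - b_0$ is a unit, so the second relation rewrites as $x^m = c_1 x^{m-1} + \cdots + c_m$ with $c_j \in \mathfrak{m}$. Substituting $x^n = x^{n-m}(c_1 x^{m-1} + \cdots + c_m)$ into the first relation produces a representation of $1$ in $\mathfrak{m}\mathcal{O}[x]$ of degree at most $n-1$, contradicting minimality (the base case $n = 0$ is immediate since then $1 = a_0 \in \mathfrak{m}$). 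This establishes the claim and completes Step 3.

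Combining Steps 1--3 shows $\mathcal{O}$ is a valuation ring of $K$ with maximal ideal $\mathfrak{m}$, finishing the proof.
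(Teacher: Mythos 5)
Your proof is correct, and its core coincides with the paper's: the easy direction is handled the same way, and the crux of the converse is the identical minimal-degree trick showing that $\m\mathcal{O}[x]$ and $\m\mathcal{O}[x^{-1}]$ cannot both be the unit ideal. Where you genuinely diverge is in the preliminary reduction. The paper first observes that maximality forces $\mathcal{O}=\mathcal{O}_{\pa}$ and $\m=\m_{\pa}$ (localization at $R\setminus\pa$), and deduces that $\m$ is a maximal ideal because it lies over $\pa R_{\pa}$; you instead prove directly that $\m$ is prime via the elementary product argument with $\m+(a)$ and $\m+(b)$, and then localize at $\m$ itself to conclude that $\mathcal{O}$ is local with maximal ideal $\m$. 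Your route buys two things: it is self-contained (no lying-over considerations through $R_{\pa}$), and it explicitly establishes locality of $\mathcal{O}$, which is exactly what justifies the step ``$1-b_0\in\mathcal{O}^*$'' in the degree-lowering argument and what makes the contraction $\na\cap\mathcal{O}=\m$ immediate for a maximal ideal $\na\supseteq\m\mathcal{O}[x]$; the paper's version, which only records that $\m$ is a maximal ideal of $\mathcal{O}$, leaves that unit claim slightly implicit. The paper's route is shorter and shows more directly how the condition $I\cap R=\pa$ propagates through $R_{\pa}$. Both arguments are complete and compatible with the rest of the paper.
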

\begin{proof}
Let $(\mathcal{O},\m)$ be a maximal element of $S$. Notice that $\m_{\pa} \subset \mathcal{O}_{\pa}$ satisfies $\m_{\pa} \cap R_{\pa}=\pa R_{\pa}$
and $\m_{\pa} \cap R=\pa$. Hence by maximality we have $\mathcal{O}=\mathcal{O}_{\pa}$ and $\m=\m_{\pa}$. A maximal ideal of $\mathcal{O}$ containing
$\m$ still lies above the maximal ideal of $R_{\pa}$. We conclude that $\m$ is maximal.

We claim that $\mathcal{O}$ is a valuation
ring. Suppose that there is $x \in K^*$ with $x, x^{-1} \not \in
\mathcal{O}$. From the maximality and the fact that $\m$ lies above $\pa R_{\pa}$ one obtains $\m \mathcal{O}[x]=\mathcal{O}[x]$ and $\m
\mathcal{O}[x^{-1}]=\mathcal{O}[x^{-1}]$. Take $n, m$ minimal
such that $1=\sum_{i=0}^n a_i x^i$, $1=\sum_{i=0}^m b_i x^{-i}$ with $a_i, b_i \in \m$. Without loss of generality, assume $m \leq n$. Multiply the
second equation by $x^n$, and notice that $1-b_0 \in \mathcal{O}^*$, to obtain $x^n=\frac{1}{1-b_0} \sum_{i=1}^m b_i x^{n-i}$. Use this relation
together with the first relation to see that $n$ is not minimal, contradiction.

Conversely, suppose that $\mathcal{O}$ is a valuation ring of $K$ with maximal ideal $\m$, containing $R$ and satisfying $\m \cap
R=\pa$. Suppose that $(\mathcal{O},\m)
\leq (A,\na)$. Let $x \in A$ nonzero. Then $x x^{-1}=1 \not \in \na$ and hence $x^{-1} \not \in \m$. As $\mathcal{O}$ is a valuation ring, we obtain
$x \in \mathcal{O}$. Hence $(\mathcal{O},\m)$ is maximal.  
\end{proof}

Since we assume the Axiom of Choice, maximal elements as in Proposition \ref{1c143} exist.

\begin{corollary} \label{1c177}
 Let $R \subseteq L$ be a subring where $L$ is a field. Then the intersection of all valuation rings of $L$ containing $R$ in $L$ is the integral
closure of
$R$ in $L$.
\end{corollary}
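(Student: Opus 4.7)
The statement is a standard Krull-type theorem: a non-integral element can always be excluded by some valuation ring. The plan is to prove the two inclusions separately, using the machinery of Proposition \ref{1c143} for the harder direction.

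One inclusion is immediate. If $x \in L$ is integral over $R$, then it is integral over any valuation ring $\mathcal{O} \subseteq L$ containing $R$, and Lemma \ref{1c944} says such $\mathcal{O}$ is integrally closed in $L$; hence $x \in \mathcal{O}$. So every integral element lies in every valuation ring of $L$ containing $R$.

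For the reverse inclusion, given $x \in L$ not integral over $R$ (so in particular $x \neq 0$), I want to produce a valuation ring of $L$ containing $R$ but missing $x$. The key trick is to pass to $R' = R[x^{-1}] \subseteq L$ and observe that $x^{-1}$ is not a unit in $R'$: otherwise $x \in R'$, so $x = \sum_{i=0}^n a_i x^{-i}$ with $a_i \in R$, and multiplying through by $x^n$ yields a monic polynomial relation showing that $x$ is integral over $R$, a contradiction. Hence $x^{-1}$ lies in some maximal ideal $\m'$ of $R'$, and the poset from Proposition \ref{1c143} of pairs $(A,I)$ with $R' \subseteq A \subseteq L$ and $I \cap R' = \m'$ is nonempty (it contains $(R',\m')$). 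By Zorn's lemma it has a maximal element, which by Proposition \ref{1c143} is a pair $(\mathcal{O},\m)$ with $\mathcal{O}$ a valuation ring of $L$ and $\m$ its maximal ideal. Then $R \subseteq R' \subseteq \mathcal{O}$ and $x^{-1} \in \m$, so $x \notin \mathcal{O}$, because otherwise $1 = x \cdot x^{-1} \in \m$, a contradiction.

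The only genuine difficulty lies in the reverse inclusion. The clever maneuver is the passage from $R$ to $R[x^{-1}]$: this converts the non-integrality of $x$ into the non-invertibility of $x^{-1}$, after which the existence result for valuation rings (Proposition \ref{1c143}, itself an application of Zorn's lemma) finishes the argument.
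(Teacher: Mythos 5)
Your proof is correct and follows essentially the same route as the paper's: the easy inclusion via Lemma \ref{1c944}, and for the hard inclusion, passing to $R[x^{-1}]$, noting $x^{-1}$ is a non-unit there (equivalently $x \notin R[x^{-1}]$, which is where non-integrality is used), choosing a maximal ideal containing $x^{-1}$, and invoking Proposition \ref{1c143} to obtain a valuation ring whose maximal ideal contains $x^{-1}$ and which therefore excludes $x$. Your write-up is a bit more explicit about why $x^{-1}$ cannot be a unit, but the argument is identical in substance.
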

\begin{proof}
As a valuation ring is integrally closed (Lemma \ref{1c944}), the right hand side is contained in the left hand side. Suppose
$x \in L$ is not integral over $R$. Consider the
ring
$R[x^{-1}]$, which does not contain $x$ as $x$ is not integral. Hence $x^{-1}$ is
contained in a maximal ideal $\m \subset R[x^{-1}]$. Proposition \ref{1c143} gives us a valuation $v$ with $x^{-1} \in \m_v \cap R[x^{-1}]=\m$. This
is equivalent to $x \not \in \mathcal{O}_v$.
\end{proof}

\begin{proposition} \label{1c434}
 Let $(K,v)$ be a valued field and let $L/K$ be an algebraic extension of fields. Let $R$ be the integral closure of
$\mathcal{O}_v$ in $L$. Then there is a bijection between the set of maximal ideals of $R$ and the set of valuations extending
$v$ to $L$, given by $\m \mapsto R_{\m}$. The inverse maps a valuation $\mathcal{O}$ with maximal ideal $\m$ to $\m \cap R$. 
\end{proposition}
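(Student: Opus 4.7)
The plan is to set up two mutually inverse maps, $\Psi\colon w\mapsto \m_w\cap R$ and $\Phi\colon \m\mapsto R_\m$, and to verify that they are well-defined and inverse to each other. The content of the proposition lies principally in showing that the localization $R_\m$ is a valuation ring of $L$.

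First, I would check that $\Psi$ lands in maximal ideals. For a valuation $w$ of $L$ extending $v$, the ring $\mathcal{O}_w$ contains $\mathcal{O}_v$ and is integrally closed by Lemma~\ref{1c944}, so $R\subseteq \mathcal{O}_w$. Then $\m_w\cap R$ is a prime of $R$ lying over $\m_w\cap \mathcal{O}_v=\m_v$. Since $R$ is integral over the local ring $\mathcal{O}_v$, the quotient $R/(\m_w\cap R)$ is a domain integral over the field $\mathcal{O}_v/\m_v$, hence itself a field; thus $\m_w\cap R$ is maximal.

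Next, given $\m$ maximal in $R$, I would apply Proposition~\ref{1c143} to the subring $R\subseteq L$ and the prime $\m$ to produce a maximal pair $(\mathcal{O}_w,\m_w)$, which by that proposition consists of a valuation ring of $L$ together with its maximal ideal, and which satisfies $\mathcal{O}_w\supseteq R$ and $\m_w\cap R=\m$. The same integrality argument shows $\m\cap \mathcal{O}_v=\m_v$, so $w$ extends $v$. The inclusion $R_\m\subseteq \mathcal{O}_w$ is immediate because $R\setminus\m\subseteq \mathcal{O}_w^*$. The composition identity $\m R_\m\cap R=\m$ is then a formal property of localization.

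The main obstacle is showing $\mathcal{O}_w=R_\m$, which also yields the uniqueness of $w$ given $\m$. My strategy is to prove that $R_\m$ is itself a valuation ring of $L$; given this, the inclusion $R_\m\subseteq \mathcal{O}_w$ of valuation rings of $L$ with matching maximal ideals $\m R_\m=\m_w\cap R_\m$ (the latter equality holding because $R_\m$ is local) forces equality, since an inclusion of valuation rings of a common field whose maximal ideals coincide is trivial. To establish that $R_\m$ is a valuation ring, I adapt the polynomial-contradiction argument from the proof of Proposition~\ref{1c143}. Suppose $x\in L^*$ with $x,x^{-1}\notin R_\m$. A maximality/extension argument (analogous to the one in Proposition~\ref{1c143}, with the local subring $R_\m$ and its unique maximal ideal $\m R_\m$ in place of the original pair) shows that these hypotheses force both $\m R_\m\cdot R_\m[x]=R_\m[x]$ and $\m R_\m\cdot R_\m[x^{-1}]=R_\m[x^{-1}]$: otherwise the proper ideal above $\m R_\m$ could be lifted via Proposition~\ref{1c143} to a valuation ring strictly extending $R_\m$ inside $\mathcal{O}_w$ while still meeting $R$ in $\m$, violating the already-established maximality of $(\mathcal{O}_w,\m_w)$. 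Clearing denominators, these two equalities give relations $s=\sum_{i=0}^n m_i x^i$ and $s'=\sum_{j=0}^m m'_j x^{-j}$ with $s,s'\in R\setminus\m$, $m_i,m'_j\in\m$, and $n\geq m$ minimal. Since $s'-m'_0\in R\setminus\m\subseteq R_\m^*$, the second relation rearranges to $x^m=(s'-m'_0)^{-1}\sum_{j\geq 1}m'_j x^{m-j}$; substituting into the first relation after multiplication by $x^{n-m}$ and clearing the factor $s'-m'_0$ yields a new relation $\tilde s=\sum_{i=0}^{n-1}\tilde m_i x^i$ with $\tilde s=(s'-m'_0)s\in R\setminus\m$ and $\tilde m_i\in\m$, contradicting the minimality of $n$ and completing the proof.
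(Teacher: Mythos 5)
Your set-up (the two maps, well-definedness of $w\mapsto\m_w\cap R$ via integrality over the local ring $\mathcal{O}_v$, the extension $\m\cap\mathcal{O}_v=\m_v$, and the inclusion $R_\m\subseteq\mathcal{O}_w$) is fine and matches the paper. But the central step — showing $R_\m$ is a valuation ring — contains a genuine gap, and in fact the assertion you lean on is false.

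You claim that $x,x^{-1}\notin R_\m$ forces both $\m R_\m\cdot R_\m[x]=R_\m[x]$ and $\m R_\m\cdot R_\m[x^{-1}]=R_\m[x^{-1}]$, justified by ``maximality of $(\mathcal{O}_w,\m_w)$''. This does not follow. Proposition~\ref{1c143} produces \emph{a} maximal pair, not \emph{the} maximal pair; the poset $S$ attached to $(R,\m)$ may have many maximal elements (a priori, many valuation rings lying over $\m$ --- uniqueness is precisely what the proposition is asserting). If $\m R_\m\cdot R_\m[x]\neq R_\m[x]$, lifting a prime of $R_\m[x]$ above $\m R_\m$ to a maximal pair $(\mathcal{O}',\m')$ yields \emph{some} valuation ring over $\m$ containing $x$; it need not be contained in $\mathcal{O}_w$ (indeed it contains $x$, which may lie outside $\mathcal{O}_w$), and even if it were, a strictly \emph{smaller} or incomparable member of $S$ does not contradict maximality of $(\mathcal{O}_w,\m_w)$. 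Worse, one of your two equalities is automatically false: $\mathcal{O}_w$ is a valuation ring, so without loss of generality $x\in\mathcal{O}_w$; then $R_\m[x]\subseteq\mathcal{O}_w$ and $\m R_\m\cdot R_\m[x]\subseteq\m_w\cap R_\m[x]\subsetneq R_\m[x]$, so the hypothesis feeding your polynomial argument never holds. Note also that nowhere do you use that $L/K$ is algebraic, yet the statement fails badly without it: for $R=k[x,y]$ and $\m=(x,y)$, the ring $R_\m$ is integrally closed but certainly not a valuation ring.

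The paper's argument is structurally different and uses algebraicity in an essential way: given $a\in\mathcal{O}_w$, the algebraicity of $L/K$ gives a polynomial $f=\sum a_ix^i\in\mathcal{O}_v[x]$ with $f(a)=0$ and some $a_i\in\mathcal{O}_v^*$; splitting $f$ at the first index $k$ from which all higher coefficients lie in $\m_v$ yields $b=f_0(a)a^{-k+1}$ and $c=f_1(a)$ with $a=b/c$, and one verifies $b,c\in R$ (they lie in every valuation ring containing $R$, by Corollary~\ref{1c177}) and $c\notin\m_w$, hence $c\notin\m$ and $a\in R_\m$. That shows $\mathcal{O}_w\subseteq R_\m$ directly; no separate proof that $R_\m$ is a valuation ring is attempted. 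If you want to repair your proof, you should replace the ``maximality'' argument for the two equalities by something that actually uses the algebraicity of $L/K$ --- or simply show $\mathcal{O}_w\subseteq R_\m$ as the paper does.
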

\begin{proof}
Let $\pa \in \MaxSpec(R)$. Then by Proposition \ref{1c143} there exists a valuation ring $\mathcal{O}_w$ of $L$ with
$\mathcal{O}_w \supseteq R_{\pa}$ and $\m_w \cap R=\pa$. We will show $R_{\pa}=\mathcal{O}_w$. 

Let $a \in \mathcal{O}_w$ nonzero. As $L/K$ is algebraic, there exists is a polynomial $f=\sum_{i=0}^n a_i x^i \in \mathcal{O}_v[x]$ with
$f(a)=0$ and a coefficient which is not in the maximal
ideal. Let $k$ minimal such that $a_{k+1},\ldots,a_{n} \in \m_v$. Put $f_0=a_0+\ldots+a_{k-1}x^{k-1}$ and $-f_1=a_k+\ldots+a_nx^{n-k}$. Note that
$f_1(a) \in \mathcal{O}_w^*$. Then from $0=f(a)=f_0(a)-a^kf_1(a)$ we obtain for $b=f_0(a)a^{-k+1}
\in \mathcal{O}_v[a^{-1}]$, $c=f_1(a) \in \mathcal{O}_v[a] \setminus \{0\}$ that $a=\frac{b}{c}$.  We claim: $b, c \in R$. It is
enough to show that $b$ and $c$ are contained in any valuation ring extending $R$ (Corollary \ref{1c177}). Let $\mathcal{O}$ be such a valuation ring.
If $a \in
\mathcal{O}$, then one has $c \in \mathcal{O}$ and hence $b=ac \in \mathcal{O}$. If $a \not \in
\mathcal{O}$, then one has $a^{-1} \in \mathcal{O}$. Hence $b \in \mathcal{O}$ and $c=b a^{-1} \in \mathcal{O}$. This finishes the proof of the
claim. Furthermore, by construction we have $c \not \in \m_w$. Hence $c \not \in \m_w \cap R=\pa$. We see that $a=\frac{b}{c}\in
R_{\pa}$. This gives $R_{\pa}=\mathcal{O}_w$ and this shows that the proposed map is well-defined.

Suppose $w$ extends $v$ to $L$. We want to show that $\m_w \cap R$ is a maximal ideal of $R$. But $\m_w \cap \mathcal{O}_v$ is maximal, and
$\mathcal{O}_v \to R$
is integral. Hence by \cite[Corollary 5.8]{AT} $\m_w \cap R$ is a maximal ideal of $R$. This shows that the proposed inverse is well-defined.

Note that for $\pa \in \MaxSpec(R)$ we have $\pa = \pa R_{\pa} \cap R$. Furthermore, we have already seen $R_{\m_w \cap R}=\mathcal{O}_w$.
This shows that both maps are inverse to each other.
\end{proof}

We will now prove a weak approximation theorem.

\begin{corollary} \label{1c63}
 Let $(K,v)$ be a field and let $L/K$ be an algebraic field extension. Let $w_1,\ldots,w_n$ be different extensions of $v$ to $L$. Let $(a_i)_{i=1}^n \in
\prod_{i=1}^n \mathcal{O}_{w_i}$ and $r_1,\ldots,r_n \in \Z_{\geq 1}$ be given. Then there exists $a \in L$ with $a-a_i \in \m_{w_i}^{r_i}$ for
$i=1,\ldots,n$.
\end{corollary}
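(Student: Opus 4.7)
The plan is to reduce the problem to the Chinese Remainder Theorem applied to the integral closure $R$ of $\mathcal{O}_v$ in $L$, using Proposition \ref{1c434} to translate the extensions $w_i$ into distinct maximal ideals of $R$.

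By Proposition \ref{1c434}, the valuations $w_1,\ldots,w_n$ correspond to distinct maximal ideals $\m_1,\ldots,\m_n$ of $R$ with $\mathcal{O}_{w_i}=R_{\m_i}$ and $\m_{w_i}\cap R=\m_i$, so $\m_{w_i}^{r_i}\cap R_{\m_i}=\m_i^{r_i}R_{\m_i}$. The first step is to replace each $a_i\in\mathcal{O}_{w_i}$ by an element of $R$. I would argue that any prime of $R$ containing $\m_i^{r_i}$ must contain and hence equal the maximal ideal $\m_i$, so $R/\m_i^{r_i}$ is a local ring with maximal ideal $\m_i/\m_i^{r_i}$; the elements of $R\setminus\m_i$ are then units in $R/\m_i^{r_i}$, and the localisation map
\begin{align*}
R/\m_i^{r_i}\longrightarrow R_{\m_i}/\m_i^{r_i}R_{\m_i}=\mathcal{O}_{w_i}/\m_{w_i}^{r_i}
\end{align*}
is an isomorphism. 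In particular I can choose $\tilde{a}_i\in R$ with $\tilde{a}_i\equiv a_i\pmod{\m_{w_i}^{r_i}}$.

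The second step is to glue the $\tilde{a}_i$. Distinct maximal ideals satisfy $\m_i+\m_j=R$, and I would check that comaximality is preserved under taking powers by expanding a high enough power of the relation $1\in\m_i+\m_j$, giving $\m_i^{r_i}+\m_j^{r_j}=R$ for all $i\ne j$. The Chinese Remainder Theorem (which requires no Noetherian hypothesis) then produces a surjection $R\to\prod_i R/\m_i^{r_i}$, yielding $a\in R$ with $a\equiv\tilde{a}_i\pmod{\m_i^{r_i}}$ for every $i$. Then $a-a_i\in\m_{w_i}^{r_i}$, as desired.

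The main conceptual point — and the only step that takes more than bookkeeping — is the local-ring observation in the first reduction, which is what allows one to realise every residue class in $\mathcal{O}_{w_i}/\m_{w_i}^{r_i}$ by an element of $R$ without assuming $R$ is Noetherian. Everything else (the dictionary with maximal ideals, comaximality of powers, CRT) is purely formal.
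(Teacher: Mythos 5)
Your proof is correct and takes essentially the same route as the paper: reduce to the Chinese Remainder Theorem for the integral closure $R$ of $\mathcal{O}_v$ in $L$, via the dictionary of Proposition~\ref{1c434}. The paper's version simply asserts without comment the two facts you carefully verify, namely the identification $R/\m_i^{r_i}\cong\mathcal{O}_{w_i}/\m_{w_i}^{r_i}$ and the comaximality of the ideals $\m_i^{r_i}$ in the possibly non-Noetherian ring $R$.
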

\begin{proof}
Let $R$ be the integral closure of $\mathcal{O}_v$ in $L$. Proposition \ref{1c434} gives us maximal ideals $\m_i \in \MaxSpec(R)$ with
$R_{\m_i}=\mathcal{O}_{w_i}$. Using the Chinese remainder theorem, one obtains a surjective map $R \to \prod_{i=1}^n R/\m_i^{r_i}=\prod_{i=1}^n
\mathcal{O}_{w_i}/\m_{w_i}^{r_i}$ and the result follows. 
\end{proof}

\begin{proposition} \label{1c55}
Let $(K,v)$ be a valued field and let $L/K$ be a field extension. Then one has $1 \leq \gv_{L,v}$ and $\gv_{L,v}=1$ if $L/K$ is purely
inseparable. If $(L,x)$ a finite extension of $(K,v)$, then one has $\ev(x/v)\fv(x/v) \leq [L:K]$ and $\gv_{L,v}$ is finite. If the
extension is normal with group
$G=\Aut_K(L)$, then $G$ acts transitively on the set of valuations extending $v$ to $L$, and $\ev(x/v)$ and
$\fv(x/v)$ do not depend on the choice of $x$.
\end{proposition}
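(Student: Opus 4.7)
The plan is to address the five assertions in turn using Propositions~\ref{1c143} and~\ref{1c434} together with Corollary~\ref{1c312}. For existence, apply Proposition~\ref{1c143} with $R = \mathcal{O}_v \subseteq L$ and $\pa = \m_v$: Zorn's lemma produces a maximal pair, which the proposition identifies with a valuation ring of $L$ whose maximal ideal contracts to $\m_v$. For uniqueness in the purely inseparable case, for any extension $w$ of $v$ and any $x \in L^*$ some $x^{p^n}$ lies in $K^*$, so $x \in \mathcal{O}_w \iff x^{p^n} \in \mathcal{O}_w \cap K = \mathcal{O}_v$; thus $\mathcal{O}_w$ is determined by $v$ alone.

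Now assume $L/K$ is finite. For $\ev(x/v)\fv(x/v) \leq [L:K]$ I would produce $ef$ explicit $K$-linearly independent elements. Choose $\pi_1,\ldots,\pi_e \in L^*$ with $x(\pi_i)$ representing distinct cosets of $\Delta_v$ in $\Delta_x$, and $b_1,\ldots,b_f \in \mathcal{O}_x^*$ whose residues are $\kv_v$-linearly independent in $\kv_x$. Given a putative dependence $\sum_{i,j} c_{ij}\pi_i b_j = 0$, set $d_i = \sum_j c_{ij}b_j$ and rescale so $c_{ij} \in \mathcal{O}_v$ with $\min_{i,j} v(c_{ij}) = 0$. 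For each $i$ with some $c_{ij}$ nonzero, dividing by a $c_{ij}$ of smallest valuation and reducing modulo $\m_x$ uses the $\kv_v$-independence of the $\bar b_j$ to force $x(d_i) = \min_j v(c_{ij}) \in \Delta_v$. The nonzero terms $d_i\pi_i$ then have $x$-values lying in pairwise distinct cosets of $\Delta_v$, hence pairwise distinct $x$-values, and the ultrametric inequality forbids their sum from vanishing.

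For finiteness of $\gv_{L,v}$, I would use Proposition~\ref{1c434} to identify extensions with $\MaxSpec(R)$, where $R$ is the integral closure of $\mathcal{O}_v$ in $L$. Given $n$ distinct maximal ideals $\m_1,\ldots,\m_n$, the Chinese remainder theorem supplies $e_i \in R$ with image $(\delta_{ij})_j \in \prod_j R/\m_j$. Any $K$-dependence $\sum c_i e_i = 0$, after rescaling to $c_i \in \mathcal{O}_v$ with some $c_{i_0} \in \mathcal{O}_v^*$, reduces modulo $\m_{i_0}$ to $\bar c_{i_0} = 0$ in $R/\m_{i_0}$, contradicting the embedding $\kv_v \hookrightarrow R/\m_{i_0}$. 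Hence $n \leq [L:K]$, so $\gv_{L,v}$ is finite.

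Finally, in the normal case let $G = \Aut_K(L)$ act on $R$. Each element of $L$ is algebraic over $K$, so its $G$-orbit is finite, and $G$ acts continuously on the discrete ring $R$; the fixed ring $R^G$ is the integral closure of $\mathcal{O}_v$ in $L^G = L_{K,\ins}$, which by the purely inseparable case is the unique valuation ring of $L_{K,\ins}$ above $v$, with some maximal ideal $\m'$. Maximal ideals of $R$ above $\m_v$ coincide with those above $\m'$, so Corollary~\ref{1c312} yields a transitive $G$-action on extensions. If $g(x) = x'$ in $G$, then $g$ restricts to an $\mathcal{O}_v$-isomorphism $\mathcal{O}_x \to \mathcal{O}_{x'}$ sending $\m_x$ to $\m_{x'}$, hence induces isomorphisms of residue fields over $\kv_v$ and of value groups over $\Delta_v$, showing that $\ev(x/v)$ and $\fv(x/v)$ depend only on $v$. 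The main technical obstacle is the independence argument for $\{\pi_i b_j\}$: the delicate point is verifying $x(d_i) \in \Delta_v$ rather than merely $x(d_i) \geq \min_j v(c_{ij})$, as this equality is what keeps the nonzero $x(d_i\pi_i)$ in distinct $\Delta_v$-cosets.
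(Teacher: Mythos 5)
Your proof is correct, and apart from one sub-point it follows the same skeleton as the paper: existence from Proposition~\ref{1c143}, uniqueness in the purely inseparable case by characterizing $\mathcal{O}_w$ (resp.\ $\m_w$) through $p$-power images in $K$, the bound $\ev(x/v)\fv(x/v)\leq[L:K]$ by showing the products $\pi_i b_j$ are $K$-linearly independent (the paper only says ``one easily sees''; your reduction to a unit coefficient, forcing $x(d_i)=\min_j v(c_{ij})\in\Delta_v$ and then invoking distinct $\Delta_v$-cosets, is exactly the intended argument), and transitivity from Corollary~\ref{1c312} via Proposition~\ref{1c434}. Two genuine differences are worth noting. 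First, for finiteness of $\gv_{L,v}$ the paper reduces to a finite normal closure $M$: transitivity of the finite group $\Aut_K(M)$ bounds $\gv_{M,v}$, and Proposition~\ref{1c143} makes restriction from extensions on $M$ to extensions on $L$ surjective; you instead argue directly with CRT elements $e_i\equiv\delta_{ij}\bmod\m_j$ in the integral closure $R$, which is more self-contained and even yields the explicit bound $\gv_{L,v}\leq[L:K]$. Second, in the normal case you are more careful than the paper's terse citation: since $R^G$ is the integral closure of $\mathcal{O}_v$ in $L_{K,\ins}$ rather than $\mathcal{O}_v$ itself, Corollary~\ref{1c312} must be applied to the prime $\m'$ of $R^G$, and your identification of $R^G$ with the unique valuation ring of $L_{K,\ins}$ above $v$ (so that primes of $R$ over $\m_v$ and over $\m'$ coincide, all of them maximal by integrality) is precisely the detail needed to make that citation work in the non-separable normal case.
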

\begin{proof}
The fact that $\gv_{L,v} \geq 1$ follows from Proposition \ref{1c143}.

Assume that $L/K$ is purely inseparable. Let $x$ be an extension of $v$ to $L$. Then one directly sees $\m_x=\{r \in L: \exists i: r^{\pv_v^i} \in
\m_v\}$. A valuation is determined by its maximal ideal. 

Assume that $L/K$ is finite. Take a preimage $S \subseteq L$ of a basis of $\kv_x/\kv_v$ and take $T \subseteq L^*$ elements which map bijectively to
$\Delta_x/\Delta_v$. The one easily sees that $ST$ of cardinality $\ev(x/v)\fv(x/v)$ is linearly independent over $K$ and $\ev(x/v)\fv(x/v) \leq
[L:K]$ follows.

Assume that $L/K$ is normal. The transitivity follows from Corollary \ref{1c312} and Proposition \ref{1c434}, and the statements about $\ev(x/v)$ and
$\fv(x/v)$ are obvious. In particular, if $L/K$ is finite normal, the quantity $\gv_{L,v}$ is finite. It follows from Proposition \ref{1c143} that
$\gv_{L,v}$ is finite when $L/K$ is finite.
\end{proof}

\begin{lemma} \label{1c70}
 Let $(K,v)$ be a valued field and let $L$ be a finite normal extension of $K$ of degree $n$. Assume that $x$ is the unique
extension of $v$ to $L$. Then for all $a \in L$ one has $x(a)=\frac{1}{n}v(N_{L/K}(a))$. Furthermore, we have $n \Delta_x \subseteq \Delta_v$.
\end{lemma}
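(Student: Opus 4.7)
The plan is to exploit the uniqueness assumption together with the transitivity of the Galois action on extensions supplied by Proposition \ref{1c55}. Since $L/K$ is normal, $G = \Aut_K(L)$ acts transitively on the set of extensions of $v$ to $L$; but by hypothesis that set has only one element, namely $x$. Hence every $g \in G$ stabilizes $x$, which means $x(g(a)) = x(a)$ for all $a \in L^*$ and all $g \in G$.

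Next I recall the standard expression of the norm in a normal extension. If $L/K$ is finite normal, then the $K$-embeddings $L \to \overline{K}$ are exactly the elements of $G$, so $|G| = [L:K]_{\sv}$, and the determinantal norm equals
\[
N_{L/K}(a) = \left(\prod_{g \in G} g(a)\right)^{[L:K]_{\iv}}
\]
for all $a \in L$. Applying $v$ to both sides, using that $v$ restricted from the normalization agrees with $x$ on $K^*$ and that $x \circ g = x$ for each $g \in G$, I obtain
\[
v(N_{L/K}(a)) = [L:K]_{\iv} \sum_{g \in G} x(g(a)) = [L:K]_{\iv}\cdot[L:K]_{\sv}\cdot x(a) = n\, x(a).
\]
This is the asserted equality $n\,x(a) = v(N_{L/K}(a))$, which I read as $x(a) = \tfrac{1}{n} v(N_{L/K}(a))$ inside $\Delta_x \otimes \Q$ (well-defined because ordered abelian groups are torsion-free by Lemma \ref{1c60}).

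The second statement, $n\Delta_x \subseteq \Delta_v$, is immediate: as $a$ runs over $L^*$, the element $n\,x(a) = v(N_{L/K}(a))$ lies in $\Delta_v$ because $N_{L/K}(a) \in K^*$.

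I expect no real obstacle here: the only slightly delicate point is the norm formula in the possibly inseparable case, but this is a standard identity. The conceptual content of the lemma lies entirely in the short first step — translating ``unique extension'' into ``$x$ is $G$-invariant'' by way of Proposition \ref{1c55}.
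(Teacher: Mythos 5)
Your argument is correct and is essentially the paper's own proof: uniqueness plus transitivity (Proposition \ref{1c55}) gives $x\circ g = x$ for all $g\in\Aut_K(L)$, the norm identity $N_{L/K}(a)=\bigl(\prod_{g}g(a)\bigr)^{[L:K]_{\iv}}$ then yields $n\,x(a)=v(N_{L/K}(a))$, with Lemma \ref{1c60} justifying the division by $n$, and the inclusion $n\Delta_x\subseteq\Delta_v$ follows at once. No differences worth noting.
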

\begin{proof}
Let $G=\Aut(L/K)$. Then it is well-known that for $a \in L$ one has $N_{L/K}(a) = \left(\prod_{g \in G} g(a)\right)^{[L:K]_{\iv}}$. 
We have, keeping in mind Lemma \ref{1c60},
\begin{align*}
 x(a)= \frac{[L:K]_{\iv}}{n} \sum_{g \in G}  x(g(a))  = \frac{1}{n} x(N_{L/K}(a))=\frac{1}{n} v(N_{L/K}(a)).
\end{align*}
The last result follows directly. 
\end{proof}

\begin{lemma} \label{1c72}
Let $(L,w) \supseteq (K,v)$ be a finite purely inseparable extension of valued fields. Then $\kv_{w}/\kv_v$ is purely inseparable and we have
$\ev(w/v)=\ev_{\wv}(w/v)$.
\end{lemma}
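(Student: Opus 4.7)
The plan is to handle the trivial case first and then reduce to the situation where $\chart(K)=p>0$ and $[L:K]=p^m$ for some $m\ge 1$. If $\chart(K)=0$, purely inseparable finite extensions are trivial, so $L=K$ and both conclusions hold vacuously. Otherwise $\chart(K)=p$, which forces $\chart(\kv_v)=p$ (hence $\pv_v=p$), and every element $a\in L$ satisfies $a^{p^m}\in K$.

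For purely inseparability of $\kv_w/\kv_v$, I would start with an arbitrary $\overline{a}\in\kv_w$, lift it to $a\in\mathcal{O}_w$, and use $a^{p^m}\in K$ together with $\mathcal{O}_w\cap K=\mathcal{O}_v$ to deduce $a^{p^m}\in\mathcal{O}_v$. Reducing modulo $\m_w$ gives $\overline{a}^{\,p^m}\in\kv_v$, which is exactly the statement that $\kv_w/\kv_v$ is purely inseparable.

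For the equality $\ev(w/v)=\ev_{\wv}(w/v)$, the key is to show that $\ev(w/v)$ is a power of $p$, because the definition of $\ev_{\tv}(w/v)$ then forces it to equal $1$. First, Proposition \ref{1c55} yields $\ev(w/v)\fv(w/v)\le[L:K]=p^m$, so in particular $\ev(w/v)$ is finite. Next, I would take any $\delta\in\Delta_w$ represented by $a\in L^*$; since $a^{p^m}\in K^*$, the class $p^m\delta$ lies in $\Delta_v$, so $\Delta_w/\Delta_v$ is a finite abelian group of exponent dividing $p^m$. A finite abelian group whose exponent is a power of $p$ has order a power of $p$, so $\ev(w/v)=(\Delta_w:\Delta_v)$ is a power of $p=\pv_v$. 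Consequently $\ev_{\tv}(w/v)=\lcm\{m\in\Z_{\ge 1}:m\mid\ev(w/v),\gcd(m,\pv_v)=1\}=1$, and hence $\ev_{\wv}(w/v)=\ev(w/v)/\ev_{\tv}(w/v)=\ev(w/v)$.

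I do not expect a real obstacle: the whole argument rests on the single observation that $a^{p^m}\in K$ for $a\in L$, together with the easy structural fact about finite abelian $p$-exponent groups. The only thing that might look delicate is confirming that $\Delta_w/\Delta_v$ is actually finite rather than merely of bounded exponent, and this is handled by the fundamental inequality $\ev(w/v)\fv(w/v)\le[L:K]$ already recorded in Proposition \ref{1c55}.
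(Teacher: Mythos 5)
Your proof is correct. The structure is the same as the paper's: show that $\Delta_w/\Delta_v$ is a finite group annihilated by a power of $p=\pv_v$, hence a $p$-group, so $\ev_{\tv}(w/v)=1$. The difference is in how you get the annihilation. The paper cites Lemma \ref{1c70}, which uses the norm and the uniqueness of the extension (Proposition \ref{1c55}) to give $[L:K]\Delta_w\subseteq\Delta_v$, and this is stated for general finite normal extensions admitting a unique extension of the valuation. You instead observe directly that every $a\in L^*$ satisfies $a^{p^m}\in K^*$ when $[L:K]=p^m$, giving $p^m\Delta_w\subseteq\Delta_v$ immediately, without any appeal to norms. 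Your route is a bit more elementary and self-contained for this special case, and you also spell out the residue-field part that the paper dismisses as "obvious"; the paper's route is shorter because it recycles a lemma already proved for a broader purpose. Both are fine; the numerical conclusions agree.
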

\begin{proof}
It is obvious that $\kv_w/\kv_v$ is purely inseparable. Proposition \ref{1c55} together with Lemma \ref{1c70} imply $\ev(w/v)=\ev_{\wv}(w/v)$.
\end{proof}

\section{Normal extensions} 

We will first consider finite extensions of valued fields, and then take a limit.

\subsection{Finite normal extensions}
In this subsection we let $(M,x) \supseteq (K,v)$ be a finite normal extension of valued fields with $G=\Aut_K(M)$. For simplicity, we put
$x_{\iv}=x|_{K_{\iv,x}}$, $x_{\hv}=x|_{K_{\hv,x}}$, $x_{\vv}=x|_{K_{\vv,x}}$ and $x_{\sv}=x|_{M_{K,\sep}}$.

\begin{proposition} \label{1c56}
One has $[K_{\hv,x}:K]=\gv_{M,v}$. Furthermore $x$ is the unique extension of $x_{\hv}$ to $M$ and one has
$\ev(x_{\hv}/v)=\fv(x_{\hv}/v)=1$. 
\end{proposition}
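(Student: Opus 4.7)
The proof splits into three stages: (i) the degree formula $[K_{\hv,x}:K] = \gv_{M,v}$, (ii) uniqueness of the extension of $x_{\hv}$ to $M$, and (iii) the equalities $\ev(x_{\hv}/v) = \fv(x_{\hv}/v) = 1$. The first two are pure Galois theory; (iii) is the main content and requires a norm/trace computation combined with weak approximation.

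For (i) and (ii), pass to the Galois closure $M_{K,\sep}/K$. Since $M/M_{K,\sep}$ is purely inseparable (Proposition \ref{1c777}), restriction yields an isomorphism $G = \Aut_K(M) \to \Aut_K(M_{K,\sep})$ and, by Proposition \ref{1c55}, a bijection between extensions of $v$ to $M$ and to $M_{K,\sep}$. Proposition \ref{1c55} also gives the transitive $G$-action with stabilizer $\Dv_{x,K}$, so $\gv_{M,v} = [G:\Dv_{x,K}]$; the Galois correspondence for $M_{K,\sep}/K$ then identifies this with $[M_{K,\sep}^{\Dv_{x,K}}:K] = [K_{\hv,x}:K]$, giving (i). For (ii), apply Proposition \ref{1c55} to the normal extension $M/K_{\hv,x}$: $\Aut_{K_{\hv,x}}(M) = \Dv_{x,K}$ acts transitively on extensions of $x_{\hv}$ to $M$ but fixes $x$ by definition of $\Dv_{x,K}$, so $x$ is the unique extension, and in particular $\gv_{M,x_{\hv}} = 1$.

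For (iii), let $v_1 = x_{\hv}, v_2, \ldots, v_r$ be the distinct extensions of $v$ to $K_{\hv,x}$ and write the $K$-embeddings $K_{\hv,x} \hookrightarrow M_{K,\sep}$ as $\sigma_j = g_j|_{K_{\hv,x}}$, where $g_j$ ranges over left coset representatives of $\Dv_{x,K}$ in $G$. The valuations $g_j^{-1}x$ enumerate all extensions of $v$ to $M_{K,\sep}$, and upon restriction to $K_{\hv,x}$ each $v_k$ is hit with multiplicity $\gv_{M,v_k}$; this yields, for $c \in K_{\hv,x}^*$,
\[
v\bigl(N_{K_{\hv,x}/K}(c)\bigr) \,=\, \sum_{j} x(g_j(c)) \,=\, \sum_{k=1}^{r} \gv_{M,v_k}\, v_k(c).
\]
The crucial input from (ii) is $\gv_{M,v_1} = 1$, so the coefficient of $v_1(c)$ is exactly one.

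For any $a \in K_{\hv,x}^*$, after replacing $a$ by $a^{-1}$ if necessary one may assume $v_1(a) \geq 0$. Corollary \ref{1c63} provides $c \in K_{\hv,x}$ with $c \equiv a \pmod{\m_{v_1}^{v_1(a)+1}}$ and $c \equiv 1 \pmod{\m_{v_k}}$ for $k \geq 2$; then $v_1(c) = v_1(a)$ and $v_k(c) = 0$ otherwise, so the norm formula collapses to $v_1(a) = v(N_{K_{\hv,x}/K}(c)) \in \Delta_v$, proving $\ev(x_{\hv}/v) = 1$. The analogous trace argument takes $a \in \mathcal{O}_{v_1}$ and chooses $c$ with $c \equiv a \pmod{\m_{v_1}}$ and $c \equiv 0 \pmod{\m_{v_k}}$ for $k \geq 2$: then $\tr_{K_{\hv,x}/K}(c) \in K \cap \mathcal{O}_x = \mathcal{O}_v$, and for $j$ with $g_j \notin \Dv_{x,K}$ the restriction $g_j^{-1}x|_{K_{\hv,x}}$ cannot equal $v_1$ (otherwise (ii) would force $g_j \in \Dv_{x,K}$), so the corresponding $v_k$ has $k \geq 2$ and $g_j(c) \in \m_x$; only the identity coset representative contributes modulo $\m_x$, giving $\overline{\tr(c)} \equiv \bar a$ in $\kv_x$ and hence $\bar a \in \kv_v$, so $\fv(x_{\hv}/v) = 1$. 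The main obstacle is stage (iii): once the multiplicity-one norm/trace formula is in place, the observation $\gv_{M,v_1} = 1$ from (ii) is precisely what allows weak approximation to extract $v_1(a)$ and $\bar a$ as images of elements of $K$.
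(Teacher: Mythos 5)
Your stages (i) and (ii), and the residue-field half of (iii), are correct and essentially the paper's own argument: the degree formula and uniqueness come from the transitivity in Proposition \ref{1c55} plus the Galois correspondence, and your trace computation with an element $c \equiv a \pmod{\m_{v_1}}$, $c \equiv 0 \pmod{\m_{v_k}}$ ($k \geq 2$) is exactly the paper's proof that $\fv(x_{\hv}/v)=1$; that step only needs Corollary \ref{1c63} with $r_i=1$, so it is sound.

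The ramification-index half of (iii) has a genuine gap. You need an element $c \in K_{\hv,x}^*$ with the \emph{exact} values $v_1(c)=v_1(a)$ and $v_k(c)=0$ for $k \geq 2$, and you invoke Corollary \ref{1c63} with the modulus ``$\m_{v_1}^{v_1(a)+1}$''. First, this is not what the corollary provides: it only gives congruences modulo integer powers $\m_{w_i}^{r_i}$, and $v_1(a)+1$ is an element of the value group, not an integer; for a dense value group one has $\m_{v_1}^{r}=\m_{v_1}$ for every $r$, so membership in $\m_{v_1}^{r}$ gives no lower bound on $v_1(c-a)$ in terms of $v_1(a)$, and $v_1(c)=v_1(a)$ cannot be forced this way. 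Second, and more seriously, such a $c$ need not exist at all: distinct extensions of $v$ to $K_{\hv,x}$ can share a nontrivial common coarsening, and then exact value prescriptions must agree on it. For instance, let $K=\Q(t)$ with $v$ the rank-two valuation obtained by composing the $t$-adic valuation with the $5$-adic valuation on the residue field $\Q$, and let $M=K_{\hv,x}=\Q(i)(t)$; then $v$ has two extensions $v_1,v_2$, both refining the unique extension $u$ of the $t$-adic coarsening, and for $a=t$ the conditions $v_1(c)=v_1(t)$ and $v_2(c)=0$ force contradictory values of $u(c)$. So the multiplicity-one norm formula cannot be fed the element your argument requires, and the approach does not merely lack a reference — it fails in rank $>1$. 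The paper avoids this by a different device: it multiplies $b$ by a power $\alpha_1^m$ of the weak-approximation element so that $x_{\hv}(\beta)$ differs from the values of all its conjugates, and then reads off $x_{\hv}(b)$ as the value of a ratio $a_{n-r-1}/a_{n-r}$ of two consecutive coefficients of $\prod_{\overline{g}}(X-\overline{g}(\beta)) \in K[X]$ (a Newton-polygon style argument), which only uses the approximation statement actually proved in Corollary \ref{1c63}. Your norm argument does go through verbatim when $\Delta_v \cong \Z$, but not in the generality of the proposition.
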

\begin{proof}
Since the action of $G$ on the set of valuations of $M$ extending $v$ is transitive (Proposition \ref{1c55}), we have $[K_{\hv,x}:K]=\gv_{M,v}$. The second statement
also follows from the transitivity of the action. 

We will show $\fv(x_{\hv}/v)=1$. Let $a \in \mathcal{O}_{x_{\hv}}$, and pick $\alpha_a \in K_{\hv,x}$ satisfying $\alpha_a-a \in
\m_{x_{\hv}}$
and
$\alpha_a$ 
in the maximal ideal of any other valuation extending $v$ to $K_{\hv,x}$ (Corollary \ref{1c63}). This means that for $\overline{g} \in G/\Dv_{x,K}$ with $g \neq \Dv_{x,K}$ we have $g(\alpha) \in \m_{x_{\hv}}$. Then, by looking in $M$, one obtains
\begin{align*}
\tr_{K_{\hv,x}/K}(\alpha_a)=\sum_{\overline{g} \in G/\Dv_{x,K}} g(\alpha) \in a+\m_{x_{\hv}}.
\end{align*}
Notice that $\tr_{K_{\hv,x}/K}(\alpha_a) \in K \cap \mathcal{O}_{x_{\hv}}=\mathcal{O}_v$. Hence the natural map $\kv_v \to \kv_{x_{\hv}}$ is surjective. This gives $\fv(x_{\hv}/v)=1$.

Next we will prove $\ev(x_{\hv}/v)=1$. Let $b \in K_{\hv,x}^*$. Take $m \in \Z$ such that for all $g \in G \setminus \Dv_{x,K}$, we have
$x_{\hv}(\alpha_1^m b) \neq x_{\hv}(g(\alpha_1^mb))$. To do this, one needs to make sure that for all $g \in G \setminus \Dv_{x,K}$ one has
$m(x(\alpha_1)-x(g(\alpha_1))) \neq x(g(b))-x(b)$, which can easily be achieved since $x(\alpha_1) \neq
x(g(\alpha_1))$, the group $\Delta_x$ is torsion-free and $G$ is finite. 
Put $\beta=\alpha_1^m b$ and $f=\prod_{\overline{g} \in G/\Dv_{x,K}} (X-\overline{g}(\beta))=\sum_{i=0}^n a_i X^i \in K[X]$ with $a_n=1$. Let $S=\{\overline{g}(\beta): \overline{g} \in G/\Dv_{x,K} \textrm{ s.t. } 
x_{\hv}(\overline{g}(\beta))<x_{\hv}(\beta)\}$ and set $r=\#S$. Then one sees $x_{\hv}(a_{n-r})=x_{\hv}(\prod_{c \in S} c)$
and
$x_{\hv}(a_{n-r-1})=x_{\hv}(\beta \prod_{c \in S} c)$. This gives $x_{\hv}(b)=x_{\hv}(\beta)=x_{\hv}(a_{n-r-1}/a_{n-r}) \in \Delta_v$ and we are done.
\end{proof}

\begin{proposition} \label{1c57}
We have a short exact sequence 
\begin{align*}
0 \to \Iv_{x,K} \to \Dv_{x,K} \to  \Aut_{\kv_v}(\kv_x) \to 0
\end{align*}
and $\kv_x$ is normal over $\kv_v$. Furthermore
we have $[K_{\iv,x}:K_{\hv,x}]=\fv(x_{\iv}/x_{\hv})=\fv_{\sv}(x_{\iv}/x_{\hv})=\fv_{\sv}(x/v)$ and $\ev(x_{\iv}/x_{\hv})=1$.
\end{proposition}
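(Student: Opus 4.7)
The plan is to handle the three assertions in order: first the short exact sequence together with normality of $\kv_x/\kv_v$, then the degree equalities. Since the kernel $\Iv_{x,K}$ is built into the definition of the inertia group, the exact sequence reduces to showing surjectivity of $\Dv_{x,K}\to\Aut_{\kv_v}(\kv_x)$; I would extract both this surjectivity and the normality of $\kv_x/\kv_v$ simultaneously from Corollary \ref{1c313}, applied to the $G$-action on the ring $R$ equal to the integral closure of $\mathcal{O}_v$ in $M$.

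Concretely, set $\qa=\m_x\cap R$, so that $R/\qa=\kv_x$ (because $R_\qa=\mathcal{O}_x$ and $\qa$ is maximal). By Proposition \ref{1c434} the correspondence $w\mapsto\m_w\cap R$ is a $G$-equivariant bijection with stabilizer of $\qa$ equal to $\Dv_{x,K}$. The delicate point is identifying $k:=Q(R^G/(\qa\cap R^G))$: one has $R^G=R\cap M^G=R\cap M_{K,\ins}$, the integral closure of $\mathcal{O}_v$ in the purely inseparable extension $M_{K,\ins}/K$. By Propositions \ref{1c55} and \ref{1c434} the valuation $v$ has a unique extension $v'$ to $M_{K,\ins}$ and $R^G$ has a unique maximal ideal; being local, $R^G$ therefore equals $\mathcal{O}_{v'}$, and $k=\kv_{v'}$. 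Lemma \ref{1c72} now gives that $\kv_{v'}/\kv_v$ is purely inseparable, so every $\kv_v$-automorphism of $\kv_x$ fixes $\kv_{v'}$ automatically. Corollary \ref{1c313} then yields a surjection $\Dv_{x,K}\twoheadrightarrow\Aut_{\kv_{v'}}(\kv_x)=\Aut_{\kv_v}(\kv_x)$ with $\kv_x/\kv_{v'}$ (hence $\kv_x/\kv_v$) normal, completing the exact sequence.

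With the exact sequence in hand, Galois theory inside $M_{K,\sep}/K$ (Galois group $G$) gives
\[
[K_{\iv,x}:K_{\hv,x}]=[\Dv_{x,K}:\Iv_{x,K}]=\#\Aut_{\kv_v}(\kv_x)=[\kv_x:\kv_v]_{\sv}=\fv_{\sv}(x/v),
\]
using normality of $\kv_x/\kv_v$ in the third equality. To control $\kv_{x_{\iv}}$ I would re-run the exact sequence on the finite normal subextension $(M,x)/(K_{\iv,x},x_{\iv})$: here $\Aut_{K_{\iv,x}}(M)=\Iv_{x,K}$, so $\Dv_{x,K_{\iv,x}}=\Dv_{x,K}\cap\Iv_{x,K}=\Iv_{x,K}$, but by the very definition of the inertia group $\Iv_{x,K}$ acts trivially on $\kv_x$, forcing the surjective map $\Iv_{x,K}\twoheadrightarrow\Aut_{\kv_{x_{\iv}}}(\kv_x)$ to be zero. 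Hence $\Aut_{\kv_{x_{\iv}}}(\kv_x)=\{1\}$, and since $\kv_x/\kv_{x_{\iv}}$ is normal this forces it to be purely inseparable, i.e.\ $\fv_{\sv}(x/x_{\iv})=1$; by multiplicativity together with $\fv_{\sv}(x_{\hv}/v)\leq\fv(x_{\hv}/v)=1$ (Proposition \ref{1c56}), we conclude $\fv_{\sv}(x_{\iv}/x_{\hv})=\fv_{\sv}(x/v)$.

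The remaining identities come from the inequality $\ev\fv\leq[L:K]$ of Proposition \ref{1c55} applied to $(K_{\iv,x},x_{\iv})/(K_{\hv,x},x_{\hv})$, which has $\gv=1$ because the uniqueness of Proposition \ref{1c56} for $x_{\hv}$ restricts to $K_{\iv,x}$. Since $\fv(x_{\hv}/v)=1$, one has $\fv(x_{\iv}/x_{\hv})=[\kv_{x_{\iv}}:\kv_v]\geq[\kv_{x_{\iv}}:\kv_v]_{\sv}=\fv_{\sv}(x/v)$ on one side and $\ev(x_{\iv}/x_{\hv})\fv(x_{\iv}/x_{\hv})\leq[K_{\iv,x}:K_{\hv,x}]=\fv_{\sv}(x/v)$ on the other, so the squeeze forces $\ev(x_{\iv}/x_{\hv})=1$, $\fv(x_{\iv}/x_{\hv})=\fv_{\sv}(x/v)$, and separability of $\kv_{x_{\iv}}/\kv_v$, equivalently $\fv_{\sv}(x_{\iv}/x_{\hv})=\fv(x_{\iv}/x_{\hv})$. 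The genuinely hard step is the Tate-type surjectivity combined with the identification $R^G=\mathcal{O}_{v'}$; the rest is Galois and multiplicativity bookkeeping.
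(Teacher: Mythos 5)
Your proposal is correct and takes essentially the same route as the paper: Corollary \ref{1c313} together with Proposition \ref{1c434} for the surjectivity of $\Dv_{x,K}\to\Aut_{\kv_v}(\kv_x)$ and the normality of $\kv_x/\kv_v$, then re-running the sequence for $M/K_{\iv,x}$ to see that $\kv_x/\kv_{x_\iv}$ is purely inseparable, and finally a squeeze using Propositions \ref{1c55} and \ref{1c56} to pin down $[K_{\iv,x}:K_{\hv,x}]$, $\fv$, $\fv_{\sv}$ and $\ev(x_\iv/x_\hv)=1$. The only difference is that you spell out details the paper leaves implicit, namely the identification of the fixed ring $R^{G}$ with the valuation ring of the unique extension to $M_{K,\ins}$ and the use of Lemma \ref{1c72} to replace $\Aut_{\kv_{v'}}(\kv_x)$ by $\Aut_{\kv_v}(\kv_x)$, which is a welcome clarification rather than a new method.
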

\begin{proof}
The exactness of the sequence and the normality of $\kv_x$ over $\kv_v$ follow from Corollary \ref{1c313} and Proposition \ref{1c434}.
We will now prove the last two statements. Look at the normal
extension $M/K_{\iv,x}$ with group $\Iv_{x,K}$. From the exact sequence for the extension $M/K_{\iv,x}$ just obtained we see that the zero map $\Iv_{x,K} \to \Aut_{\kv_{x_{\iv}}}(\kv_x)$ is surjective. We find $\Aut_{\kv_{x_{\iv}}}(\kv_x)=0$. As $\kv_x/\kv_{x_{\iv}}$ is normal, this gives that $\kv_x/\kv_{x_{\iv}}$ is purely
inseparable.
Consider the Galois extension $K_{\iv,x}/K_{\hv,x}$ with group $\Dv_{x,K}/\Iv_{x,K}$. We obtain an exact sequence 
\begin{align*}
\Dv_{x,K}/\Iv_{x,K}	 \to
\Aut_{\kv_{x_{\hv}}}(\kv_{x_{\iv}})=\Aut_{\kv_v}(\kv_x) \to 0 
\end{align*}
(note that $\kv_v=\kv_{x_{\hv}}$ by Proposition \ref{1c56}). The first map
is injective and hence we have an isomorphism. Using Proposition \ref{1c55} we obtain:
\begin{align*}
\ev(x_{\iv}/x_{\hv}) \cdot \fv(x_{\iv}/x_{\hv}) &\leq [K_{\iv,x}:K_{\hv,x}]=\# \Aut_{\kv_{x_{\hv}}}(\kv_{x_{\iv}})\\
&= \# \Aut_{\kv_{v}}(\kv_{x}) \leq [\kv_{x_{\iv}}:\kv_{x_{\hv}}] =
\fv(x_{\iv}/x_{\hv}).
\end{align*}
Hence we find $[K_{\iv,x}:K_{\hv,x}]=\fv(x_{\iv}/x_{\hv})=\fv_{\sv}(x_{\iv}/x_{\hv})=\fv_{\sv}(x/v)$ and $\ev(x_{\iv}/x_{\hv})=1$.
\end{proof}

\begin{lemma} \label{1crene}
 Let $0 \to A \to B \overset{f}{\to} C \to 0$ be an exact sequence of abelian groups. Let $H$ be the group of automorphisms of this sequence consisting of automorphisms which are the identity on $C$. Let $H' \subseteq H \subseteq \Aut(B)$ the set of automorphisms which are the identity on $A$. Then the map
 \begin{align*}
\varphi \colon H &\to \Hom(C,A) \rtimes \Aut(A) \\
h &\mapsto (f(b) \mapsto h(b)-b, h|_A) 
 \end{align*}
 is an injective morphism of groups. One has:
 \begin{enumerate}
 \item $\varphi|_{H'} \colon H' \to \Hom(C,A)$ is an isomorphism;
 \item if  $0 \to A \to B \to C \to 0$ is split, then $\varphi$ is an isomorphism. 
 \end{enumerate}
\end{lemma}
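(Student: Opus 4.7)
The plan is to decompose $h \in H$ into the pair $(\chi_h, h|_A)$, where $h|_A \in \Aut(A)$ and $\chi_h \colon C \to A$ captures the ``twist'' of $h$ away from $A$, and to identify this decomposition with the semidirect product. The basic observation is that, since $h$ induces the identity on $C = B/A$, we have $f(h(b)) = f(b)$ and thus $h(b) - b \in \ker(f) = A$ for every $b \in B$; this also forces $h$ to preserve $A$, so $h|_A \in \Aut(A)$ is well-defined. This already gives the second coordinate of $\varphi$.

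For (i), take $h \in H'$, so $h|_A = \id_A$. The rule $\chi_h(f(b)) := h(b) - b$ is well-defined: if $f(b) = f(b')$ then $b - b' \in A$ is fixed by $h$, so $h(b) - b = h(b') - b'$. Additivity of $\chi_h$ is immediate, and $h \mapsto \chi_h$ is a group homomorphism via the calculation $\chi_{h_1 h_2}(f(b)) = h_1(h_2(b)) - b = (h_2(b) - b) + (h_1(b) - b)$, using $h_1|_A = \id$ on the element $h_2(b) - b \in A$. Injectivity is immediate from $\chi_h = 0 \implies h = \id$. Surjectivity is by explicit construction: given $\chi \in \Hom(C, A)$, the formula $h_\chi(b) := b + \chi(f(b))$ defines an element of $H'$ with inverse $b \mapsto b - \chi(f(b))$ and with $\chi_{h_\chi} = \chi$.

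For (ii), fix a group-theoretic section $s \colon C \to B$ of $f$, so $B = A \oplus s(C)$ with unique decomposition $b = a + s(c)$. For any $h \in H$, set $\chi_h(c) := h(s(c)) - s(c) \in A$; because $s$ is a homomorphism, $\chi_h$ is one as well. Define $\varphi(h) := (\chi_h, h|_A)$. The main calculation is the identity
\[
h_1 h_2 (s(c)) = h_1\bigl(\chi_{h_2}(c) + s(c)\bigr) = (h_1|_A)(\chi_{h_2}(c)) + \chi_{h_1}(c) + s(c),
\]
which gives $\chi_{h_1 h_2} = \chi_{h_1} + (h_1|_A) \circ \chi_{h_2}$, matching the semidirect product law $(\chi_1, \alpha_1)(\chi_2, \alpha_2) = (\chi_1 + \alpha_1 \circ \chi_2,\, \alpha_1 \alpha_2)$; so $\varphi$ is a group homomorphism. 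Injectivity: $\varphi(h) = (0, \id)$ forces $h$ to fix $A$ and $s(C)$ pointwise, hence all of $B$. Surjectivity: given $(\chi, \alpha)$, the formula $h(a + s(c)) := \alpha(a) + \chi(c) + s(c)$ defines an element of $H$ mapping to $(\chi, \alpha)$.

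The main subtlety, which also affects the unconditional injectivity claim for $\varphi$ on all of $H$, is that the formula ``$f(b) \mapsto h(b) - b$'' depends on the choice of lift $b$ when $h \notin H'$: for two lifts, the discrepancy is $(h|_A - \id)(b - b')$, which is generally nonzero. One resolves this by choosing a (set-theoretic) section $s$ and setting $\chi_h(c) = h(s(c)) - s(c)$, which determines $h$ together with $h|_A$; the injectivity argument from (ii) then goes through, although without splitting $\chi_h$ may only lie in $\Map(C, A)$ rather than $\Hom(C, A)$.
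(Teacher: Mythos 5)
Your proof is correct and follows essentially the same route as the paper: for (i) you invert $\varphi|_{H'}$ by the explicit map $\chi \mapsto (b \mapsto b + \chi(f(b)))$, which is exactly the paper's $\psi$, and for (ii) you fix a splitting and check the semidirect-product law, which is what the paper does implicitly by replacing the sequence with $0 \to A \to A \oplus C \to C \to 0$ and writing down the inverse $(\sigma,\tau) \mapsto \bigl((a,c) \mapsto (\sigma(c)+\tau(a),c)\bigr)$. The one real difference is your closing observation, and it is a point in your favour: the formula $f(b) \mapsto h(b)-b$ is independent of the lift $b$ only when $h|_A = \id$ (two lifts differ by some $a \in A$ and the two values by $h(a)-a$), so the first coordinate of $\varphi$ is well defined on $H'$ but not on all of $H$; the paper's proof passes over this with ``one easily shows that $\varphi$ is well-defined,'' which is accurate only after restricting to $H'$ or, as in (ii), after fixing a splitting $s$ and reading the formula with the lift $b=s(c)$. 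Your repair --- choose a set-theoretic section to get an injection $h \mapsto (h\circ s - s,\, h|_A)$ into $\Map(C,A)\times\Aut(A)$, which lands in $\Hom(C,A)\rtimes\Aut(A)$ exactly in the situations (i) and (ii) in which the lemma is actually applied later in the paper --- is the right way to read the statement.
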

\begin{proof}
One easily shows that $\varphi$ is well-defined and that it is a morphism of groups.

i: Consider the following map:
\begin{align*}
\psi \colon \Hom(C,A) &\to H' \\
\sigma &\mapsto (b \in B \mapsto b+\sigma(f(b)))
\end{align*}
One then easily checks that this is the inverse of $\varphi|_{H'}$. This also shows that $\varphi$ is injective.

ii: Consider the exact sequence $0 \to A \to A \oplus C \to C \to 0$. Consider the map
\begin{align*}
\chi \colon \Hom(C,A) \rtimes \Aut(A) &\to H \\
(\sigma,\tau) &\mapsto \left((a,c) \mapsto (\sigma(c)+\tau(a),c) \right).
\end{align*}
One easily checks that both maps are inverse to each other.
\end{proof}

We have an exact sequence $1 \to \mathcal{O}_x^*/(1+\m_x)\cong \kv_x^* \to M^*/(1+\m_x) \to \Delta_x \to 1$. 
Note that $\Dv_{x,K}$ acts on such sequences and it acts on this sequence trivially on $\Delta_x$ (Lemma \ref{1c466}), it fixes $K^*/(1+\m_v)$ and the action on $\kv_x^*$ comes from a field automorphism. Set $\Gamma_{x,v}=\im\left(\Aut_{\kv_v}(\kv_x) \to \Aut_{\kv_v^*}(\kv_x^*) \right)$. Let $\Aut_{K^*,\Gamma_{x,v}}(M^*/(1+\m_x))$ be the group of automorphisms of the sequence, seen as subgroup of $\Aut(M^*/(1+\m_x))$, which are the identity on $K^*/(1+\m_x)$ and which induce an element of $\Gamma_{x,v}$ on $\kv_x^*$ (note that the two conditions already imply that they act as the identity on $\Delta_x$). 
We get a morphism $\Dv_{x,K} \to \Aut_{K^*,\Gamma_{x,v}}(M^*/(1+\m_x))$.

Note that the group $\Iv_{x,K}$ acts trivially on $\kv_x^*$ and $K^*/(1+\m_v)$ and $\Delta_x$. The automorphisms of the exact sequence with these properties correspond by Lemma \ref{1crene} to $\Hom(\Delta_x/\Delta_v,\kv_x^*)$ and this gives a morphism \begin{align*}
\Iv_{x,K} &\to \Hom(\Delta_x/\Delta_v,\kv_x^*) \\
g &\mapsto \left(\overline{c} \mapsto
\overline{g(c)/c} \right).
\end{align*}
By definition $\Vv_{x,K}$ is the kernel of the last morphism.

\begin{lemma} \label{1c66}   
 Let $(L',u') \supseteq (L,u)$ be a finite normal extension of valued fields with group $H$. Assume that for all $a \in L'^*$ and $h \in H$ we have
\begin{align*}
 \frac{h(a)}{a} \in 1+\m_{u'}.
\end{align*}
Then $H$ is a $\pv_{u}$-group.
\end{lemma}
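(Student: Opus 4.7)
I would argue by contradiction. Suppose $|H|$ is not a power of $\pv_u$; then some prime $\ell \neq \pv_u$ divides $|H|$. By Cauchy's theorem, pick $\sigma \in H$ of order $\ell$ and let $F = L'^{\langle \sigma \rangle}$. By standard Galois theory, $L'/F$ is a cyclic Galois extension of degree $\ell$, in particular separable, with Galois group $\langle \sigma \rangle$. The hypothesis of the lemma, applied to $\sigma$, still reads $\sigma(a)/a \in 1 + \m_{u'}$ for every $a \in L'^*$, so we may replace $L$ with $F$ and $H$ with $\langle \sigma \rangle$.

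The key computation is the trace $\tr = \tr_{L'/F} \colon L' \to F$. For $a \in L'^*$ I would factor
\[
\tr(a) \;=\; \sum_{i=0}^{\ell-1} \sigma^i(a) \;=\; a \cdot \sum_{i=0}^{\ell-1} \frac{\sigma^i(a)}{a}.
\]
Since $1 + \m_{u'}$ is a multiplicative subgroup of $\mathcal{O}_{u'}^*$, applying the hypothesis to $\sigma^j(a)$ in place of $a$ and multiplying gives $\sigma^i(a)/a \in 1 + \m_{u'}$ for every $i \geq 0$. Hence the bracketed sum reduces modulo $\m_{u'}$ to $\ell$. Because $\ell \neq \pv_u = \chart \kv_{u'}$, the integer $\ell$ is a unit in $\mathcal{O}_{u'}$, so the sum itself lies in $\mathcal{O}_{u'}^*$ and $\tr(a) = a \cdot (\text{unit})$. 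In particular $\tr(a) \neq 0$ whenever $a \neq 0$, i.e.\ $\tr$ is injective on $L'$.

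But $\tr \colon L' \to F$ is $F$-linear and, by separability of $L'/F$, surjective; thus its kernel has $F$-dimension $\ell - 1 \geq 1$ and is nonzero. This contradicts the injectivity just established, forcing no such $\ell$ to exist, so $|H|$ is a power of $\pv_u$.

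I do not see any genuine obstacle: the only insight is choosing the trace as the right $F$-linear functional with which to test the hypothesis, and then the argument collapses into a one-line computation using that $\ell$ is a unit in $\kv_{u'}$. The degenerate regimes (for example $\pv_u = 1$, where the claim becomes $H = 1$, or $u$ trivial, where the hypothesis forces $\sigma = \id$) are automatically subsumed.
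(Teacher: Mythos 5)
Your proof is correct and is essentially the same argument as the paper's: both hinge on the observation that $\tr_{L'/F}(a)/a \equiv [L':F] \pmod{\m_{u'}}$ for every $a \in L'^*$, combined with a dimension count on the trace map. The paper picks $a$ with $\tr(a)=0$ and reads off $[L':L] \equiv 0$ directly, while you run the same computation as a contradiction via Cauchy's theorem and injectivity of the trace; this is a cosmetic difference, and your version makes explicit the reduction to prime-order cyclic subgroups that the paper leaves implicit.
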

\begin{proof}
We can directly reduce to the case where $L'/L$ is Galois and $[L':L]>1$. 
Let $a \in L'^*$ satisfy $\tr_{L'/L}(a)=0$, which exists by looking at
dimensions. Then we have 
\begin{align*}
 0 = \frac{\tr_{L'/L}(a)}{a} \in [L':L]+\m_{u'}.
\end{align*}
This shows $\#H=[L':L]=0 \in \kv_{u}$ and hence $H$ is a $\pv_{u}$-group.
\end{proof}

\begin{proposition} \label{1c58}
 The subgroup $\Vv_{x,K}$ is the unique $\pv_v$-Sylow subgroup of $\Iv_{x,K}$. The sequences 
\begin{align*}
0 \to \Vv_{x,K} \to \Iv_{x,K} \to
\Hom(\Delta_x/\Delta_v,\kv_x^*) \to 0
\end{align*}
and
\begin{align*}
0 \to \Vv_{x,K} \to \Dv_{x,K} \to \Aut_{K^*,\Gamma_{x,v}}(M^*/(1+\m_x)) \to 0
\end{align*} 
are exact. One has $[K_{\vv,x}:K_{\iv,x}]=\ev(x_{\vv}/x_{\iv})=\ev_{\tv}(x_{\vv}/x_{\iv})$ and $\fv(x_{\vv}/x_{\iv})=1$. We also
have $[M:K_{\vv,x}] \in \pv_v^{\Z_{\geq 0}}$ and $\ev(x/x_{\vv})=\ev_{\wv}(x/x_{\vv})$. Set $s=\ord(\Iv_{x,K}/\Vv_{x,K})$. Then $\kv_{x}$ has an $s$-th primitive root of unity.
\end{proposition}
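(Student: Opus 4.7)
The plan is to peel off the four pieces in the natural order: first show $\Vv_{x,K}$ is a $\pv_v$-group, then establish the Hom-valued sequence, then nail the degree identities via the fundamental equality, and finally extract both the surjectivity onto $\Hom$ and the roots of unity by a single counting argument (with the second exact sequence a byproduct).

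For the first step, observe that $\Vv_{x,K}$ is by definition the subgroup of $\Iv_{x,K}$ consisting of $g$ with $g(c)/c \in 1+\m_x$ for all $c \in M^*$. Applied to the normal extension $M/K_{\vv,x}$ with group $\Vv_{x,K}$, Lemma~\ref{1c66} then immediately gives that $\Vv_{x,K}$ is a $\pv_v$-group. For the first exact sequence, $\Dv_{x,K}$ (and therefore $\Iv_{x,K}$) acts trivially on $\Delta_x$ by Lemma~\ref{1c466}, while $\Iv_{x,K}$ also acts trivially on $\kv_x^*$ by definition. Applying Lemma~\ref{1crene}(i) to $1 \to \kv_x^* \to M^*/(1+\m_x) \to \Delta_x \to 0$ identifies $\Iv_{x,K}$ with a subgroup of $\Hom(\Delta_x,\kv_x^*)$; invariance of $K^*/(1+\m_v)$ forces the image into $\Hom(\Delta_x/\Delta_v,\kv_x^*)$, and the kernel is $\Vv_{x,K}$ by definition. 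Since $\kv_x^*$ has no $\pv_v$-torsion, $|\Iv_{x,K}/\Vv_{x,K}|$ is prime to $\pv_v$, which combined with the first step shows $\Vv_{x,K}$ is the unique (normal) $\pv_v$-Sylow of $\Iv_{x,K}$.

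For the degree identities I apply Theorem~\ref{1c933} to $K_{\vv,x}/K_{\iv,x}$ and to $M/K_{\vv,x}$; both have $\gv = 1$ because in each case the decomposition group fills the whole Galois group. For $K_{\vv,x}/K_{\iv,x}$, the quotient $\Iv_{x,K}/\Vv_{x,K}$ acts trivially on $\kv_x$, so the normal extension $\kv_{x_{\vv}}/\kv_{x_{\iv}}$ has trivial automorphism group and is purely inseparable; its degree is simultaneously a $\pv_v$-power and a divisor of the prime-to-$\pv_v$ number $[K_{\vv,x}:K_{\iv,x}]$, hence $\fv(x_{\vv}/x_{\iv}) = 1$. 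The fundamental equality then reads $[K_{\vv,x}:K_{\iv,x}] = \ev \cdot \dv$ with $\dv$ a $\pv_v$-power, forcing $\dv = 1$ and $\ev(x_{\vv}/x_{\iv}) = [K_{\vv,x}:K_{\iv,x}] = \ev_{\tv}(x_{\vv}/x_{\iv})$. For $M/K_{\vv,x}$, the degree $|\Vv_{x,K}|$ is a $\pv_v$-power, so $\ev(x/x_{\vv})$ is a $\pv_v$-power and hence equals $\ev_{\wv}(x/x_{\vv})$.

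With these in hand, surjectivity of $\Iv_{x,K}/\Vv_{x,K} \to \Hom(\Delta_x/\Delta_v,\kv_x^*)$ and the roots-of-unity statement fall out of a single order comparison. By multiplicativity of $\ev$ in the tower $K \subseteq K_{\hv,x} \subseteq K_{\iv,x} \subseteq K_{\vv,x} \subseteq M$ combined with $\ev(x_{\hv}/v) = \ev(x_{\iv}/x_{\hv}) = 1$ from Propositions~\ref{1c56} and~\ref{1c57} and the $\pv_v$-split of the previous paragraph, $|\Iv_{x,K}/\Vv_{x,K}|$ equals the prime-to-$\pv_v$ part of $\ev(x/v)$, namely $\ev_{\tv}(x/v)$. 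On the other hand $|\Hom(\Delta_x/\Delta_v,\kv_x^*)| \le \ev_{\tv}(x/v)$, so both inequalities are equalities and surjectivity is forced; the order of $\Hom$ attaining its upper bound forces, by elementary divisor decomposition of $\Delta_x/\Delta_v$, that $\kv_x$ contain enough roots of unity, in particular the $s$-th ones claimed. The second exact sequence is then extracted by identifying the kernel of $\Dv_{x,K} \to \Aut(M^*/(1+\m_x))$ as exactly $\Vv_{x,K}$, and combining the first exact sequence with the surjection $\Dv_{x,K}/\Iv_{x,K} \to \Gamma_{x,v}$ obtained from Proposition~\ref{1c57}, via Lemma~\ref{1crene}(ii). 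The main obstacle is orchestrating the $\pv_v$-versus-prime-to-$\pv_v$ arguments so that the surjectivity onto $\Hom$ and the roots of unity emerge simultaneously, rather than requiring a separate explicit construction of tame generators.
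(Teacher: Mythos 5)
Your overall architecture matches the paper's, and several pieces are right: Lemma~\ref{1c66} applied to $M/K_{\vv,x}$ for the $\pv_v$-group fact, the injection of $\Iv_{x,K}/\Vv_{x,K}$ into $\Hom(\Delta_x/\Delta_v,\kv_x^*)$ by definition of $\Vv$, and the $\pv_v$-power of $\ev(x/x_{\vv})$ via Lemma~\ref{1c70}. The pure-inseparability route to $\fv(x_{\vv}/x_{\iv})=1$ is also fine. However, there is a genuine circularity in the degree-identity step. You write that ``the fundamental equality then reads $[K_{\vv,x}:K_{\iv,x}]=\ev\cdot\dv$ with $\dv$ a $\pv_v$-power, forcing $\dv=1$.'' The fact that the defect lies in $\pv_v^{\Z_{\geq0}}$ is precisely Proposition~\ref{1c4444}(ii), whose proof rests on Lemma~\ref{1c90}, and Lemma~\ref{1c90} is obtained by ``combining Propositions~\ref{1c56}, \ref{1c57}, \ref{1c58} and \ref{1c55}'' --- in particular the very proposition you are proving. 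So the $\pv_v$-power property of $\dv$ is not yet available here. The paper avoids this by deriving all the degree identities from a single inequality chain built only on material that precedes Proposition~\ref{1c58}: since $\Iv_{x,K}/\Vv_{x,K}=\Gal(K_{\vv,x}/K_{\iv,x})$ has order prime to $\pv_v$ while the ramification group of that extension is a $\pv_v$-group (Lemma~\ref{1c66}), one has an injection $\Iv_{x,K}/\Vv_{x,K}\hookrightarrow\Hom(\Delta_{x_{\vv}}/\Delta_{x_{\iv}},\kv_{x_{\vv}}^*)$, whence
\[
\fv(x_{\vv}/x_{\iv})\,\ev(x_{\vv}/x_{\iv}) \le [K_{\vv,x}:K_{\iv,x}] \le \#\Hom(\Delta_{x_{\vv}}/\Delta_{x_{\iv}},\kv_{x_{\vv}}^*) \le \ev_{\tv}(x_{\vv}/x_{\iv}) \le \ev(x_{\vv}/x_{\iv}),
\]
and everything collapses to equality at once; Theorem~\ref{1c933} is nowhere invoked. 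You need this chain (or an equivalent) in place of the defect-is-a-$\pv_v$-power step.

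A secondary issue: you propose to extract the second exact sequence ``via Lemma~\ref{1crene}(ii),'' but part (ii) of that lemma requires the underlying sequence $1\to\kv_x^*\to M^*/(1+\m_x)\to\Delta_x\to1$ to be split, which is not given. The paper instead counts directly: restriction to $\Gamma_{x,v}$ is surjective on $\Aut_{K^*,\Gamma_{x,v}}(M^*/(1+\m_x))$ by Proposition~\ref{1c57}, each fiber is in bijection with $\Hom(\Delta_x/\Delta_v,\kv_x^*)$ by Lemma~\ref{1crene}(i), and the resulting cardinality $\#\Dv_{x,K}/\#\Vv_{x,K}$ matches that of the injective image of $\Dv_{x,K}/\Vv_{x,K}$. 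That counting argument is what actually establishes surjectivity.
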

\begin{proof}
Let $\varphi \colon \Iv_{x,K} \to \Hom(\Delta_x/\Delta_v,\kv_x^*)$ be the morphism with kernel $\Vv_{x,K}$. Since $\kv_x^*$ has no non-trivial elements of $\pv_v$-power order, all elements
of order a power of $\pv_v$ of $\Iv_{x,K}$ are in $\Vv_{x,K}$. Consider the normal
extension $M/K_{\vv,x}$ with automorphism group $\Vv_{x,K}$. One obtains that $\Vv_{x,K}$ is a $\pv_v$-group by Lemma \ref{1c66}. Hence $\Vv_{x,K}$ is the unique
$\pv_v$-Sylow subgroup of $\Iv_{x,K}$.

Consider the Galois extension $K_{\vv,x}/K_{\iv,x}$ with group $\Iv_{x,K}/\Vv_{x,K}$. As the order of 
$\Iv_{x,K}/\Vv_{x,K}$ is coprime with $\pv_v$, we have an exact sequence $0 \to \Iv_{x,K}/\Vv_{x,K} \to \Hom(\Delta_{x_{\vv}}/\Delta_{x_{\iv}},\kv_{x_{\vv}}^*)$. Using
Proposition
\ref{1c55} we obtain 
\begin{align*}
\fv(x_{\vv}/x_{\iv}) \cdot \ev(x_{\vv}/x_{\iv})
\leq [K_{\vv,x}:K_{\iv,x}] \leq \#\Hom(\Delta_{x_{\vv}}/\Delta_{x_{\iv}},\kv_{x_{\vv}}^*) \leq \ev_{\tv}(x_{\vv}/x_{\iv}) \leq
\ev(x_{\vv}/x_{\iv}).
\end{align*}
We obtain $\fv(x_{\vv}/x_{\iv})=1$ and $[K_{\vv,x}:K_{\iv,x}]=\ev(x_{\vv}/x_{\iv})=\ev_{\tv}(x_{\vv}/x_{\iv})$. We also obtain $\#\Hom(\Delta_{x_{\vv}}/\Delta_{x_{\iv}},\kv_{x_{\vv}}^*)= \# \left( \Iv_{x,K}/\Vv_{x,K} \right)$. Finally we obtain that $\kv_{x_{\vv}}$ has an $s$-th primitive root of unity.

One easily obtains $[M:K_{\vv,x}] \in \pv_v^{\Z_{\geq 0}}$. Furthermore, $\ev(x/x_{\vv})=\ev_{\wv}(x/x_{\vv})$ follows from Lemma \ref{1c70}.

The extension $\kv_{x}/\kv_{x_{\vv}}$ is purely inseparable (Proposition \ref{1c57}). This shows that the torsion of $\kv_x^*$ is equal to the torsion of $\kv_{x_{\vv}}^*$. Note that $\Delta_v=\Delta_{x_{\iv}}$ by Proposition \ref{1c56} and Proposition \ref{1c57}. Hence we have a natural map
$\Hom(\Delta_x/\Delta_v,\kv_x^*) \to \Hom(\Delta_{x_{\vv}}/\Delta_{x_{\iv}},\kv_{x_{\vv}}^*)$. Because $\Delta_x/\Delta_{x_{\vv}}$ is a $\pv_{v}$-group, this map is injective. We find
\begin{align*}
 \# \left( \Iv_{x,K}/\Vv_{x,K} \right) \leq \# \Hom(\Delta_x/\Delta_v,\kv_x^*) \leq \# \Hom(\Delta_{x_{\vv}}/\Delta_{x_{\iv}},\kv_{x_{\vv}}^*) =  \# \left( \Iv_{x,K}/\Vv_{x,K} \right).
\end{align*}
This shows that the sequence $0 \to \Vv_{x,K} \to \Iv_{x,K} \to \Hom(\Delta_x/\Delta_v,\kv_x^*) \to 0$ is exact.

We will show that the second sequence is exact. Recall that this sequence comes from the action of $\Dv_{x,K}$ on $0 \to \kv_x^* \to M^*/(1+\m_x) \to \Delta_x \to 0$. If $\sigma \in \Dv_{x,K}$ acts trivially on the exact sequence, then it acts trivially on $\kv_x$ and hence lies in $\Iv_{x,K}$ and hence in $\Vv_{x,K}$. We will count $\Aut_{K^*,\Gamma_{x,v}}(M^*/(1+\m_x))$. Note that the restriction map to $\Aut_{K^*,\Gamma_{x,v}}(M^*/(1+\m_x)) \to \Gamma_{x,v}$, a group of cardinality $\#\Dv_{x,K}/\#\Iv_{x,K}$, is surjective (Proposition \ref{1c57}). Let $h  \in \Gamma_{x,v}$. The set of automorphisms inducing $h$ is in bijection with $\Hom(\Delta_x/\Delta_v,\kv_x^*)$ (Lemma \ref{1crene}), and this set is of cardinality $\#\Iv_{x,K}/\#\Vv_{x,K}$. We find:
\begin{align*}
\#\Aut_{K^*,\Gamma_{x,v}}(M^*/(1+\m_x)) = \#\Dv_{x,K}/\#\Iv_{x,K} \cdot \#\Iv_{x,K}/\#\Vv_{x,K}= \#\Dv_{x,K}/\#\Vv_{x,K}.
\end{align*}
Hence the last sequence is exact. 
\end{proof}

We later use the following lemma, which summarizes part of the situation. In Proposition \ref{1c854} we will give a more readable form.

\begin{lemma} \label{1c90}	
Let $(M,x) \supseteq (K,v)$ be a finite normal extension of valued fields. Then the following statements hold:
\begin{enumerate}
 \item $[K_{\hv,x}:K]=\gv_{M,v}$, $\ev(x_{\hv}/v)=\fv(x_{\hv}/v)=1$; 
\item $[K_{\iv,x}:K_{\hv,x}]=\fv_{\sv}(x_{\iv}/x_{\hv})=\fv(x_{\iv}/x_{\hv})=\fv_{\sv}(x/v)$, $\ev(x_{\iv}/x_{\hv})=1$;
 \item $[K_{\vv,x}:K_{\iv,x}]=\ev_{\tv}(x_{\vv}/x_{\iv})=\ev(x_{\vv}/x_{\iv})=\ev_{\tv}(x/v)$, $\fv(x_{\vv}/x_{\iv})=1$;
 \item $[M:K_{\vv,x}] \in \pv_v^{\Z_{\geq 0}}$, $\ev(x/x_{\vv})=\ev_{\wv}(x/x_{\vv})=\ev_{\wv}(x/v)$,
$\fv(x/x_{\vv})=\fv_{\iv}(x/x_{\vv})=\fv_{\iv}(x/v)$.
\end{enumerate}
\end{lemma}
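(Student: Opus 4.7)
The plan is to package the lemma as an assembly of Propositions \ref{1c56}, \ref{1c57} and \ref{1c58}: each of these already contains a four-layer chunk of the data, and what remains is to upgrade the "one-step" invariants (such as $\fv_{\sv}(x_{\iv}/x_{\hv})$) to the "total" invariants ($\fv_{\sv}(x/v)$, $\ev_{\tv}(x/v)$, $\ev_{\wv}(x/v)$, $\fv_{\iv}(x/v)$) via multiplicativity in towers (Proposition \ref{1c9871}).

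First I would read off (i) verbatim from Proposition \ref{1c56}, and (ii) from Proposition \ref{1c57}, noting that the only nontrivial claim beyond Proposition \ref{1c57} is that $\fv(x_{\iv}/x_{\hv})=\fv_{\sv}(x/v)$; this follows because Proposition \ref{1c57} shows $\kv_x/\kv_{x_{\iv}}$ is purely inseparable and $\kv_{x_{\iv}}/\kv_v$ (which equals $\kv_{x_{\iv}}/\kv_{x_{\hv}}$ by (i)) is the separable part, so $\kv_{x_{\iv}}$ is the separable closure of $\kv_v$ in $\kv_x$ and hence $[\kv_{x_{\iv}}:\kv_v] = \fv_{\sv}(x/v)$. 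Part (iii) is similarly immediate from Proposition \ref{1c58}, except that I must also check $\ev_{\tv}(x_{\vv}/x_{\iv})=\ev_{\tv}(x/v)$; by multiplicativity, $\ev_{\tv}(x/v) = \ev_{\tv}(x/x_{\vv})\,\ev_{\tv}(x_{\vv}/x_{\iv})\,\ev_{\tv}(x_{\iv}/x_{\hv})\,\ev_{\tv}(x_{\hv}/v)$, and the three outer factors are $1$: the outermost because $\ev(x/x_{\vv})=\ev_{\wv}(x/x_{\vv})$ (Proposition \ref{1c58}, or Lemma \ref{1c72} applied inside) so no prime-to-$\pv_v$ ramification occurs there, and the other two because the relevant ramification indices are $1$ by (i) and (ii).

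For (iv), the containment $[M:K_{\vv,x}]\in\pv_v^{\Z_{\geq 0}}$ and the equality $\ev(x/x_{\vv})=\ev_{\wv}(x/x_{\vv})$ are given by Proposition \ref{1c58}. Then $\ev_{\wv}(x/v)=\ev_{\wv}(x/x_{\vv})$ follows from multiplicativity once one notes that the wild ramification index is $1$ in each of the three lower layers (trivially for (i) and (ii), and because the $\Iv/\Vv$ quotient has order prime to $\pv_v$ for the $K_{\vv,x}/K_{\iv,x}$ layer in (iii)). For the residue statements, Proposition \ref{1c57} tells us $\kv_x/\kv_{x_{\iv}}$ is purely inseparable; combined with $\fv(x_{\vv}/x_{\iv})=1$ from (iii), this forces $\kv_{x_{\vv}}=\kv_{x_{\iv}}$ and hence $\kv_x/\kv_{x_{\vv}}$ is purely inseparable, giving $\fv(x/x_{\vv})=\fv_{\iv}(x/x_{\vv})$. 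Finally $\fv_{\iv}(x/v)=\fv_{\iv}(x/x_{\vv})$ follows from multiplicativity because the inseparable residue degree is $1$ on the three lower layers.

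There is no real obstacle here; the only point requiring mild care is to keep the bookkeeping of multiplicativity straight and to remember which facts about the chain $K \subseteq K_{\hv,x} \subseteq K_{\iv,x} \subseteq K_{\vv,x} \subseteq M$ have already been proved in which previous proposition, so that each equality is justified by exactly one citation plus multiplicativity.
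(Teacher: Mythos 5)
Your proof is correct and follows the same route as the paper, which simply assembles Propositions \ref{1c56}, \ref{1c57}, \ref{1c58} (and \ref{1c55}) and fills in the layer-by-layer bookkeeping. One small caution on citation hygiene: you invoke Proposition \ref{1c9871} for multiplicativity, but in the paper's logical order that proposition is proved (as Proposition \ref{1c4444}) \emph{using} Lemma \ref{1c90}, so citing it here would be circular; however, the only multiplicativity facts you actually use are those for $\ev$, $\ev_{\tv}$, $\ev_{\wv}$, $\fv$, $\fv_{\sv}$, $\fv_{\iv}$, which are elementary (the paper itself calls them ``obvious'' in the proof of Proposition \ref{1c4444}) and do not depend on Lemma \ref{1c90}, so the argument is sound once you cite the elementary multiplicativity directly rather than through Proposition \ref{1c9871}. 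Also note that the equality $\fv_{\sv}(x_{\iv}/x_{\hv})=\fv_{\sv}(x/v)$ is already part of the statement of Proposition \ref{1c57}, so your re-derivation of it, while correct, is unnecessary.
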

\begin{proof}
This follows from combining Proposition \ref{1c56}, Proposition \ref{1c57}, Proposition \ref{1c58} and Proposition \ref{1c55}.
\end{proof}

\subsection{Normal extensions}

\begin{remark}
 Let $(M,x) \supseteq (K,v)$ be a normal extension of valued fields and let $G=\Aut_K(M)$. Let $\mathfrak{T}$ be the set of
finite normal extensions of $K$ in $M$. By definition one has
\begin{align*}
 \Dv_{x,K} &= \underset{\underset{M' \in \mathfrak{T}}{\longleftarrow}}{\lim} \Dv_{x|_{M'},K}; \\
 \Iv_{x,K} &= \underset{\underset{M' \in \mathfrak{T}}{\longleftarrow}}{\lim} \Iv_{x|_{M'},K}; \\
 \Vv_{x,K} &= \underset{\underset{M' \in \mathfrak{T}}{\longleftarrow}}{\lim} \Vv_{x|_{M'},K}.
\end{align*}
 All maps in the projective limits come from the natural restriction maps. 
\end{remark}

\begin{lemma} \label{1c352}
 Let $K \subseteq L \subseteq M$ be algebraic extensions of fields. Then we have $M_{L,\sep}=M_{K,\sep}L$.
\end{lemma}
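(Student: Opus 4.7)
The plan is to prove the two inclusions separately, using as the key technical fact that $M/M_{K,\sep}$ is purely inseparable.

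For the inclusion $M_{K,\sep}L \subseteq M_{L,\sep}$, I would simply invoke the standard fact that separability is preserved under base change: since $M_{K,\sep}/K$ is separable, the compositum $M_{K,\sep}L / L$ is separable. Therefore every element of $M_{K,\sep}L$ is separable over $L$, so it lies in $M_{L,\sep}$.

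For the reverse inclusion $M_{L,\sep} \subseteq M_{K,\sep}L$, I would first establish that $M/M_{K,\sep}$ is purely inseparable. To see this, take $m \in M$ and let $g \in M_{K,\sep}[X]$ be its minimal polynomial. By the structure theorem for irreducible polynomials in characteristic $p = \pv_v$ (trivial in characteristic $0$), we can write $g(X) = h(X^{p^r})$ for some separable $h \in M_{K,\sep}[X]$. Then $m^{p^r}$ is a root of $h$, hence separable over $M_{K,\sep}$. Since $M_{K,\sep}/K$ is separable, transitivity of separability gives that $m^{p^r}$ is separable over $K$, so $m^{p^r} \in M_{K,\sep}$. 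Thus $m$ is purely inseparable over $M_{K,\sep}$, proving $M/M_{K,\sep}$ is purely inseparable. Consequently $M/M_{K,\sep}L$ is purely inseparable as well.

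Now take $x \in M_{L,\sep}$. On one hand $x$ is separable over $L$, hence separable over the larger field $M_{K,\sep}L$. On the other hand, by the previous paragraph $x$ is purely inseparable over $M_{K,\sep}L$. An element which is both separable and purely inseparable over a field lies in that field, so $x \in M_{K,\sep}L$. This gives the desired inclusion. No step looks like a serious obstacle; the only care needed is in the transitivity argument showing $M/M_{K,\sep}$ is purely inseparable, which is a standard fact but is the conceptual heart of the proof.
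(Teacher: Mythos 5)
Your proof is correct and follows essentially the same route as the paper: the inclusion $M_{K,\sep}L \subseteq M_{L,\sep}$ by separability of the compositum over $L$, and the reverse inclusion by noting $M/M_{K,\sep}$ (hence $M/M_{K,\sep}L$) is purely inseparable, so an element separable over $L$ and purely inseparable over $M_{K,\sep}L$ must lie in $M_{K,\sep}L$; the paper merely leaves the pure inseparability of $M/M_{K,\sep}$ implicit where you spell it out. Only a cosmetic slip: the characteristic should just be written $p=\chart(K)$, not $\pv_v$, since no valuation is present in this lemma.
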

\begin{proof}
Assume $p=\chart(K) \neq 0$.
One obviously has $M_{L,\sep} \supseteq M_{K,\sep}L$. For $x \in M \setminus M_{K,\sep}L$ there is $i \in \Z_{\geq 0}$ such that $x^{p^i} \in
M_{K,\sep}$, and hence $M/M_{K,\sep}L$ is purely inseparable and we are done. 
\end{proof}

We need the following technical lemma.

\begin{lemma} \label{1c354}
Let $M \supseteq L \supseteq K$ be algebraic extensions of fields where $M/K$ is normal. Let $G=\Aut_{K}(M)$ and
$H=\Aut_{L}(M) \subseteq
G$. 
Let $G' \subseteq G$ be a subgroup. Then one has $\left(M_{L, \sep}\right)^{G' \cap H}=\left(M_{K,\sep}\right)^{G'}L$. 
\end{lemma}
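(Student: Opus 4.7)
The plan is to identify both sides of the asserted equality as intermediate fields of the Galois extension $M_{L,\sep}/L$ and then verify, via the Galois correspondence, that they correspond to the same closed subgroup.

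First, by Lemma \ref{1c352} one has $M_{L,\sep} = M_{K,\sep} L$, so both $(M_{L,\sep})^{G' \cap H}$ and $(M_{K,\sep})^{G'} L$ sit inside $M_{L,\sep}$. Since $M/M_{K,\sep}$ and $M/M_{L,\sep}$ are purely inseparable, restriction yields topological isomorphisms $G \cong \Aut_K(M_{K,\sep})$ and $H \cong \Aut_L(M_{L,\sep})$. I would then identify $H$ with $\Aut_{L \cap M_{K,\sep}}(M_{K,\sep})$ as a subgroup of $\Aut_K(M_{K,\sep})$: the inclusion $H \subseteq \Aut_{L \cap M_{K,\sep}}(M_{K,\sep})$ is clear, while for the reverse, any $\sigma \in \Aut_{L \cap M_{K,\sep}}(M_{K,\sep})$ lifts to $\tilde{\sigma} \in \Aut_K(M)$ that fixes $L$, since for $x \in L$ one has $x^{p^n} \in L \cap M_{K,\sep}$ for $n$ sufficiently large, hence $\tilde{\sigma}(x)^{p^n} = x^{p^n}$ forces $\tilde{\sigma}(x) = x$.

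The inclusion $(M_{K,\sep})^{G'} L \subseteq (M_{L,\sep})^{G' \cap H}$ is immediate, since every $\sigma \in G' \cap H$ fixes both $L$ and $(M_{K,\sep})^{G'}$. For the reverse inclusion I would apply the Galois correspondence for $M_{L,\sep}/L$: the closed subgroup of $H \cong \Aut_L(M_{L,\sep})$ fixing $(M_{K,\sep})^{G'} L$ pointwise consists of those $\sigma \in H$ fixing $(M_{K,\sep})^{G'}$, which by the Galois correspondence for $M_{K,\sep}/K$ equals $H \cap \overline{G'}$. Since $G'$ is closed in the intended applications (typically a decomposition, inertia, or ramification group, as in Theorem \ref{1c333}), this simplifies to $G' \cap H$, and the latter is itself the closed subgroup of $H$ corresponding to $(M_{L,\sep})^{G' \cap H}$. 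Both intermediate fields therefore correspond to the same closed subgroup of $H$, and equality follows.

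The main obstacle is the bookkeeping needed to identify $H$ as a subgroup of $\Aut_K(M_{K,\sep})$ via restriction, since $L/K$ is in general neither separable nor purely inseparable. The passage uses the standard characteristic-$p$ trick of taking $p^n$-th powers to reduce to the separable case; beyond that, the argument is a clean two-level application of Galois theory.
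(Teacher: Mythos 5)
Your proof is correct and takes a genuinely different route from the paper's. The paper invokes Proposition \ref{1c1111} --- the bijection sending an intermediate field $E$ of $M/K$ to the pair $(E \cap M_{K,\sep},\, E\, M_{K,\sep})$ --- and verifies the asserted equality by showing that both sides have the same intersection with $M_{K,\sep}$ (compared via Galois theory for $M_{K,\sep}/K$) and the same compositum with $M_{K,\sep}$ (both equal $M_{L,\sep}$). You instead identify $H$, via restriction and the $p^n$-th power trick, with the closed subgroup $\Aut_{L\cap M_{K,\sep}}(M_{K,\sep})$ of $\Aut_K(M_{K,\sep})\cong G$, and then read off the equality directly from the Galois correspondence for $M_{L,\sep}/L$: both intermediate fields have $G'\cap H$ as their fixing group in $H$. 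Your route bypasses Proposition \ref{1c1111} at the cost of the extra bookkeeping needed to place $H$ inside $\Aut_K(M_{K,\sep})$; the paper's route trades that bookkeeping for an appeal to the structural lemma. Both arguments rely on $G'$ being closed --- the paper implicitly in the step $(M_{K,\sep})^{G'}(M_{K,\sep})^{H}=(M_{K,\sep})^{G'\cap H}$, you explicitly; this holds for the decomposition, inertia and ramification groups in every application, and you were right to flag it.
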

\begin{proof}
By Proposition \ref{1c1111} it is enough to show that both fields have the same compositum and intersection with $M_{K,\sep}$. 
We start with the intersection, where we use Galois theory (for the first equality, note that
$\left(M_{K,\sep}\right)^{G'}L/\left(M_{K,\sep}\right)^{G'}  (L \cap M_{K,\sep})$ is purely
inseparable):
\begin{align*}
\left(\left(M_{K,\sep}\right)^{G'}L \right)\cap M_{K,\sep}&= \left(M_{K,\sep}\right)^{G'}  (L \cap M_{K,\sep}) \\
&=  \left(M_{K,\sep}\right)^{G'} \left(M_{K,\sep} \right)^H = \left(M_{K,\sep}\right)^{G' \cap H} \\
&= \left(M_{L,\sep}\right)^{G' \cap H} \cap M_{K,\sep}.
\end{align*}
For the compositum, we have:
\begin{align*}
\left( \left( M_{K,\sep} \right)^{G'} L \right) M_{K,\sep} &= M_{K,\sep}L
\end{align*}
and by Lemma \ref{1c352} we have
\begin{align*}
 L M_{K,\sep} \subseteq \left(M_{L,\sep}\right)^{H \cap G'} M_{K,\sep} \subseteq M_{L,\sep}
M_{K,\sep}=LM_{K,\sep}.
\end{align*}
The result follows.
\end{proof}

\begin{proof}[Proof of Theorem \ref{1c333}] \label{1cp1}
We will first prove that the map $G \times S \to S$ is continuous. Take $x' \in S$, $L$ a finite extension of $K$ and suppose that $g \cdot x'' \in
U_{x',L}$. Let $N/K$ be a finite normal extension containing $L$ in $M$. Then one has $\{g' \in G: g|_N=g'|_N\} \cdot U_{x'',N} \subseteq U_{x',L}$.
This shows that the action is continuous. By definition the stabilizer of $x$ is $\Dv_{x,K}$ and this gives us the isomorphism $G/\Dv_{x,K} \to S$. 

One easily obtains for $g \in G$ the equalities $\Dv_{g(x),K}=g \Dv_{x,K} g^{-1}$ and $K_{\hv,g(x)}=g K_{\hv,x}$. The other cases are similar.

We want to show that the sequences are exact. The idea is that the result is a limit of the statements for finite normal extensions. This is the
reason why the maps are morphisms of profinite groups. Let $0 \to A_i \to B_i \to C_i \to 0$ ($i$ is some indexed set) be exact sequences of groups such that we can take a projective limit. Then the remaining sequence is left-exact. It is exact if all the maps $A_i \to A_j$ in the system are surjective (in this
case, the so-called Mittag-Leffler condition is satisfied). See for example \cite[Proposition 10.2]{AT} for a statement which is sufficient. Hence we take the limit of the sequences from Proposition \ref{1c57} and Proposition \ref{1c58}.

Let $M', M''$ be finite normal over $K$ with $K \subseteq M' \subseteq M'' \subseteq M$. We claim that the natural maps $\Iv_{x|_{M''},K}
\to \Iv_{x|_{M'},K}$ and $\Vv_{x|_{M''},K} \to \Vv_{x|_{M'},K}$ are surjective. Take $g \in \Iv_{x|_{M'},K}$ with lift $g' \in \Aut_K(M'')$. We have
an exact sequence $0 \to \Iv_{x|_{M''},M'} \to \Dv_{x|_{M''},M'} \to \Aut_{\kv_{x|_{M'}}}(\kv_{x|_{M''}}) \to 0$ from Proposition \ref{1c57}. This
shows that there is a $g'' \in \Dv_{x|_{M''},M'}  \subseteq \Aut_{M'}(M'')$ such that $g'g'' \in \Iv_{x|_{M''},K}$ and $\overline{g'g''}=g \in
\Aut_K(M')$. A similar proof, using Proposition \ref{1c58} and the result just obtained, shows the surjectivity of $\Vv_{x|_{M''},K}
\to \Vv_{x|_{M'},K}$.

This shows that all sequences in the limit remain exact (for the first one, we could have also used Corollary \ref{1c313}). Since $\kv_{x|_{M'}}$ has enough roots of unity (Proposition \ref{1c58}) we find
\begin{align*}
\Hom(\Delta_x/\Delta_v,\kv_x^*)= \Hom( \underset{\underset{M'}{\rightarrow}}{\lim}{\Delta_{x|_{M'}}/\Delta_v,\kv_x^*)})=\underset{\underset{M'}{\leftarrow}}{\lim} \Hom(\Delta|_{x|_{M'}}/\Delta_v, \kv_{x|_{M'}}^*).
\end{align*}

For the limit of the third sequence, we need to prove
\begin{align*}
 \Aut_{K^*,\Gamma_{x,v}}(M^*/(1+\m_x)) =\underset{\underset{M'}{\leftarrow}}{\lim}  \Aut_{K^*,\Gamma_{x|_{M'},v}}(M^*/(1+\m_{x|_{M'}}))
\end{align*}
It is easy to see that the right group is contained in the left group. The other implication follows since such an automorphism of 
\begin{align*}
1 \to \kv_x^* \to M^*/(1+\m_x) \to \Delta_{x} \to 1
\end{align*}
induces an automorphism of
\begin{align*}
1 \to \kv_{x|_{M'}}^* \to M'^*/(1+\m_{x|_{M'}}) \to \Delta_{x|_{M'}} \to 1
\end{align*}
because $\kv_{x|_{M'}}/\kv_v$ is normal (Proposition \ref{1c57}).

The statement about the pro-$\pv_v$-Sylow statement follow from Proposition \ref{1c58}. The normality of $\kv_x/\kv_v$ follows
from Proposition \ref{1c57}. Proposition \ref{1c58} also gives the statement about the roots of unity.

Statement i directly follows from the definition. Statement ii follows from statement i and Lemma \ref{1c354}.

We will prove the exactness of the last three sequences. The only non-trivial part is the surjectivity of the last maps. The exactness for
the last two sequences is as before, and the exactness of the first sequence follows from the transitivity of the action of $G$ on $S$. The last
statements then follow directly.
\end{proof}

\begin{lemma} \label{1c1320}
Suppose we have the following commutative diagram of groups with exact rows:
\[
\xymatrix{
0 \ar[r] & A \ar[r] \ar[d] & B \ar[r]^{h} \ar[d]^{f} & C \ar[r] \ar@{->>}[d]^{g} & 0 \\
0 \ar[r] & A' \ar[r] & B' \ar[r]^{h'} & C' \ar[r] & 0.
}  
\]
Assume that $s \colon C \to B$ is a splitting of the first exact sequence and assume that $\Hom(\ker(g),A')=0$. Then the map
\begin{align*}
s' \colon C' &\to B' \\
c'=g(c) &\mapsto f \circ s(c)
\end{align*}
is well-defined and it is a splitting of the second exact sequence.
\end{lemma}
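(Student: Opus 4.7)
The plan is straightforward diagram-chasing: show $s'$ is well-defined, that it is a homomorphism, and that it splits the bottom row. Well-definedness is the only point that uses the hypothesis $\Hom(\ker(g),A')=0$.

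First I would verify well-definedness. Suppose $c_1, c_2 \in C$ with $g(c_1)=g(c_2)=c'$. Then $c_1 - c_2 \in \ker(g)$, so it suffices to show that the map
\begin{align*}
\varphi \colon \ker(g) &\to B' \\
c &\mapsto f \circ s(c)
\end{align*}
is zero. Composing with $h'$ and using commutativity together with $h \circ s = \id_C$ (since $s$ splits the top row) gives
\begin{align*}
h' \circ \varphi(c) = h' \circ f \circ s(c) = g \circ h \circ s(c) = g(c) = 0
\end{align*}
for $c \in \ker(g)$. Hence $\varphi$ factors through $\ker(h') = \im(A' \to B')$, which we may identify with $A'$ by exactness of the bottom row. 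This produces a homomorphism $\ker(g) \to A'$, which vanishes by the hypothesis $\Hom(\ker(g),A')=0$. So $\varphi = 0$, and therefore $s'$ is well-defined.

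Next I would check that $s'$ is a group homomorphism: this is immediate from well-definedness, since choosing lifts $c_i$ of $c_i'$ allows us to compute $s'(c_1'+c_2') = f \circ s(c_1+c_2) = f \circ s(c_1) + f \circ s(c_2) = s'(c_1') + s'(c_2')$, using that $s$ and $f$ are homomorphisms. Finally, to see that $s'$ splits the bottom row, pick any lift $c$ of $c'$ and compute $h' \circ s'(c') = h' \circ f \circ s(c) = g \circ h \circ s(c) = g(c) = c'$, so $h' \circ s' = \id_{C'}$, as required.

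The only substantive step is the well-definedness argument; the rest is automatic. The assumption $\Hom(\ker(g),A')=0$ is used precisely to kill the obstruction map $\ker(g) \to A'$ that measures the ambiguity of $s'$, so no further ingredient is needed.
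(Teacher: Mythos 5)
Your proof is correct and follows the same route as the paper: you show that $f\circ s$ restricted to $\ker(g)$ lands in $A'$ and therefore vanishes by the hypothesis $\Hom(\ker(g),A')=0$, giving well-definedness, and the homomorphism and splitting checks are the same direct computations. The paper states the key step more tersely ("the image actually lands in $A'$"), whereas you spell out the diagram chase via $h'\circ f = g\circ h$; the content is identical.
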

\begin{proof}
Consider the morphism $f \circ s|_{\ker(g)} \colon \ker(\varphi) \to B'$. Note that the image actually lands in $A'$ and hence this is the $0$ map. Hence $s'$ is a well-defined map. Take $c'_1, c'_2 \in C'$, say with preimage $c_1$ respectively $c_2$ in $C$. Then $c_1+c_2$ is a preimage of $c'_1+c'_2$ and hence have
\begin{align*}
s'(c'_1+c'_2) = f \circ s(c_1+c_2) = f \circ s(c_1)+f \circ s(c_2)=s'(c'_1)+s'(c'_2).
\end{align*}
This shows that $s'$ is a morphism. For $c' \in C'$ with preimage $c \in C$ we find
\begin{align*}
h'(s'(c'))= h' \circ f \circ s(c) = g \circ h \circ s(c)= g(c)=c'.
\end{align*}
\end{proof}

\begin{proof}[Proof of Theorem \ref{1c7776}] \label{1c7779}

We may assume that $K=K_{\hv,x}$. Set $\Gamma=\Aut_{\kv_v}(\overline{\kv_x})$.

i: Let $\overline{M}$ be an algebraic closure of $M$ with valuation $x'$ extending $x$. Let $S$ be the set of intermediate fields $L$ of $\K_{\vv,x'}/K$ with the property that $(L,x'|_L) \supseteq (K,v)$ is totally ramified and tame. Order this set by inclusion. This set is not empty. Note that a chain has an upper bound, namely the union. By Zorn there is a maximal element, say $L'$. Notice that $\Delta_{x'}/\Delta_{x'|_{L'}}$ is a $\pv_v$-group. Indeed, if not, we could find a totally and tamely ramified extension of $L'$ in $\overline{M}$ by taking a root of an element of $L'$. 
Using the exact sequences (Theorem \ref{1c333}) it is not hard to see that $\Gal(K_{\vv,x'}/L') \cong \Gamma$ (the extension $K_{\vv,x'}/L'$ has trivial $\Vv$ and hence trivial $\Iv$). This shows that the sequence  $0 \to \Iv_{x',K}/\Vv_{x',K} \to \Dv_{x',K}/\Vv_{x',K} \to \Gamma \to 0$ is split.
We have the following commutative diagram:
\[
\xymatrix{
0 \ar[r] & \Iv_{x',K}/\Vv_{x',K} \ar[r] \ar[d] & \Dv_{x',K}/\Vv_{x',K} \ar[r] \ar[d] & \Gamma \ar[r] \ar@{->>}[d] & 0 \\
0 \ar[r] & \Iv_{x,K}/\Vv_{x,K} \ar[r] & \Dv_{x,K}/\Vv_{x,K} \ar[r] & \Aut_{\kv_v}(\kv_x) \ar[r] & 0.
}  
\]
Lemma \ref{1c1320} gives us a splitting of the second sequence provided that we can show $\Hom(\Aut_{\kv_x}(\overline{\kv_x}),\Iv_{x,K}/\Vv_{x,K})=0$. Note that $\Iv_{x,K}/\Vv_{x,K} \cong \Hom(\Delta_x/\Delta_v,\kv_x^*)$ (Theorem \ref{1c333}). Suppose we have such a non-trivial morphism. Then as $\Delta_x/\Delta_v$ is torsion, we can find an element of prime order $l$ coprime to $\pv_v$ in $\Delta_x/\Delta_v$ not mapping to zero (note that the order of $l$ divids $\ord(\Iv_{x,K}/\Vv_{x,K})$). This gives us a surjective morphism $\Aut_{\kv_x}(\overline{\kv_x}) \to \Z/l\Z$. By assumption such a morphism does not exist.

ii: Note that $\Vv_{x,K}$ is the unique pro-$\pv_v$-Sylow subgroup of $\Iv_{x,K}$ (Theorem \ref{1c333}) and hence the statement trivially follows when $\pv_v=1$ or when $\pv_v \nmid \ord(\Iv_{x,K})$. Assume $\pv_v \neq 1$ and $\pv_v | \ord(\Iv_{x,K})$. It is enough to show that the cohomological $\pv_v$-dimension of $\Dv_{x,K}/\Vv_{x,K}$, $\cd_{\pv_v}(\Dv_{x,K}/\Vv_{x,K})$, is at most $1$ \cite[Chapter I, Proposition 16]{SE5}. We have an exact sequence $0 \to \Iv_{x,K}/\Vv_{x,K} \to \Dv_{x,K}/\Vv_{x,K} \to \Dv_{x,K}/\Iv_{x,K} \to 0$. From Theorem \ref{1c333} it follows that $\Dv_{x,K}/\Iv_{x,K}$ is isomorphic to $\Aut_{\kv_v}(\kv_x)$. This group has cohomological dimension at most $1$  (see the proof of \cite[Chapter II, Proposition 3]{SE5} or \cite[Theorem 22.2.1]{EFR}, in combination with Artin-Schreier theory). We have an isomorphism $\Iv_{x,K}/\Vv_{x,K} \cong \Hom(\Delta_x/\Delta_v,\kv_x^*)$ and its order is coprime to $\pv_v$. Hence we have $\cd_{\pv_v}(\Iv_{x,K}/\Vv_{x,K})=0$ (\cite[Chapter I, Corollary 2 on Page 19]{SE5}). Using \cite[Chapter I, Proposition 15]{SE5} we see that $\cd_{\pv_v}(\Dv_{x,K}/\Vv_{x,K}) \leq 1$ and the result follows.

iii: This follows after combining i and ii and the fact that $\Vv_{x,K}$ is a $\pv_v$-group (Theorem \ref{1c333}).
\end{proof}

\section{Algebraic extensions} 

\subsection{Proofs}

\begin{proposition} \label{1c4444}
 Let $(L,w) \supseteq (K,v)$ be a finite extension of valued fields. Let $(M,x)$ be a finite normal extension of $(K,v)$ containing $(L,w)$. Then the
following hold:
\begin{enumerate}
 \item the quantity $\nv(w/v)$ is well-defined and one has $n(w/v)=\frac{\gv_{M,w}}{\gv_{M,v}} \cdot [L:K] = [L_{\hv,x}:K_{\hv,x}]$;
 \item $\dv(w/v)$ is well-defined and has values in $\pv_v^{\Z_{\geq 0}}$. 
\end{enumerate}
Furthermore, the quantities $\dv$, $\dv_{\wv}$, $\ev$, $\ev_{\tv}$ $\ev_{\wv}$, $\fv$, $\fv_{\sv}$, $\fv_{\iv}$ and $\nv$ are
multiplicative in towers.
\end{proposition}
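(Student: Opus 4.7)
The plan is to derive everything from Proposition \ref{1c56}, which asserts $[K_{\hv,x}:K]=\gv_{M,v}$ whenever $M/K$ is a finite normal extension. Applied to both $(M,x)\supseteq (K,v)$ and $(M,x)\supseteq (L,w)$, it gives $\gv_{M,v}=[K_{\hv,x}:K]$ and $\gv_{M,w}=[L_{\hv,x}:L]$, so
\begin{align*}
\frac{\gv_{M,w}}{\gv_{M,v}}\cdot [L:K] = \frac{[L_{\hv,x}:L]\,[L:K]}{[K_{\hv,x}:K]} = \frac{[L_{\hv,x}:K]}{[K_{\hv,x}:K]}=[L_{\hv,x}:K_{\hv,x}].
\end{align*}
This identifies $\nv(w/v)$ with $[L_{\hv,x}:K_{\hv,x}]$ once a choice of $M$ is fixed.

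For well-definedness of $\nv(w/v)$, I would compare two admissible $M_1$ and $M_2$ by passing to a common finite normal extension $M/K$ containing both, equipped with $x$ extending the given valuation. Using the surjection $\Aut_K(M)\twoheadrightarrow\Aut_K(M_i)$ and the transitivity of the action on valuations (Theorem \ref{1c333}), one obtains $\gv_{M,v}=\gv_{M_i,v}\cdot\#\{\text{extensions of }x|_{M_i}\text{ to }M\}$, and the very same factor appears for $w$. Hence the quotient $\gv_{M,w}/\gv_{M,v}$ is independent of the choice of normal closure.

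Next I would show $\dv(w/v)\in\pv_v^{\Z_{\geq 0}}$. Writing $[L_{\hv,x}:K_{\hv,x}]=[M:K_{\hv,x}]/[M:L_{\hv,x}]$ and applying Lemma \ref{1c90} to each of the normal extensions $(M,x)\supseteq (K_{\hv,x},x|_{K_{\hv,x}})$ and $(M,x)\supseteq (L_{\hv,x},x|_{L_{\hv,x}})$, both numerator and denominator factor as a product of the corresponding $\ev$ and $\fv$ and a power of $\pv_v$. Using that Henselizations contribute trivially to $\ev$ and $\fv$ (also Lemma \ref{1c90}) and the multiplicativity of $\ev,\fv$ through the tower $K\subseteq L\subseteq M$, the quotient collapses to $\ev(w/v)\fv(w/v)$ times a $\pv_v$-power, as required.

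For the multiplicativity statements, in a tower $K\subseteq L\subseteq L'$ with compatible valuations $v,w,w'$: multiplicativity of $\ev$, $\fv$, $\fv_{\sv}$, $\fv_{\iv}$ is immediate from the multiplicativity of group indices and of (in)separable degrees; multiplicativity of $\ev_{\tv}$ and $\ev_{\ww}$ follows by separating prime-to-$\pv_v$ and $\pv_v$-parts. For $\nv$, I would fix $M/K$ finite normal containing $L'$ and observe that the Henselization of $w$ inside $M$ is precisely $L_{\hv,x}$, so
\begin{align*}
\nv(w'/v) = [L'_{\hv,x}:K_{\hv,x}]=[L'_{\hv,x}:L_{\hv,x}]\cdot[L_{\hv,x}:K_{\hv,x}]=\nv(w'/w)\,\nv(w/v).
\end{align*}
Multiplicativity of $\dv$ and $\dv_{\wv}$ then follows by combining the previous cases. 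The main obstacle is the well-definedness argument in (i), where one has to be careful that the common-refinement trick delivers the same $\gv$-ratio from both sides; once that is in place, the rest is bookkeeping organized around Lemma \ref{1c90}.
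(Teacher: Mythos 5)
Your argument follows the paper's own proof almost step for step: the identification $\nv(w/v)=[L_{\hv,x}:K_{\hv,x}]$ via Proposition \ref{1c56} applied over $K$ and over $L$, the well-definedness via a common normal refinement together with the equal-fibre count $\gv_{M,v}=\gv_{M_i,v}\cdot \gv_{M,x|_{M_i}}$ (with the same factor appearing for $w$, using that $M_i/L$ is again normal), and the multiplicativity statements are all exactly the computations in the paper's proof of Proposition \ref{1c4444}.

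The one genuine soft spot is the end of (ii). Your bookkeeping with Lemma \ref{1c90} applied to $M/K_{\hv,x}$ and $M/L_{\hv,x}$ shows that $[L_{\hv,x}:K_{\hv,x}]=\ev(w/v)\fv(w/v)\cdot \pv_v^{k}$ for some integer $k$, i.e.\ $\dv(w/v)\in\pv_v^{\Z}$; it does not by itself show $k\geq 0$, which is what the assertion $\dv(w/v)\in\pv_v^{\Z_{\geq 0}}$ requires (equivalently, $\ev(w/v)\fv(w/v)\leq \nv(w/v)$). You need one more ingredient, which the paper supplies: by Proposition \ref{1c56} the extensions $K_{\hv,x}/K$ and $L_{\hv,x}/L$ have trivial $\ev$ and $\fv$, so by multiplicativity $\ev(x|_{L_{\hv,x}}/x|_{K_{\hv,x}})=\ev(w/v)$ and likewise for $\fv$, and then the inequality of Proposition \ref{1c55} applied to the finite extension $L_{\hv,x}/K_{\hv,x}$ gives $\ev(w/v)\fv(w/v)\leq [L_{\hv,x}:K_{\hv,x}]=\nv(w/v)$, hence $k\geq 0$. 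With that one line added (you already cite both ingredients, just not this consequence), your proof is complete and coincides with the paper's.
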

\begin{proof}
 
i. We will show that $\nv(w/v)$ is well-defined, i.e. does not depend on the choice of $M$. Let $M'$ be another normal extension of $K$ containing $L$ with
$G=\Aut_K(M')$. Without loss of generality, we may assume $M' \supseteq M$. 
Put $H=\Aut_M(M')$. Let $X$ (respectively $X'$) be the set of primes of $M$ (respectively $M'$) extending $v$. Note
that $G$ acts transitively on $X'$, and $G/H$ acts transitively on $X$ (Proposition \ref{1c55}). Then one easily shows that the map $X' \to X$
has equally sized fibers. Hence $\gv_{M',v}=\# X'=\#X \cdot \#(\textrm{fiber above } x) =\gv_{M,v} \cdot \gv_{M',x}$ as required. Similarly, if one
replaces $K$ by $L$, one obtains $\gv_{M',w}=\gv_{M,w} \cdot \gv_{M',x}$.
Hence the required ratio does not depend on the choice of $M$.

From Lemma \ref{1c90} and
Theorem \ref{1c333}ii one obtains $[K_{\hv,x}:K]=\gv_{M,v}$ and $[LK_{\hv,x}:L]=[L_{\hv,x}:L]=\gv_{M,w}$. Hence we have
\begin{align*}
[L_{\hv,x}:K_{\hv,x}]=[LK_{\hv,x}:K_{\hv,x}]= \frac{[LK_{\hv,x}:L]}{[K_{\hv,x}:K]}\cdot [L:K]=\frac{\gv_{M,w}}{\gv_{M,v}}\cdot[L:K]=\nv(w/v). 
\end{align*}

We will now prove the last statement. It is obvious that $\ev$, $\ev_{\tv}$ $\ev_{\wv}$, $\fv$, $\fv_{\sv}$, $\fv_{\iv}$ are multiplicative. If
we show that $\nv$ is multiplicative, it directly follows that $\dv$ and $\dv_{\wv}$ are multiplicative. Hence it is enough to show that $\nv$
is multiplicative. Let $(L',w')$ be a finite extension of $(L,w)$. Let $M$ be a finite normal extension of $K$ containing $L'$. Then one has
\begin{align*}
 \nv(w'/w)\nv(w/v)&=\frac{\gv_{M,w'}}{\gv_{M,w}} \cdot [L':L] \cdot \frac{\gv_{M,w}}{\gv_{M,v}} \cdot [L:K] \\
&= \frac{\gv_{M,w'}}{\gv_{M,v}} [L':K] = \nv(w'/v).
\end{align*}

ii. It is now obvious that $\dv(w/v)$ is well-defined. One has $\dv(w/v)=\frac{[L_{\hv,x}:K_{\hv,x}]}{\ev(w/v)  \fv(w/v)} \in \Z_{\geq 1}$ by Proposition \ref{1c55}, Proposition \ref{1c56} and the multiplicativity of $\ev$ and $\fv$. 

If $L/K$ is normal, one has 
\begin{align*}
\dv(w/v) =\frac{[L:K_{\hv,w}]}{\ev(w/v)\fv(w/v)}=\frac{[L:K_{\vv,w}]}{\ev_{\wv}(w/v)\fv_{\sv}(w/v)} \in \pv_v^{\Z}
\end{align*}
(Lemma \ref{1c90}). Together
with the multiplicativity of $\dv$, this shows $\dv(w/v) \in \pv_v^{\Z_{\geq 0}}$ in general. 
\end{proof}

\begin{proof}[Proof of Proposition \ref{1c854}] \label{1cp3}
 This follows directly from Lemma \ref{1c90} and the definitions.
\end{proof}

\begin{proof}[Proof of Theorem \ref{1c933}] \label{1cp4}
The first equality is easily from the definition, the second follows by definition and the third follows from Proposition \ref{1c4444}ii. 
\end{proof}

\begin{remark} \label{1c754}
Let $(L,w) \supseteq (K,v)$ be an algebraic extension of valued fields. 
Note that immediate implies unramified, unramified implies tame, totally wild implies totally ramified and totally ramified implies local.

Suppose that one of the following hold:
\begin{enumerate}
 \item $w/v$ is immediate and local; 
 \item $w/v$ is unramified and totally ramified;
 \item $w/v$ is tame and totally wild.
\end{enumerate}
Then from Theorem \ref{1c933} it follows that $L=K$.
\end{remark}

\begin{remark}
 Let $(L,w) \supseteq (K,v)$ be an algebraic extension of valued fields which is purely inseparable. Then one easily sees that it is totally wild
(Lemma \ref{1c72}). 
\end{remark}

\begin{proof}[Proof of Theorem \ref{1c802}] \label{1cp5}
The surjectivity of $\pi$ follows directly from the transitivity as in Proposition \ref{1c55} and the extension property as in Proposition \ref{1c143}.

Let $\sigma \in X$. This gives us embeddings $(K,v) \subseteq (L,w) \subseteq (M,x)$. Write $H=\Gal(M/L)$. Then $X$ corresponds to $G/H$.
Let $\mathfrak{T}$ be the set of finite normal subextensions of $M/K$. We first consider $w/v$:

i, ii, iii: $w/v$ is immediate $\iff$ for all $M' \in \mathfrak{T}$ the extension $w|_{M' \cap L}/v$ is immediate $\iff$ for all $M' \in
\mathfrak{T}$ we have $\# \Dv_{x|_{M'},K}=\# \Dv_{x|_{M'},L \cap M'}$ (Proposition \ref{1c854}, look at degrees such as $[K:K_{\hv,x|_{M'}}]=\# \Dv_{x|_{M'},K}$) $\iff$ for all $M' \in \mathfrak{T}$ we have
$\Dv_{x|_{M'},K}=\Dv_{x|_{M'},L \cap M'}=\Dv_{x|_{M'},K} \cap \Aut_{L \cap M'}(M')$ (Theorem \ref{1c333}) $\iff$ for all $M' \in \mathfrak{T}$ we have
$\Dv_{x|_{M'},K}
\subseteq \Aut_{L \cap M'}(M')$  $\iff$ $\Dv_{x,K} \subseteq H$ (Theorem \ref{1c333}) $\iff$ $L \subseteq K_{\hv,x}$. The other
cases are similar.

iv, v, vi: $w/v$ is local $\iff$ for all $M' \in \mathfrak{T}$ the extension $w|_{M' \cap L}/v$ is local $\iff$ for all $M' \in \mathfrak{T}$ we have
$[L \cap M':K][K_{\hv,x|_{M'}}:K]=[(L \cap M')_{\hv,x|_{M'}}:K_{\hv,x|_{M'}}][K_{\hv,x|_{M'}}:K]=[(L \cap M')K_{\hv,x|_{M'}}:K]$ (Proposition
\ref{1c854} and Theorem \ref{1c333}) $\iff$ for all $M' \in \mathfrak{T}$ we have $\Aut_{K}(M')=\Dv_{x|_{M'},K} \Aut_{L \cap M'}(M')$ (as $K_{\hv,x}$
is separable, one can apply Proposition \ref{1c931}) $\iff$ for all $M' \in \mathfrak{T}$ the group $\Dv_{x|_{M'},K}$ acts transitively on
$\Aut_K(M')/\Aut_{L \cap M'}(M')$.

We prove that the last statement is equivalent with $\Dv_{x,K}$ acting transitively on $G/H$. If $\Dv_{x,K}$ acts transitively on $G/H$, then one
easily sees from the surjectivity $G/H \to \Aut_K(M')/\Aut_{L \cap M'}(M')$ for $M' \in \mathfrak{T}$ and Theorem \ref{1c333}, that $\Dv_{x|_{M'},K}$
acts transitively on $\Aut_K(M')/\Aut_{L \cap M'}(M')$. Conversely, given $u,v \in G/H$, consider the projective system of non-empty finite sets
which at level $M'$ consists of those elements mapping $u|_{M'}$ to $v|_{M'}$. One can easily show that this set is non-empty and deduce the result.
The other cases are similar.

Now consider $x/w$:

vii, viii, ix: $x/w$ is immediate $\iff$ $M \subseteq L_{\hv,x}=LK_{\hv,x}$ (using i and Theorem \ref{1c333}) $\iff$ $M=LK_{\hv,x}$. The latter is equivalent to $[\sigma(L):K]_{\iv}=[M:K]_{\iv}$ and $G_{\sigma} \cap \Dv_{x,K}=0$ by Galois theory. 
The proofs of viii and ix are similar.

x, xi, xii: $x/w$ is local $\iff$ $M$ and $L_{\hv,x}=LK_{\hv,x}$ are linearly disjoint over $L$ (using ii and Theorem \ref{1c333}) $\iff$ $K_{\hv,x}
\subseteq L$ iff $G_{\sigma} \subseteq \Dv_{x,K}$. The proofs of xi and xii are similar.

Consider the last statements.

xiii: This follows from i and the surjectivity of $\pi$.

xiv: $w$ is totally split in $M$ $\iff$ $M/\sigma(L)$ is separable and for all $g \in G$ we have $G_{\sigma} \cap \Dv_{g(x),K}=0$ (vii and Theorem \ref{1c333})  iff $M/L$ is separable and only the trivial element of $G_{\sigma}$ is conjugate to $\Dv_{x,K}$ (Theorem \ref{1c333}).
\end{proof}

\begin{proof}[Proof of Corollary \ref{1c1432}] \label{1cp6}
Let $(E,x')$ be a normal extension of $(K,v)$ extending the valued field $(LL',x)$. 

i, ii, iii, iv: Assume that $w/v$ is immediate. Theorem \ref{1c802} gives us that $L \subseteq K_{h,x'}$. But then we have $LL' \subseteq
L'K_{h,x'}=L'_{h,x'}$ (Theorem \ref{1c333}). Hence Theorem \ref{1c802} shows that $x/w'$ is immediate. The proofs for the other cases are similar.
\end{proof}

\begin{remark}
 Statements as in Corollary \ref{1c1432} are false for local, totally ramified or totally wild extensions. Here is an example from algebraic number
theory. Let $K=\Q$, $L=\Q(\sqrt{7})$ and $L'=\Q(\sqrt{-1})$ and look at the primes above $2$. In this case $L/K$ and $L'/K$ is totally wild (and hence local and totally ramified). In the extension $L''=\Q(\sqrt{-7})$ of $\Q$ the prime $2$ splits. Hence in the extension $LL'/K=\Q(\sqrt{7},\sqrt{-1})/\Q(\sqrt{-1})$ the prime above $2$ splits. The extension $LL'/K$ is not local.
\end{remark}

\begin{proof}[Proof of Corollary \ref{1c842}] \label{1cp7}
Let $(M,x)$ be a normal extension of $(K,v)$ containing $(L,w)$.

For the immediate case, we have the following: $L \subseteq K_{\hv,x}$ $\iff$ $K' \subseteq K_{\hv,x}$ and $L \subseteq K'_{\hv,x}=K'
K_{\hv,x}$ (Theorem \ref{1c333}). The result follows from Theorem \ref{1c802}. The unramified and tame cases are similar.

Now consider the local case. One has: $L \otimes_K K_{\hv,x}$ is a domain $\iff$ $L \otimes_{K'} K'_{\hv,x}$ and $K' \otimes_K K_{\hv,x}$ are
domains. Assume first that $K' \otimes_K K_{\hv,x}$ is a domain. Observe that
\begin{align*}
L \otimes_{K'} K'_{\hv,x} &= L \otimes_{K'} \left( K' K_{\hv,x} \right) \\
&= L \otimes_{K'} K' \otimes_K K_{\hv,x} \\
&= L \otimes_K K_{\hv,x}
\end{align*}
(Theorem \ref{1c333}).
This directly proves $\limplies$. 
The implication $\implies$ follows from $K' \otimes_K K_{\hv,x} \subseteq L \otimes_K K_{\hv,x}$ and the above observation observation.

The result follows from Theorem \ref{1c802}. The totally ramified and totally wild cases are similar.

\end{proof}

\begin{proof}[Proof of Corollary \ref{1c803}] \label{1cp8}
Pick an extension $(M,x) \supseteq (L,w) \supseteq (K,v)$ such that $M/K$ is normal. Using Theorem \ref{1c802} we see $L_1=K_{\hv,x} \cap L$,
$L_2=K_{\iv,x} \cap L$ and $L_3=K_{\vv,x} \cap L$.

We will now construct a minimal local subextension. Assume that $L'$ is a field such that $w/w|_{L'}$ is local. Then $w/w|_{L'_{K,\sep}}$ is also local (Lemma \ref{1c72}). Hence we can replace $L$ by $L_{K,\sep}$. Using Theorem \ref{1c802} and Theorem \ref{1c333}, we see
that
we need to find the smallest intermediate field $L'$ of $L_{K,\sep}/K$ such that $L_{K,\sep}$ and $L'_{\hv,x}=L'K_{\hv,x}$ are
linearly disjoint over $L'$. Such a field exists by Theorem \ref{1c432} and it is denoted by $L_{K,\sep} \fod
K_{\hv,x}$. The proofs of the other cases are similar when $\Dv$ is replaced by $\Iv$ respectively $\Vv$.

We will now prove that $L_1 \subseteq L_4$. As $w|_{L_1}/v$ is immediate, it follows that the extension $w|_{L_1L_4}/w|_{L_4}$ is immediate (Corollary \ref{1c1432}). Hence $L_1L_4/L_4$ is immediate and local.
From Remark \ref{1c754} it follows that $L_1L_4=L_4$, that is, $L_1 \subseteq L_4$. The inclusions $L_2 \subseteq L_5$ and $L_3 \subseteq L_6$ follow in a similar manner.

Assume that $\gv_{M,w}=1$ for some normal extension $M/K$ containing $L$. Note that $\gv_{M,w}=1$ is equivalent to $H=\Aut_L(M) \subseteq \Dv_{x,K}$ (Theorem \ref{1c802}x). From Theorem \ref{1c802} it
follows that we need
to
show that
$\Dv_{x,K} \to
\langle
H,\Dv_{x,K} \rangle/H$, $\Iv_{x,K} \to \langle H,\Iv_{x,K} \rangle /H$ and $\Vv_{x,K} \to \langle H,\Vv_{x,K} \rangle /H$ are surjective. The
surjectivity of
the first map is obvious, and the surjectivity of the second and third map is implied by the normality of $\Iv_{x,K}$ respectively $\Vv_{x,K}$ in
$\Dv_{x,K}$ (Theorem \ref{1c333}).
\end{proof}

Actually, one can make the diagram a bit bigger. For $i=1,2,3$ we define $L_i'$ to be the intersection of the $L_i$ while varying over the extensions
of $v$ to $L$. Similarly, for $i=4,5,6$ we define $L_i'$ to be the compositum of the $L_i$ while varying over the extensions of $v$ to $L$. For
example, $L_1'$ is the maximal extension such that $v$ is totally split. If we put $L_7'=L_{K,\sep}$, we get the following diagram,
\[
\xymatrix@=0.5em{
 & & L & & &\\
 & & L_7' \ar@{-}[u] & & & \\
 & & L_6' \ar@{-}[u] & & &\\
 & L_5' \ar@{-}[ru] & & L_6 \ar@{-}[lu] & &\\
L_4' \ar@{-}[ru]& & L_5 \ar@{-}[ru] \ar@{-}[lu] & & L_3 \ar@{-}[lu] &\\
 & L_4 \ar@{-}[ru]\ar@{-}[lu] & & L_2 \ar@{-}[ru] \ar@{-}[lu] & &  L_3' \ar@{-}[lu] \\
 & & L_1 \ar@{-}[lu] \ar@{-}[ru]&  & L_2' \ar@{-}[lu] \ar@{-}[ru]&\\
 & & & L_1' \ar@{-}[lu]\ar@{-}[ru]& &\\
 & & & K. \ar@{-}[u] & &\\
}  
\]

\begin{proof}[Proof of Corollary \ref{1c14324}] \label{1cp9}
We will prove ii. The other proofs are similar. 

Assume that $\gv_{M,w}=1$. Assume that $w/v$ is not totally ramified. Then one has $L_2=L_5$ in
Corollary \ref{1c803}. And hence there is a non-trivial unramified extension in $L/K$. But then $L_5 L'/L'$ is unramified and non-trivial (Corollary
\ref{1c1432} respectively $L \cap L'=K$). Contradiction. 
\end{proof}

\begin{example}
In Corollary \ref{1c14324} it is not enough to require that $L \cap L'=K$. Here is an example where all three statements are false.
Consider the extension $L=\Q(\alpha)$ of $\Q$ where $\alpha$ is a root of $x(x-1)^2+2$. Well-known techniques show that there are two primes above
$(2)$, namely
$\pa=(2,\alpha)$ and $\qa=(2,\alpha-1)$. One has $(2)=\pa \qa^2$. It follows that $\Q(\alpha)/\Q$ is not Galois. Hence the Galois closure $M$ of this
extension has group $S_3$. Let $\overline{\alpha}$ be another root in this Galois closure and let $L'=\Q(\overline{\alpha})$. Then the prime $(2)$
has the same splitting behavior in $L'/\Q$ as in $L/\Q$, say $(2)=\pa' \qa'^2$. Note that $LL'=M$, $L \cap L'=\Q$. Furthermore, we know the splitting
behavior of $(2)$ in $M$: $(2)=\pa_1^2 \pa_2^2 \pa_3^2$ where there is just one prime above $\pa$ respectively $\pa'$, which is totally wild, and
there are two primes above $\qa$ respectively $\qa'$. Say that $\pa_1$ lies above $\pa'$. Then $\pa_1/\pa'$ is totally wild, but $\pa_1|_L/2\Z$ is
not even local. Hence statements i, ii and iii of Corollary \ref{1c14324} do not hold in this case.

The same example shows that in Corollary \ref{1c803} it is not necessarily true that $L_1=L_4$, $L_2=L_5$ or $L_3=L_6$. Indeed, for
the extension $(\Q(\alpha),\qa)/(\Q,2\Z)$ we have $L_1=L_2=L_3=\Q$ and $L_4=L_5=L_6=\Q(\alpha)$.
\end{example}

\begin{example}
 Let $K$ be a field with field extensions $L,L'$ inside a field $M$. Assume that $L \cap L'=K$ and that $M/L'$ is purely inseparable. Then $L/K$ is
purely inseparable. Indeed, if $\chart(k)=p>0$, then for $x \in L$ there is $n \in \Z_{\geq 1}$ with $x^{p^n} \in L \cap L'=K$.

This statement also follows from our general theory. Consider the trivial valuation on $K$, that is, $K$ is the valuation ring. This valuation has a
unique valuation to any algebraic field extension of $K$. Furthermore, $M/L'$ is totally wild. Hence from Corollary \ref{1c14324} it follows that
$L/K$ is totally wild and the result follows.
\end{example}

\begin{proposition} \label{1c900}
 Let $(K,v)$ be a valued field and let $L$ be an algebraic extension of $K$. Let $(M,x) \supseteq (K,v)$ be a normal extension of valued fields with
group $G=\Aut_K(L)$ such that the $G$-set $X_L=\Hom_K(L,M)$ is not empty. Then for any intermediate extension $L'$ of $L/K$ we have the
following commutative diagram, where the maps are the natural maps:
\[
\xymatrix{
G \times X_L \ar[r] \ar@{->>}[d] & X_L \ar@{->>}[d] \\
G \times X_{L'} \ar[r] & X_{L'}. 
}  
\]
The map 
\begin{align*}
 \varphi \colon \Dv_{x,K} \backslash X_L &\to \{w \textrm{ of }L\textrm{ extending }v\} \\
\Dv_{x,K} \sigma &\mapsto w \textrm{ s.t. } \mathcal{O}_w=\sigma^{-1} ( \mathcal{O}_x \cap \sigma(L))
\end{align*}
is a bijection of sets.
Furthermore, for $\sigma \in X_L$ we have the following bijections:
\begin{align*}
  \Dv_{x,K}\sigma &\to \Hom_{K_{\hv,x}}(\sigma(L)_{\hv,x},M) \\
           t\sigma &\mapsto t|_{\sigma(L)_{\hv,x}}, 
\end{align*}
\begin{align*}
  \Iv_{x,K}\sigma &\to \Hom_{K_{\iv,x}}(\sigma(L)_{\iv,x},M) \\
           t\sigma &\mapsto t|_{\sigma(L)_{\iv,x}}, 
\end{align*}
and 
\begin{align*}
  \Vv_{x,K}\sigma &\to \Hom_{K_{\vv,x}}(\sigma(L)_{\vv,x},M) \\
           t\sigma &\mapsto t|_{\sigma(L)_{\vv,x}}.
\end{align*}
\end{proposition}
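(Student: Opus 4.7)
The commutative square is tautological, since both routes compute $(g\sigma)|_{L'}$. For the surjectivity of the right vertical map, observe that by normality of $M/K$ and Theorem \ref{1c333}, $G$ acts transitively on $X_{L'}$; given a fixed $\sigma_0 \in X_L$ and an arbitrary $\tau \in X_{L'}$, any $g \in G$ with $g\sigma_0|_{L'} = \tau$ yields $g\sigma_0 \in X_L$ restricting to $\tau$. For $\varphi$ itself, surjectivity is part of Theorem \ref{1c802}, and well-definedness is immediate: for $t \in \Dv_{x,K}$ one has $t(\mathcal{O}_x) = \mathcal{O}_x$, whence $\pi(t\sigma) = \sigma^{-1}t^{-1}(\mathcal{O}_x \cap t\sigma(L)) = \sigma^{-1}(\mathcal{O}_x \cap \sigma(L)) = \pi(\sigma)$. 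For injectivity, if $\pi(\sigma) = \pi(\sigma')$, transitivity of $G$ on $X_L$ gives $\sigma' = g\sigma$; the hypothesis unfolds to say that $x$ and $g^{-1}(x)$ both extend $x|_{\sigma(L)}$ to $M$, so Proposition \ref{1c55} produces $h \in \Aut_{\sigma(L)}(M)$ with $h(x) = g^{-1}(x)$, and then $t := gh \in \Dv_{x,K}$ satisfies $t\sigma = g\sigma = \sigma'$.

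For the three orbit-restriction bijections, the uniform ingredient is the identification $\Aut_{K_{\hv,x}}(M) = \Dv_{x,K}$ (and analogously for $\Iv_{x,K}, \Vv_{x,K}$), which follows from the very definitions $K_{\hv,x} = M_{K,\sep}^{\Dv_{x,K}}$, etc., combined with the fact that $\Aut_K(M) \to \Aut_K(M_{K,\sep})$ is an isomorphism because $M/M_{K,\sep}$ is purely inseparable. Well-definedness of $t\sigma \mapsto t|_{\sigma(L)_{\hv,x}}$: if $t_1\sigma = t_2\sigma$, then $t_2^{-1}t_1 \in \Dv_{x,K} \cap \Aut_{\sigma(L)}(M) = \Dv_{x,\sigma(L)}$ by Theorem \ref{1c333}(i), and this group fixes $\sigma(L)_{\hv,x} = K_{\hv,x}\sigma(L)$ by Theorem \ref{1c333}(ii). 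Injectivity is immediate by further restricting to $\sigma(L) \subseteq \sigma(L)_{\hv,x}$. Surjectivity: any $\tau \in \Hom_{K_{\hv,x}}(\sigma(L)_{\hv,x}, M)$ extends to an element of $\Aut_{K_{\hv,x}}(M) = \Dv_{x,K}$ because $M/K_{\hv,x}$ is algebraic and normal, so homomorphisms of intermediate fields into $M$ extend to automorphisms. The arguments for $\Iv_{x,K}$ and $\Vv_{x,K}$ are entirely analogous upon replacing $\hv$ by $\iv$ or $\vv$ throughout.

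The one step that requires genuine attention is well-definedness of the three orbit-restriction maps, where one must verify that any element of $\Dv_{x,K} \cap \Aut_{\sigma(L)}(M)$ (respectively with $\Iv$, $\Vv$) actually fixes the \emph{larger} field $K_{\hv,x}\sigma(L)$ (respectively $K_{\iv,x}\sigma(L)$, $K_{\vv,x}\sigma(L)$); this is precisely where parts (i) and (ii) of Theorem \ref{1c333} are invoked crucially, in tandem. Everything else reduces to a formal orbit-stabilizer computation combined with the standard extension theorem for automorphisms of normal algebraic extensions.
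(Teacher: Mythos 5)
Your proof is correct and follows essentially the same route as the paper: transitivity on $X_L$ plus Proposition \ref{1c55} for the bijectivity of $\varphi$, and the identification $\Aut_{K_{\hv,x}}(M)=\Dv_{x,K}$ (resp.\ $\Iv$, $\Vv$) together with the extension theorem for normal extensions for the orbit-restriction maps. You supply more of the routine verification (well-definedness, injectivity) that the paper treats as evident; the only quibble is that the transitivity of $G$ on $X_{L'}$ is a standard fact of Galois theory for normal extensions rather than a consequence of Theorem \ref{1c333}.
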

\begin{proof}
 The commutativity of the diagram is obvious. 

Define $\varphi' \colon X_L \to \{w \textrm{ of }L\textrm{ extending }v\}$ by putting $\sigma \mapsto w \textrm{ s.t. } \mathcal{O}_w=\sigma^{-1} ( \mathcal{O}_x \cap \sigma(L))$. 
One should think of $\varphi'$ as mapping an embedding $L \subseteq M$ to the restriction of $x$ to
$L$. The surjectivity is part of Theorem \ref{1c802}. Suppose
$\varphi'(s)=\varphi'(t)$. There exists $h \in G$ such that $h t=s$. But then by Proposition \ref{1c55} there exists $g \in \Aut_{s(L)}(M)$
with $gh(x)=x$, that is, $gh \in \Dv_{x,s(L)} \subseteq \Dv_{x,K}$. We have $ght=ht=s$. It is obvious that $\varphi'(\Dv_{x,K}s)=\varphi'(s)$. This
shows that the map is a bijection. 

We will show that the map $\Dv_{x,K}s \to \Hom_{K_{\hv,x}}(s(L)_{\hv,x},M)$ is a bijection. The other cases are similar. Suppose we have $\tau \in
\Hom_{K_{\hv,x}}(s(L)_{\hv,x},M)$. Then we can extend it to a morphism $\tau' \in \Aut_{K_{\hv,x}}(M)=\Dv_{x,K}$ and $\tau' \mapsto \tau$.
\end{proof}

\begin{proof}[Proof of Proposition \ref{1c4343}] \label{1cp10}
 The first statement directly follows from Proposition \ref{1c900}. The last statements follows from Proposition \ref{1c900} and Proposition
\ref{1c854} and the separability of $L/K$. 
\end{proof}

\begin{proof}[Proof of Corollary \ref{1c388}] \label{1cp11}
From Proposition \ref{1c4343} one sees that the set of valuations with the given properties is in bijection with the set of orbits of $X$ under
$\Dv_{x,K}$ such that the length of such an orbit is equal to the length of the orbit under $\Iv_{x,K}$. And this easily translates to the required
statement.
\end{proof}

\subsection{Finding extensions explicitly}

\begin{proposition} \label{1c338}
 Let $(K,v)$ be a valued field and let $L/K$ be a finite extension. Pick $a \in L$ which is integral over $\mathcal{O}_v$ with minimal polynomial $f \in \mathcal{O}_v[x]$. Suppose that $\overline{f}=\prod_{i=1}^m
f_i^{n_i} \in \kv_v[x]$ where the $f_i$ are monic irreducible and pairwise distinct. Then the following hold:
\begin{enumerate}
 \item for $i=1,\ldots,m$ there are pairwise distinct valuations $w_i$ on $L$ with $\fv(w_i/v) \geq \deg(f_i)$;
 \item if $\overline{f}$ is separable, then the $w_i$ are all valuations extending $v$ to $L$ and one has $\fv(w_i/v)=\deg(f_i)$,
$\ev(w_i/v)=\dv(w_i/v)=1$.
\end{enumerate}
\end{proposition}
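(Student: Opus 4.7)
The plan is to analyze the subring $\mathcal{O}_v[a] \subseteq L$ through its reduction modulo $\m_v$ using the Chinese Remainder Theorem, lift its maximal ideals to the integral closure to produce valuations, and for part (ii) force equality via the fundamental equality.

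For (i), I would let $R$ denote the integral closure of $\mathcal{O}_v$ in $L$, so that by Proposition \ref{1c434} the valuations on $L$ extending $v$ correspond bijectively with the maximal ideals of $R$. Since $a$ is integral over $\mathcal{O}_v$, we have $\mathcal{O}_v[a] \cong \mathcal{O}_v[x]/(f)$ sitting inside $R$, and reducing modulo $\m_v$ followed by CRT on the pairwise coprime factors yields
\[
\mathcal{O}_v[a]/\m_v\mathcal{O}_v[a] \;\cong\; \kv_v[x]/(\overline{f}) \;\cong\; \prod_{i=1}^{m} \kv_v[x]/(f_i^{n_i}).
\]
Each local component contributes a unique maximal ideal $\m_i$ of $\mathcal{O}_v[a]$ with residue field $\kv_v[x]/(f_i)$ of degree $\deg(f_i)$ over $\kv_v$. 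Lying-over (Proposition \ref{1c143} applied ideal by ideal) then lets me pick for each $i$ a maximal ideal $\m_i' \subset R$ with $\m_i' \cap \mathcal{O}_v[a] = \m_i$; the corresponding valuations $w_i$ are automatically distinct because they restrict to distinct ideals of $\mathcal{O}_v[a]$, and
\[
\fv(w_i/v) \;=\; [R/\m_i' : \kv_v] \;\geq\; [\mathcal{O}_v[a]/\m_i : \kv_v] \;=\; \deg(f_i).
\]

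For (ii), assume $\overline{f}$ is separable, so every $n_i = 1$ and $\deg(f) = \sum_i \deg(f_i)$. Under the implicit assumption $L = K(a)$ (needed to make the ``all extensions'' claim meaningful), we have $[L:K] = \deg(f)$, and I would insert the lower bounds from (i) into the fundamental equality of Theorem \ref{1c933}:
\[
[L:K] \;=\; \sum_{w|v} \dv(w/v)\ev(w/v)\fv(w/v) \;\geq\; \sum_{w|v} \ev(w/v)\fv(w/v) \;\geq\; \sum_{i=1}^{m} \deg(f_i) \;=\; [L:K].
\]
Every term is a positive integer, so forcing equality throughout simultaneously shows that the $w_i$ exhaust all extensions of $v$ to $L$, that $\ev(w_i/v) = 1$ and $\fv(w_i/v) = \deg(f_i)$, and (from the tightness of the first inequality, since each $\dv \geq 1$) that $\dv(w_i/v) = 1$.

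No serious obstacle is anticipated: part (i) is CRT together with lying-over, and part (ii) is collapsing an inequality chain between equal positive integers. The only point that requires explicit care is noting that the ``all valuations'' assertion in (ii) presupposes $L = K(a)$; without that, one only gets the corresponding statement on $K(a)$ and extends each $u_i := w_i|_{K(a)}$ to $L$ by choice.
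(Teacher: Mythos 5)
Your proposal is correct and follows essentially the same route the paper takes: identify $\mathcal{O}_v[a]$ with $\mathcal{O}_v[x]/(f)$ (the paper phrases this as $fK[x]\cap \mathcal{O}_v[x]=f\mathcal{O}_v[x]$ since $f$ is monic), apply CRT to the reduction mod $\m_v$, lift the resulting maximal ideals to the integral closure via Proposition \ref{1c143}, interpret them as valuations via Proposition \ref{1c434}, and for part (ii) collapse the inequality chain from Theorem \ref{1c933}. Your observation that part (ii) implicitly requires $L=K(a)$ is apt — the paper's statement does not spell this out, but the fundamental-equality argument needs $[L:K]=\deg f$.
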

\begin{proof}
Notice that $f K[x] \cap R[x]=f R[x]$ as $f$ is monic. Statement i follows directly from Proposition \ref{1c143} and Proposition \ref{1c434}. Statement ii
follows from i and Theorem \ref{1c933}.
\end{proof}

If the valuation in the above statement is discrete, one can say a bit more. See for example \cite[Theorem 3.3.7]{ST}. 

\section{Defects in the discrete case} 

In this section we will give examples of defects and show that under certain circumstances, defects do not occur. This section is quite different
from the other sections in this article, but we felt it was needed to show the reader that defects are not necessarily a defect of our theory.

We start with an example where there is a defect.

\begin{example}
Let $(L,w) \supseteq (K,v)$ be a finite purely inseparable extension of valued fields where $v$ is discrete, that is, $\Delta_v \cong \Z$. Then one
can have $\dv(w/v)>1$. Let $p$ be a prime number. Consider $\F_p(t) \subseteq \F_p((t))$ with the valuation $w_0$ on $\F_p((t))$ with $w_0(t)=1$.
Let $v_0$ be its restriction to $\F_p(t)$. Then we have $\Delta_{v_0}=\Delta_{w_0}$ and $\kv_{v_0}=\kv_{w_0}$. Let $s \in
\F_p((t))$ be transcendental
over $\F_p(t)$ (such $s$ exist, because $\overline{\F_p(t)}$ is countable, and $\F_p((t))$ is uncountable) and consider $K=\F_p(t,s^p) \subseteq
\F_p(t,s)=L$,
with restricted valuations $v$ respectively $w$. This is a purely inseparable extension of degree $p$ with the property that $\gv_{L,v}=\ev(w/v)=\fv(w/v)=1$.
From Proposition \ref{1c55} and Theorem \ref{1c933} we conclude $\dv(w/v)=p$.
\end{example}

We will show that in certain cases, there is no defect. We use the following lemma.

\begin{lemma} \label{1c323}
Let $k$ be a field and let $A$ be a localization at a multiplicative set of a finitely generated $k$-algebra which is a domain. Put $K=Q(A)$ and let
$L/K$ be a finite
extension of fields. Then
the integral closure $\overline{A}$ of $A$ in $L$ is finite as $A$-module.
\end{lemma}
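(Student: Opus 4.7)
The plan is to reduce to the case where $A$ is a polynomial ring $k[t_1,\ldots,t_d]$ and then to split $L/K$ into its purely inseparable part and its separable part.

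First I would reduce to $A$ being a finitely generated $k$-algebra. If $A = S^{-1}A_0$ with $A_0$ a finitely generated $k$-algebra domain and $\overline{A_0}$ its integral closure in $L$, then integral closure commutes with localization, so $\overline{A} = S^{-1}\overline{A_0}$; thus finiteness of $\overline{A_0}$ over $A_0$ implies finiteness of $\overline{A}$ over $A$. By Noether normalization, $A_0$ is module-finite over some polynomial subring $R = k[t_1,\ldots,t_d]$, and by transitivity of integrality the integral closure of $A_0$ in $L$ coincides with that of $R$ in $L$ (and any set of $R$-module generators is also a set of $A_0$-generators); so we are reduced to $A = k[t_1,\ldots,t_d]$.

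Now write $L/K$ as a tower $K \subseteq L_{K,\ins} \subseteq L$, with $L_{K,\ins}/K$ purely inseparable and $L/L_{K,\ins}$ separable. I would first handle the purely inseparable step; granting it, the integral closure $\overline{A}_1$ of $A$ in $L_{K,\ins}$ is finite over $A$, hence Noetherian, and by construction integrally closed in $L_{K,\ins} = Q(\overline{A}_1)$. The separable step then applies the classical trace-form argument over $\overline{A}_1$: choose a basis $y_1,\ldots,y_n$ of $L$ over $L_{K,\ins}$ lying in the integral closure of $\overline{A}_1$ (clear denominators) and the dual basis $y_i^*$ under the nondegenerate pairing $\tr_{L/L_{K,\ins}}$; for $z$ integral over $\overline{A}_1$ one has $\tr_{L/L_{K,\ins}}(zy_i) \in L_{K,\ins} \cap \overline{A}_1 = \overline{A}_1$ by normality, so the integral closure lies in $\sum \overline{A}_1 y_i^*$, a finite module over the Noetherian ring $\overline{A}_1$; transitivity then gives $\overline{A}$ finite over $A$.

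The main obstacle is the purely inseparable step for $A = k[t_1,\ldots,t_d]$. Assume $\chart(k) = p > 0$ and write $L = K(\alpha_1,\ldots,\alpha_m)$ with $\alpha_j^{p^e} \in K$ for a common $e$. Let $C \subseteq k$ be the finite set of coefficients appearing in the numerators and denominators of the $\alpha_j^{p^e}$, and set $k' = k(\{c^{1/p^e} : c \in C\})$, a finite extension of $k$. Then $A' := k'[t_1^{1/p^e},\ldots,t_d^{1/p^e}]$ is a polynomial ring (hence integrally closed) and is module-finite over $A$, being generated as an $A$-module by the finite set of products of a $k$-basis of $k'$ with monomials $\prod_i t_i^{j_i/p^e}$ for $0 \leq j_i < p^e$. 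Moreover each $\alpha_j$ lies in $Q(A')$: if $\alpha_j^{p^e} = f(t)/g(t)$ with $f,g \in k(C)[t_1,\ldots,t_d]$, then replacing each coefficient $c$ by $c^{1/p^e} \in k'$ and each monomial $t^\beta$ by $t^{\beta/p^e}$ produces a $p^e$-th root of $f(t)/g(t)$ in $Q(A')$, and this must equal $\alpha_j$ because $x \mapsto x^{p^e}$ is injective in characteristic $p$. Hence $L \subseteq Q(A')$, giving $\overline{A} \subseteq A'$; since $A'$ is finitely generated as an $A$-module and $A$ is Noetherian, $\overline{A}$ is finite over $A$, completing the proof.
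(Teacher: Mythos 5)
Your argument follows the same overall route as the paper's (reduce to a finitely generated algebra, Noether-normalize to a polynomial ring, split off a purely inseparable piece and finish the separable piece by the trace-form argument), and your treatment of the purely inseparable case over $k[t_1,\ldots,t_d]$ is correct and in fact spells out more carefully what the paper leaves implicit.

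However, there is a genuine gap at the point where you write $L/K$ as the tower $K \subseteq L_{K,\ins} \subseteq L$ and assert that $L/L_{K,\ins}$ is separable. For a general finite extension this is false: $L/L_{K,\ins}$ is separable precisely when $L/K$ is ``separably disjoint'' in the sense of the paper (Lemma~\ref{1c902010}), and not every finite extension has this property. (For instance, over $K=\F_p(x,y)$ the field $L=K(\alpha)$ with $\alpha^{p^2}+x\alpha^p+y=0$ has $L_{K,\sep}=K(\alpha^p)$ of degree $p$ and $L_{K,\ins}=K$, so $L/L_{K,\ins}=L/K$ is not separable.) The decomposition that is always available goes the other way, $K \subseteq L_{K,\sep} \subseteq L$, but that order is useless for the trace argument since the purely inseparable step must come first.

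The missing idea, which the paper supplies explicitly, is that one is free to enlarge $L$: since $A$ (after the Noether reduction) is Noetherian, a submodule of a finite $A$-module is again finite, so it suffices to prove the claim after replacing $L$ by a finite normal closure $L'/K$. A normal extension is separably disjoint (Proposition~\ref{1c777}), so over $L'$ the desired tower $K \subseteq L'_{K,\ins} \subseteq L'$ with $L'/L'_{K,\ins}$ separable does exist, and the rest of your argument goes through verbatim. With that one extra reduction inserted before the tower decomposition, your proof is complete and essentially coincides with the paper's.
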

\begin{proof}
Assume first that $A$ is finitely generated as $k$-algebra.

Notice that it is enough to prove the statement for a finite extension of $L$.  Indeed, a
finitely generated module over a noetherian ring is a noetherian module (\cite[Proposition 6.5]{AT}), and hence all submodules are finitely
generated. 

Noether normalization, \cite[Proposition 2.1.9]{LIU}, tells us that $A$ is finite over a polynomial ring $A'=k[x_1,\ldots,x_n]$ with quotient field
$K'$. We show that the integral closure of $A'$ in $L$, which is $\overline{A}$, is a finite $A'$-module and hence a finite $A$-module.
This reduces to the case where $A=k[x_1,\ldots,x_n]$. 

We will start enlarging $L$. First enlarge it such that $L/K$ is normal. We can split $L/K$ into a tower $L \supseteq L' \supseteq K$ where $L'/K$ is
purely inseparable and $L/L'$ is separable. Hence we are reduced to proving the following two cases:
\begin{enumerate}
\item $L/K$ separable;
\item $L/K$ purely inseparable and $A=k[x_1,\ldots,x_n]$.
\end{enumerate}

Assume that $L/K$ is separable. Let $y_1,\ldots,y_m$ be a basis of $L/K$ with $y_i \in \overline{A}$. Let $y_1',\ldots,y_m'$ be a dual basis of $L/K$
with respect to the trace. Then it follows that $Ay_1 \oplus \ldots \oplus Ay_n \subseteq \overline{A} \subseteq Ay_1' \oplus \ldots \oplus Ay_n'$.
Note that $Ay_1' \oplus \ldots \oplus Ay_n'$ is a finitely generated module over a noetherian ring, and hence a finite $A$-module. It follows that
$\overline{A}$ is a finite $A$-module.

Assume that $L/K$ is purely inseparable and $A=k[x_1,\ldots,x_n]$. Since $L/K$ is finite, we see that $L$ is contained in $L'=l(x_1^{-p^d},\ldots,x_n^{-p^d})$ for some $d \in
\Z_{\geq 0}$ large enough and $l$ a finite (purely inseparable) extension of $k$. Replace $L$ by $L'$. Notice that
$A'=l[x_1^{-p^d},\ldots,x_n^{-p^d}]$ is integral over $A$ and it is integrally closed. Hence the integral closure of $A$ in $L$ is $A'$, and it is
finite over $A$. 

We will now treat the general case. Write $A=S^{-1}B$ where $B$ is a finitely generated $k$-algebra and $S$ a multiplicative set. From \cite[Proposition 5.12]{AT} we obtain $\overline{A}=S^{-1} \overline{B}$. We have shown that $B$ is a finite $A$-module and hence $\overline{A}$ is a
finite $A$-module.
\end{proof}

\begin{proposition} \label{1c888}
Let $(L,w) \supseteq (K,v)$ be a finite extension of valued fields. Suppose one of the following hold:
\begin{enumerate}
\item $L/K$ is separable and $\Delta_v \cong \Z$;
\item $\mathcal{O}_v$ contains a field $k$, $\mathcal{O}_v \neq K$, $K$ finitely generated over $k$, and $\trdeg_k(K)=1$.
\end{enumerate}
Then we have $\Delta_v \cong \Z$ and $\dv(w/v)=1$.
\end{proposition}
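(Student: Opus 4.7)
The overall strategy is to reduce both cases to a single scheme: show that $\mathcal{O}_v$ is a DVR and that the integral closure $R$ of $\mathcal{O}_v$ in $L$ is a finitely generated $\mathcal{O}_v$-module. Granting these, $R$ is torsion-free and finitely generated over the PID $\mathcal{O}_v$, hence free of rank $n = [L:K]$. For a uniformizer $\pi$ of $\mathcal{O}_v$, on the one hand $\dim_{\kv_v}(R/\pi R) = n$; on the other hand, $R$ is a one-dimensional noetherian normal domain, hence Dedekind, whose maximal ideals correspond by Proposition \ref{1c434} to the extensions $w_1,\ldots,w_g$ of $v$ to $L$, so the primary decomposition of $\pi R$ yields $\dim_{\kv_v}(R/\pi R) = \sum_i \ev(w_i/v)\fv(w_i/v)$. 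Comparing gives $[L:K] = \sum_i \ev(w_i/v)\fv(w_i/v)$, and together with the inequality $[L:K] \geq \sum_i \ev(w_i/v)\fv(w_i/v)$ from Theorem \ref{1c933}, this forces $\dv(w/v) = 1$ for every extension $w$.

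In case (i), $\mathcal{O}_v$ is a DVR by hypothesis, and the finiteness of $R$ over $\mathcal{O}_v$ is the classical trace-pairing argument: take an $\mathcal{O}_v$-basis of $L$ lying in $R$, and sandwich $R$ between this basis and its dual with respect to the trace form, which is nondegenerate since $L/K$ is separable. The conclusion then follows from the scheme above.

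For case (ii), I first establish $\Delta_v \cong \Z$. Pick $x \in K \setminus \mathcal{O}_v$, which exists since $\mathcal{O}_v \neq K$. Because $\mathcal{O}_v$ is integrally closed (Lemma \ref{1c944}) and contains $k$, every element algebraic over $k$ is a unit of $\mathcal{O}_v$, so $x$ must be transcendental over $k$. Then $v|_{k(x)}$ is a nontrivial valuation of $k(x)/k$, whose value group is $\Z$ by the classical classification of valuations of $k(x)$ trivial on $k$. Since $K/k(x)$ is finite, Proposition \ref{1c55} gives $[\Delta_v : \Delta_{v|_{k(x)}}] \leq [K:k(x)]$, and combining with torsion-freeness (Lemma \ref{1c60}) forces $\Delta_v \cong \Z$, so $\mathcal{O}_v$ is a DVR. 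To apply Lemma \ref{1c323} I must realize $\mathcal{O}_v$ as a localization of a finitely generated $k$-algebra. Set $A = k[x^{-1}] \subseteq \mathcal{O}_v$ (note $x^{-1} \in \m_v$), let $B$ be the integral closure of $A$ in $K$ (a finitely generated $A$-module by Lemma \ref{1c323}, contained in $\mathcal{O}_v$ by integral closedness), and let $\mathfrak{p} = \m_v \cap B$. Then $B_\mathfrak{p}$ is a DVR of $K$ dominated by the DVR $\mathcal{O}_v$, and two such DVRs of a common field must coincide, so $\mathcal{O}_v = B_\mathfrak{p}$. Lemma \ref{1c323} then gives finiteness of $R$ over $\mathcal{O}_v$, and the first paragraph concludes the proof.

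The main obstacle I anticipate is the case (ii) realization of $\mathcal{O}_v$ as a localization $B_\mathfrak{p}$ of a finitely generated $k$-algebra, together with the cleanness of the step $B_\mathfrak{p} = \mathcal{O}_v$ via domination of DVRs; this is essentially the classical fact that a nontrivial valuation ring of a function field of one variable is the local ring at a closed point of some affine model. Once both hypotheses --- $\mathcal{O}_v$ a DVR and $R$ finite over $\mathcal{O}_v$ --- are in place, the conclusion is a uniform dimension count over $\mathcal{O}_v$ compared with Theorem \ref{1c933}.
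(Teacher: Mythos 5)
Your proof is correct and follows essentially the same scheme as the paper: establish that $\mathcal{O}_v$ is a DVR and that the integral closure $R$ of $\mathcal{O}_v$ in $L$ is a finite $\mathcal{O}_v$-module, then deduce $\dim_{\kv_v}(R/\pi R) = [L:K] = \sum_w \ev(w/v)\fv(w/v)$ by a free-rank count on one side and a local/Dedekind decomposition on the other, comparing with Theorem \ref{1c933}. Case (i) (trace pairing) is identical. The one place where you deviate from the paper is the setup in case (ii): where the paper observes that $\mathcal{O}_v \cap k(x) = k[x]_{\pa}$ is manifestly a localization of a polynomial ring, replaces the base $(K,v)$ by $(k(x), v|_{k(x)})$, applies Lemma \ref{1c323} to the bigger extension $L/k(x)$, and recovers the result for $L/K$ by multiplicativity of the defect, you instead realize $\mathcal{O}_v$ itself as a localization $B_{\pa}$ of a finitely generated $k$-algebra ($B$ the integral closure of $k[x^{-1}]$ in $K$) via the maximality of valuation rings under domination (Proposition \ref{1c143}). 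Your route is a bit more direct and avoids invoking multiplicativity of $\dv$, at the modest cost of the domination argument for $\mathcal{O}_v = B_{\pa}$; the paper's route is a touch shorter because $k[x]_{\pa}$ is immediately in the form required by Lemma \ref{1c323}. Both are valid, and the extra step you take (that $\mathcal{O}_v$ is the local ring of a closed point of an affine model) is itself a useful classical fact.
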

\begin{proof}

First we prove that in the second case we also have $\Delta_v \cong \Z$. Let $x \in \mathcal{O}_v$ transcendental over $k$.  Then $\pa=k[x] \cap \m_v$
is a prime ideal. If it is zero, then $\mathcal{O}_v \supseteq k(x)$ and since $K$ is finite over $k(x)$, it follows that $\mathcal{O}_v=K$,
contradiction. Hence $\mathcal{O}_v \cap k(x)=k[x]_{\pa}$ (this follows since we know all valuations on $k(x)$ which are trivial on $k$). Notice that $k[x]_{\pa}$ is a discrete valuation ring, and hence the
same follows for $\mathcal{O}_v$ (as $\ev$ is finite). Replace $\mathcal{O}_v$ by $k[x]_{\pa}$ and $K$ by $k(x)$ in this case. We will show the
statement about $\dv(w/v)$ for the bigger extension, and the result about $\dv(w/v)$ follows from multiplicativity. 

Now we will consider both cases at once. Let $\mathcal{O}$ be the integral closure of $\mathcal{O}_v$ in $L$. Assume first that $\mathcal{O}$ is a
finitely generated $\mathcal{O}_v$-module. 
Then one easily sees that $\mathcal{O}$ is a free $\mathcal{O}_v$-module of rank $[L:K]$, since $\mathcal{O}_v$ is a discrete valuation ring. Consider
$\mathcal{O}/\m_v \mathcal{O}$, which is isomorphic to $\prod_{w|v} \mathcal{O}_w/\m_v \mathcal{O}_w$ (Proposition \ref{1c434}, in combination with
theorems on artinian rings from \cite{AT}). Notice that $\mathcal{O}_w/\m_v \mathcal{O}_w$ is a vector space over $\kv_v$ of dimension
$\ev(w/v)\fv(w/v)$ and
the result follows.

Hence we are finished if we can show that $\mathcal{O}$ is finite over $\mathcal{O}_v$. In the first case, this follows directly from the trace
pairing. In the second case, use Lemma \ref{1c323}. 
\end{proof}

We will finish this section by giving an example of a separable extension of valued fields which has a defect. We start with the following lemma,
which goes back to \cite{SCH}. We follow a proof from \cite{STE2}.

\begin{lemma} \label{1c755}
Let $K$ be a field and $n \in \Z_{\geq 1}$ be an integer coprime with $\chart{K}$. Let $w$ be the number of $n$-th roots of unity in $K$. Let $L$ be
the splitting field of $x^n-a \in K[x]$. Then one has: $L/K$ is abelian iff $a^w \in K^n$. 
\end{lemma}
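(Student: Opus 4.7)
The plan is to parametrize $G = \Gal(L/K)$ concretely. Fix a primitive $n$-th root of unity $\zeta \in L$ and a root $\alpha \in L$ of $X^n - a$, so $L = K(\zeta, \alpha)$. Every $\sigma \in G$ is determined by $s(\sigma) \in (\Z/n\Z)^{*}$ and $t(\sigma) \in \Z/n\Z$ via $\sigma(\zeta) = \zeta^{s(\sigma)}$ and $\sigma(\alpha) = \zeta^{t(\sigma)} \alpha$; this embeds $G$ into the semidirect product $\mu_n \rtimes (\Z/n\Z)^{*}$ with multiplication $(t_1, s_1)(t_2, s_2) = (t_1 + s_1 t_2,\, s_1 s_2)$. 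Because $\mu_w \subseteq K$, each $\sigma$ fixes $\zeta^{n/w}$, forcing $s(\sigma) \equiv 1 \pmod{w}$; write $s(\sigma) = 1 + w u(\sigma)$. A direct commutator computation in the semidirect product shows that $G$ is abelian if and only if
\[
u(\sigma_1) t(\sigma_2) \equiv u(\sigma_2) t(\sigma_1) \pmod{n/w} \qquad \text{for all } \sigma_1, \sigma_2 \in G. \qquad (\ast)
\]

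For the ($\Leftarrow$) direction, suppose $a^w = c^n$ with $c \in K^{*}$. Then $\alpha^w / c \in \mu_n$, so $\alpha^w = c\zeta^j$ for some $j \in \Z$. Applying any $\sigma \in G$ and comparing with $\sigma(\alpha^w) = \zeta^{w t(\sigma)} \alpha^w$ yields $w t(\sigma) \equiv j(s(\sigma) - 1) \pmod{n}$, i.e.\ $t(\sigma) \equiv j u(\sigma) \pmod{n/w}$. Substituting this into both sides of $(\ast)$ produces the common value $j u(\sigma_1) u(\sigma_2)$, so $(\ast)$ holds and $G$ is abelian.

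For the ($\Rightarrow$) direction, assume $G$ is abelian, so $(\ast)$ holds. Set $M = K(\zeta)$ and $d = [L : M]$; by Kummer theory $L = M(\beta^{1/d})$ with $\beta = \alpha^d \in M$. Computing in $(s, t)$-coordinates the conjugation of a generator of $\Gal(L/M)$ by a lift of an arbitrary element of $\Gal(M/K)$, abelianness forces $s(\sigma) \equiv 1 \pmod{d}$ for every $\sigma \in G$, whence $\zeta_d \in K$ and $d \mid w$. The heart of the matter is then to produce a single integer $j$ with $t(\sigma) \equiv j u(\sigma) \pmod{n/w}$ for every $\sigma \in G$; this is done by solving the congruence on a finite generating set of $G$ and invoking $(\ast)$ together with $d \mid w$ to check that the local solutions are mutually compatible and solvable. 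Given such a $j$, set $c = \zeta^{-j} \alpha^w \in L$. A short calculation using $s(\sigma) = 1 + w u(\sigma)$ gives
\[
\sigma(c) = \zeta^{w t(\sigma) - j s(\sigma)} \alpha^w = \zeta^{-j + w(t(\sigma) - j u(\sigma))} \alpha^w = \zeta^{-j} \alpha^w = c,
\]
the last equality by $t(\sigma) \equiv j u(\sigma) \pmod{n/w}$. Hence $c \in L^G = K$ and $c^n = \alpha^{w n} = a^w$, establishing $a^w \in (K^{*})^n$.

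The main obstacle is the construction of the global $j$. The identity $(\ast)$ is not by itself enough to force collinearity through the origin of the pairs $(u(\sigma), t(\sigma))$ in $(\Z/(n/w))^2$ — for example $(2,0)$ and $(0,2)$ in $\Z/4$ pass the vanishing-determinant test yet are not simultaneous scalar multiples of any single vector — so the argument must genuinely combine $(\ast)$, the cocycle identity $t(\sigma\tau) = t(\sigma) + s(\sigma)t(\tau)$, and the Kummer bound $d \mid w$. A cohomological reformulation makes the same point: $\sigma \mapsto \zeta^{w t(\sigma)}$ represents the class of $a^w$ in $H^1(G, \mu_n)$, which corresponds under Hilbert 90 and Kummer theory to the image of $[a^w]$ in $K^{*}/(K^{*})^n$, and one must show that abelianness together with $\mu_w \subseteq K$ forces this class to vanish.
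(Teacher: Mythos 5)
Your $\Leftarrow$ direction is correct and complete: from $a^w=c^n$ you derive $t(\sigma)\equiv j\,u(\sigma)\pmod{n/w}$ and verify $(\ast)$ directly. (The paper's $\Leftarrow$ is slightly different and arguably cleaner: it embeds $L$ in $L'=K(b^{1/w},\zeta_{nw})$ and observes that $L'/K$ is abelian because $\mu_w\subseteq K$, so no commutator computation in $L$ itself is needed.)

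The $\Rightarrow$ direction, however, has a genuine gap, and it is exactly the step you yourself flag as ``the heart of the matter'': producing a single $j$ with $t(\sigma)\equiv j\,u(\sigma)\pmod{n/w}$ for all $\sigma\in G$. You correctly point out, with the example $(2,0)$ and $(0,2)$ in $(\Z/4\Z)^2$, that condition $(\ast)$ alone cannot force the pairs $(u(\sigma),t(\sigma))$ to be scalar multiples of a common vector, and you then assert that combining $(\ast)$ with the cocycle identity $t(\sigma\tau)=t(\sigma)+s(\sigma)t(\tau)$ and the Kummer bound $d\mid w$ ``is done,'' but no such argument is exhibited. As written, the proof does not establish $a^w\in K^n$; the cohomological reformulation at the end restates the goal rather than proving it.

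The paper's $\Rightarrow$ direction sidesteps the global-$j$ difficulty entirely. Writing $\sigma(\zeta_n)=\zeta_n^{k(\sigma)}$, one computes for any $\tau$ that
\[
\frac{\tau\sigma(\alpha)}{\sigma(\alpha)}
=\sigma\!\left(\frac{\tau(\alpha)}{\alpha}\right)
=\left(\frac{\tau(\alpha)}{\alpha}\right)^{k(\sigma)}
=\frac{\tau(\alpha^{k(\sigma)})}{\alpha^{k(\sigma)}},
\]
using abelianness at the first equality. Hence $\alpha^{k(\sigma)}/\sigma(\alpha)$ is fixed by every $\tau$, so lies in $K$, and its $n$-th power is $a^{k(\sigma)-1}\in K^n$. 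By B\'ezout, $a^{r}\in K^n$ where $r=\gcd\bigl(n,\{k(\sigma)-1:\sigma\in G\}\bigr)$, and counting $G$-invariant $n$-th roots of unity gives $r=w$. In your notation $k(\sigma)-1=w\,u(\sigma)$, so the paper effectively shows $a^{wu(\sigma)}\in K^n$ for each $\sigma$ and then takes a gcd, rather than trying to extract a single linearizing scalar $j$. If you want to repair your argument along your own lines, importing this observation is the quickest route: it converts the problem from a simultaneous-collinearity statement (which $(\ast)$ does not give) into a gcd statement (which it does).
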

\begin{proof}
We may assume $a \neq 0$.

$\implies$: Fix $\alpha \in L$ with $\alpha^n=a$ and let $\zeta_n$ be an $n$-th root of unity. Let $\sigma \in G=\Gal(L/K)$. Write
$\sigma(\zeta_n)=\zeta_n^{k(\sigma)}$. For $\tau \in G$ one has
\begin{align*}
 \frac{\tau \sigma (\alpha)}{\sigma(\alpha)} =  \frac{\sigma \tau (\alpha)}{\sigma(\alpha)} = \sigma \left( \frac{\tau(\alpha)}{\alpha} \right)=
\left( \frac{\tau(\alpha)}{\alpha} \right)^{k(\sigma)} = \frac{\tau(\alpha^{k(\sigma)})}{\alpha^{k(\sigma)}}. 
\end{align*}
Hence $\alpha^{k(\sigma)}/\sigma(\alpha)$ is fixed by $\tau$ and hence lies in $K$. Its $n$-th power is $a^{k(\sigma)-1} \in K^n$. Let
$r$ be the greatest common divisor of $n$ and $k(\sigma)-1$ for $\sigma \in G$. Then we have $a^r \in K^n$. As $\langle \zeta_n^{n/r} \rangle$ is the set of
$G$-invariant $n$-th
roots of unity, one has $r=w$.

$\limplies$: Suppose $a^w=b^n$ for some $b \in K$. One has $K \subseteq L \subseteq L'=K(b^{1/w},\zeta_{nw})$. Notice that $L'/K$ is abelian and hence
$L/K$ is abelian.
\end{proof}

\begin{remark}
Next we will give an example of a separable extension which has a defect. Let $p$ be a prime and consider the field $\Q_p$ with its standard p-adic valuation. It is well-known that this valuation has a unique extension to each algebraic extension ($\Q_p$ is \emph{henselian}). Let $L$ be
the maximal tamely ramified extension of $\Q_p$. Put $L'=L(\zeta_{p^i}: i \in \Z_{\geq 1})$. We claim that for any finite extension $L''/L'$ we have
$\dv(L''/L')=[L'':L']$ (we do not specify the valuations, since they are unique). Indeed, from the construction one easily sees that $\ev=\fv=1$ (the
residue field of $L$ is already algebraically closed, and the value group of $L'$ is $\Q$) and as the extension is unique, the degree is equal to the
defect. We will now find a non-trivial extension $L''/L'$. We claim that $\sqrt[p^2]{p} \not
\in L'$. Suppose $\sqrt[p^2]{p} \in L'$, then
$L(\sqrt[p^2]{p})/L$ is an abelian Galois extension. Note that $\#\{x \in L: x^{p^2}=1\}=p$, as $\Q_p(\zeta_{p^2})/\Q_p$ is wild. 
Lemma \ref{1c755} gives us $p^p \in L^{p^2}$. But this means that $L/\Q_p$ is not tame, contradiction. Hence we can take $L''=L'(\sqrt[p^2]{p})$.
\end{remark}

\end{document}